\numberwithin{equation}{section}
\title{On integer solutions of Parsell-Vinogradov systems}
\author{Shaoming Guo and Ruixiang Zhang}
\date{\today}
\def\nint{\mathop{\diagup\kern-13.0pt\int}}
\def\R{\mathbb{R}}
\def\N{\mathbb{N}}
\def\C{\mathbb{C}}
\def\Z{\mathbb{Z}}
\def\lesim{\lesssim}
\def\beq{\begin{equation}}
\def\endeq{\end{equation}}
\def\bg{\begin{gathered}}
\def\eg{\end{gathered}}
\def\ek{E^{(d, k)}}
\def\83{\frac{8}{3}}
\def\38{\frac{3}{8}}
\def\mc{\mathcal}
\theoremstyle{plain}
\newtheorem{thm}{Theorem}[section]
\newtheorem{prop}[thm]{Proposition}
\newtheorem{lem}[thm]{Lemma}
\newtheorem{defi}[thm]{Definition}
\newtheorem{claim}[thm]{Claim}
\newtheorem{rem}{Remark}[section]
\newtheorem*{conj*}{Conjecture}
\newtheorem*{openproblem*}{Open Problem}
\begin{document}
\maketitle

\begin{abstract}
We prove a sharp upper bound on the number of integer solutions of the Parsell-Vinogradov system in every dimension $d\ge 2$.
\end{abstract}

\let\thefootnote\relax\footnote{AMS subject classification: Primary 11L07; Secondary 42A45}

\tableofcontents

\section{\bf Introduction and statement of main results}

Fix $d, s\ge 1$ and $k\ge 2$. We use $\bf{x}$ to denote the vector $(x_1, \dots, x_d)\in \R^d$, and $\bf{i}$ to denote the $d$-tuple $(i_1, \dots, i_d)$ of non-negative integers. The monomial $x_1^{i_1}\dots x_d^{i_d}$ will be abbreviated to $\bf{x}^{\bf{i}}$. Consider the integer solutions
\beq\label{2305f1.5}
\bf{x}_1, \bf{x}_2, \dots, \bf{x}_s, \bf{y}_1, \bf{y}_2, \dots, \bf{y}_s
\endeq
of the Parsell-Vinogradov system of Diophantine equations
\beq\label{2305e1.5}
\bf{x}_1^{\bf{i}}+\dots +\bf{x}_s^{\bf{i}}=\bf{y}_1^{\bf{i}}+\dots+\bf{y}_s^{\bf{i}}.
\endeq
Here $0\le i_1, i_2, ..., i_d\le k$ range through all possible integers such that $1\le i_1+i_2+...+i_d\le k$. Moreover, $d$ refers to the dimension of this system, and $k$ refers to its degree. For instance, when $d=1$, the system \eqref{2305e1.5} consists of the following $k$ equations
\beq
x_1^i+\dots+x_s^i=y_1^i+\dots+y_s^i, \text{ with } 1\le i\le k,
\endeq
known as the classical Vinogradov system.\\

For a large constant $N$, we let $J_{s, d, k}(N)$ denote the number of integer solutions \eqref{2305f1.5} of the system of equations \eqref{2305e1.5} with $1\le x_{1, j}, ..., x_{d, j}, y_{1, j}, ..., y_{d, j}\le N$ for each $1\le j\le s$. Denote
\beq
\mc{K}_{j, k}=\frac{j\cdot k}{j+1} \binom{k+j}{j}.
\endeq
We prove
\begin{thm}\label{main123}
For every $d\ge 2$, $s\ge 1$ and $k\ge 2$, we have an upper bound
\beq\label{sharp-ps}
J_{s, d, k}(N)\lesim_{d, k, s, \epsilon} N^{sd+\epsilon}+\sum_{j=1}^d N^{(2s-1)j+d-\mc{K}_{j, k}+\epsilon},
\endeq
for every integer $N$ and every $\epsilon>0$, with an implicit constant depending on all the parameters $d, k, s$ and $\epsilon$.
\end{thm}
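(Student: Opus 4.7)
The plan is to recast $J_{s,d,k}(N)$ as an $L^{2s}$-norm of an exponential sum. Orthogonality gives
\[
J_{s,d,k}(N) = \int_{[0,1]^D} |f_N(\alpha)|^{2s}\, d\alpha, \qquad f_N(\alpha) := \sum_{\mathbf{x}\in [1,N]^d\cap \Z^d} e\!\Bigl(\sum_{1\le |\mathbf{i}|\le k}\alpha_{\mathbf{i}}\,\mathbf{x}^{\mathbf{i}}\Bigr),
\]
where $D=\binom{k+d}{d}-1$ is the number of monomials with $1\le|\mathbf{i}|\le k$. Rescaling $\mathbf{x}=N\mathbf{u}$ and dualising, the problem is equivalent to an $L^{2s}$-bound for an extension operator attached to the Parsell--Vinogradov graph $\Phi\colon [0,1]^d\to \R^D$, $\Phi(\mathbf{u})=(\mathbf{u}^{\mathbf{i}})_{1\le|\mathbf{i}|\le k}$.

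The core input will be a sharp $\ell^2$-decoupling inequality for $\Phi([0,1]^d)$ at the critical exponent $p_c=2\mc{K}_{d, k}/d$ (which specializes to the Bourgain--Demeter--Guth exponent $k(k+1)$ when $d=1$). Partitioning $[0,1]^d$ into $N^{-1}$-cubes $\tau$ and decoupling $f_N$ in $L^{p_c}$ turns $\|f_N\|_{p_c}^{p_c}$ into an $\ell^2$-sum over the $f_\tau$, each of which is supported at a single lattice point and hence controlled trivially in $L^\infty$. Interpolating the resulting bound with the Plancherel/$L^2$ identity (whose counting is just the trivial diagonal) yields simultaneously the diagonal term $N^{sd+\epsilon}$ and the $j=d$ contribution $N^{(2s-1)d+d-\mc{K}_{d, k}+\epsilon}$ to the theorem.

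To extract the remaining terms $N^{(2s-1)j+d-\mc{K}_{j, k}+\epsilon}$ for $1\le j<d$ one must isolate configurations in which the solution vectors lie within $O(1)$ of some $j$-dimensional affine subspace of $\R^d$. A broad/narrow decomposition on the frequency side effects this: in the broad regime, where $d+1$ chosen tangent spaces to $\Phi$ are transverse, a Brascamp--Lieb/multilinear Kakeya estimate drives the decoupling iteration; in the narrow regime, the contribution concentrates near some $j$-plane and, after summing over the $\sim N^{d-j}$ lattice translates of such a plane (which accounts for the $+d$ in the exponent), is bounded by the sharp $j$-dimensional Parsell--Vinogradov estimate, supplied by induction on $d$ and contributing exactly the exponent involving $\mc{K}_{j, k}$.

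The main obstacle will be the $\ell^2$-decoupling step itself when $d\ge 2$. Two difficulties must be overcome. First, one must verify the Brascamp--Lieb nondegeneracy condition for all relevant families of tangent spaces to $\Phi$; this is an algebraic input whose computation is precisely what forces the appearance of $\mc{K}_{j, k}$, since $\mc{K}_{j, k}=\frac{jk}{j+1}\binom{k+j}{j}$ equals the total degree of the monomials in $j$ variables of degrees $1,\dots,k$. Second, one must arrange an efficient bootstrap in which parabolic rescaling together with the inductively available sharp lower-dimensional Parsell--Vinogradov bounds at scale $N^{1/2}$ feed the iteration without accumulating losses beyond $N^\epsilon$ over the $O(\log N)$ steps; closing this scheme at the sharp critical exponent, rather than at some strictly larger one, is what makes the argument delicate.
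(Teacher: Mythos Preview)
Your overall architecture---exponential-sum reformulation, decoupling for the extension operator of $\Phi_{d,k}$, and a broad/narrow dichotomy---matches the paper in spirit, but there is a genuine gap at the core input.

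You take as ``core input'' a sharp $\ell^2$-decoupling inequality at the single exponent $p_c=2\mc{K}_{d,k}/d$, and plan to interpolate with $L^2$ and feed in lower-$d$ counting inductively. For $d\ge 2$ this does not close. The paper explicitly notes that sharp $\ell^2 L^p$ decoupling for $\mc{S}_{d,k}$ is \emph{not} strong enough to recover the sharp mean-value bound when $d>1$ (see the discussion around the definition of $D_t(q,B^r)$ in Section~\ref{section-large-p}), and the correct object is the $\ell^p L^p$ decoupling constant $V^{(d,k)}(\delta,p)$ with exponent $\Gamma_{d,k}(p)$. Relatedly, for $d\ge 3$ there are roughly $d/2$ genuine critical exponents, not one: proving a sharp bound only at $p_c$ and interpolating with $p=2,\infty$ cannot reproduce the piecewise-linear profile $\Gamma_{d,k}(p)$, which is exactly what encodes all of the intermediate terms $N^{(2s-1)j+d-\mc{K}_{j,k}}$. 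The paper therefore proves Theorem~\ref{mainthm} at every $p\ge 2$ separately.

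Your mechanism for the intermediate $j$-terms is also misattributed. In the paper these terms do not come from isolating solutions near $j$-planes at the arithmetic level; they are already built into $\Gamma_{d,k}(p)=\max\{(\tfrac12-\tfrac1p)d,\ \max_{1\le j\le d}[(1-\tfrac1p)j-\mc{K}_{j,k}/p]\}$ at the decoupling level. The $j<d$ pieces enter through the Bourgain--Guth step (Proposition~\ref{2903p3.2} and Claim~\ref{2903claim3.3}), where the narrow case---caps hugging a polynomial zero set---is decoupled using the already-established $(d-1)$-dimensional inequality, contributing $\Gamma_{d-1,k}(p)$; iterating this induction on $d$ is what generates the full max over $j$. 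The broad case is handled not by a single Brascamp--Lieb/multilinear Kakeya application but by the full ball-inflation iteration of Section~\ref{section-large-p}, which mixes in lower-\emph{degree} decoupling for $\mc{S}_{d,l}$, $l<k$, and requires the linear-algebra Theorem~\ref{linear-algebra} to verify the BL transversality condition for every $1\le l\le k-1$.

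In short: replace the $\ell^2$-decoupling input by the $\ell^p L^p$ estimate $V^{(d,k)}(\delta,p)\lesssim_\epsilon\delta^{-\Gamma_{d,k}(p)-\epsilon}$, and note that once this is established for all $p$, the standard passage (as on p.~638 of Bourgain--Demeter--Guth) gives Theorem~\ref{main123} directly---no separate induction on $d$ at the counting level is needed.
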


The upper bound \eqref{sharp-ps} is sharp up to $N^{\epsilon}$. Parsell, Prendiville and Wooley \cite{Par} obtained the lower bound 
\beq
J_{s, d, k}(N)\gtrsim_{d, k, s} N^{sd}+\sum_{j=1}^d N^{(2s-1)j+d-\mc{K}_{j, k}},
\endeq
for every $d, s$ and $k$, which is also conjectured to be an upper bound. When $d=1$, the conjecture was resolved, up to $N^{\epsilon}$, by Wooley \cite{Woo1} and Bourgain, Demeter and Guth \cite{BDG} (see also Wooley \cite{Woo3}). Moreover, Wooley \cite{Woo12}, \cite{Woo13}, \cite{Woo2} and Ford and Wooley \cite{FW14} have also recorded significant partial progress towards the final resolution of the problem in dimension one. 

In the present paper we provide an affirmative answer to the conjecture of \cite{Par} in every dimension $d\ge 2$. A few special cases in dimension $d=2$ are previously known: The case $d=2, k=2$ was solved by Bourgain and Demeter \cite{BD2}, and the case $d=2, k=3$ by Bourgain, Demeter and the first author \cite{BDG-2}. Moreover, in a general dimension $d$, bounds \eqref{sharp-ps} have also been obtained by Parsell, Prendiville and Wooley \cite{Par} for ``large" $s$, improving earlier results due to Parsell \cite{Par00} and \cite{Par1} and a related result due to  Arhipov, Karacuba and Cubarikov \cite{A}. We refer to Theorem 1.1 in \cite{Par} for the precise statement. Indeed, the authors of \cite{Par} handled a much more general class of systems, the translation-invariant systems, in their paper. \\

The quantity $J_{s, d, k}(N)$ with $d=1$ has been extensively studied, partly because of its close connections to Waring's problem \cite{Woo92} and to the Riemann-Zeta function \cite{Ford02}. 

The investigation of the quantities $J_{s, d, k}(N)$ for $d\ge 2$ was initiated by Parsell in \cite{Par1}. This paper also explains some of the motivation behind considering such quantities. For instance, one motivation comes from counting rational linear subspaces of a given dimension lying on the hyper-surface  defined by 
\beq
c_1 z_1^k+\dots+c_s z_s^k=0,
\endeq
for given $c_1, \dots, c_s\in \Z$. In order to apply the Hardy-Littlewood circle method, one needs a good upper bound for $J_{s, d, k}(N)$.

A second motivation, which is akin to Waring's problem and already appeared in \cite{A}, is from representing homogeneous polynomials of multiple variables by sums of linear forms raised to a power given by the degree of the polynomial. Let us take the example of two variables. Let $k\ge 2$ be a positive integer. What is the least number $s$ of linear forms of $t_1$ and $t_2$ we need, such that every $\Psi(t_1, t_2)$, a degree $k$ homogeneous polynomial of integer coefficients, can be written as 
\beq
\Psi(t_1, t_2)=\sum_{j=1}^s (x_j t_1+y_j t_2)^k,
\endeq 
for some integers $\{x_j\}_{j=1}^s$ and $\{y_j\}_{j=1}^s$? By expanding the right hand side, this amounts to finding integer solutions of 
\beq
\sum_{j=1}^s x_j^{\alpha} y_j^{k-\alpha} = \text{ the coefficient of } t_1^{\alpha} t_2^{k-\alpha}, \text{ for every } 0\le \alpha\le k.
\endeq
Again if one intends to attack this problem using the Hardy-Littlewood circle method, good upper bounds on $J_{s, d, k}(N)$ will become crucial. 

In the end, we mention a third application of our result. Bounds for the number of solutions of Parsell-Vinogradov systems (in one or more dimensions) have recently been discovered to play an unexpected role in proving Burgess-type bounds for short mixed character sums. These generalize the so-called Burgess bound, which led to a subconvexity bound for Dirichlet $L$-functions, and has held an unbroken record for upper bounds for short multiplicative character sums since the 1950's. Precisely, recent work of Heath-Brown and Pierce \cite{HP15} and Pierce \cite{Pie16} proves bounds for short mixed multiplicative character sums in arbitrary dimensions, in which the additive character is evaluated at a polynomial; results on the Parsell-Vinogradov systems allow these bounds to be equivalently sharp uniformly in the degree of the polynomial. \\

Closely related to the number of solutions \eqref{2305f1.5} of the system of equations \eqref{2305e1.5} are several sharp decoupling inequalities. For $d\ge 1$ and $k\ge 2$, let $\mc{S}_{d, k}$ be the $d$ dimensional surface in $\R^{n}$ with
\beq\label{2405e1.15}
n=n_{d}(k):=\binom{d+k}{k}-1,
\endeq
defined by
\beq\label{1121surface}
\mc{S}_{d, k}=\{\Phi_{d, k}(t_1, t_2, ..., t_d): (t_1, t_2, ..., t_d)\in [0, 1]^d\},
\endeq
where the entries of $\Phi_{d, k}(t_1, t_2, ..., t_d)$ consist of all the monomials $t_{1}^{i_1} t_{2}^{i_2}... t_{d}^{i_d}$ with $1\le i_1+i_2+...+i_d\le k$, that is,
\beq
\Phi_{d, k}(t_1, t_2, ..., t_d)=(t_1, t_2, ..., t_d, t_1^2, t_1t_2, \dots, t_1 t_d, t_2^2, t_2t_3, \dots).
\endeq
For a subset $R\subset [0, 1]^d$, define the extension operator associated to the set $R$ by
\beq
E^{(d, k)}_R g(x)=\int_{R} g(t)\exp(x\cdot \Phi_{d, k}(t))dt.
\endeq
Also, for a ball $B\subset \R^n$ of radius $r_B$ centered at $c_B$, we will use the weight
\beq
w_B(x)=(1+\frac{|x-c_B|}{r_B})^{-C},
\endeq
where $C$ is a large  constant whose value will not be specified. For each $p\ge 2$, we denote by $V^{(d, k)}(\delta, p)$ the smallest constant such that
\beq\label{2305e1.20}
\|E^{(d, k)}_{[0, 1]^d}g\|_{L^p(w_{B})}\le V^{(d, k)}(\delta, p) (\sum_{\substack{\Delta: \text{ cube in } [0, 1]^d\\ l(\Delta)=\delta}}\|E^{(d, k)}_{\Delta}g\|_{L^p(w_{B})}^p)^{1/p},
\endeq
for each ball $B\subset \R^{n}$ of radius $\delta^{-k}$. Estimates of the form \eqref{2305e1.20} will be referred to as $l^p L^p$ decouplings. Moreover, define
\beq
\Gamma_{d, k}(p):=\max\{(\frac{1}{2}-\frac{1}{p})d, \max_{1\le j\le d}\{(1-\frac{1}{p}) j-\frac{\mc{K}_{j, k}}{p}\}\}.
\endeq
By a standard argument (see Page 638 of \cite{BDG}), Theorem \ref{main123} follows from
\begin{thm}\label{mainthm}
For every $d\ge 1$, $k\ge 1$ and $p\ge 2$, we have
\beq\label{conj}
V^{(d, k)}(\delta, p) \lesim_{d, k, p, \epsilon} \delta^{-\Gamma_{d, k}(p)},
\endeq
for every $\epsilon>0$.
\end{thm}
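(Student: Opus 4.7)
The plan is to follow the Bourgain--Demeter--Guth \cite{BDG} paradigm for proving sharp decoupling inequalities, adapted here through a double induction: on the dimension $d$, with $d=1$ supplied by \cite{BDG} (the moment curve), and at each dimension $d\ge 2$ on the scale $\delta^{-1}$ via the usual bootstrap of the optimal exponent. By interpolation and the convexity of $p \mapsto V^{(d,k)}(\delta,p)$, it suffices to prove \eqref{conj} at the finitely many corner exponents where the linear pieces of $\Gamma_{d,k}(p)$ meet; the piece corresponding to $j=d$ is the decisive critical exponent, while the pieces for $j<d$ will drop out from lower-dimensional decoupling after suitable rescaling.

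The first main step is a Bourgain--Guth broad/narrow decomposition at scale $\delta$ reducing \eqref{conj} to a multilinear decoupling in which the inputs are supported on cubes $\Delta_1,\ldots,\Delta_M$ whose tangent $d$-planes to $\mc{S}_{d,k}$ are quantitatively transverse inside $\R^n$, measured by a lower bound on a suitable Jacobian minor of $\P_{d,k}$. In the narrow case (no transverse configuration exists) the cubes concentrate near a proper sub-variety of $\mc{S}_{d,k}$, and one invokes decoupling for lower-dimensional Parsell--Vinogradov surfaces $\mc{S}_{j,k}$ with $j<d$, already known by the inductive hypothesis; this is the mechanism by which the maximum over $j$ in $\Gamma_{d,k}(p)$ materializes.

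The second main step is to prove the multilinear estimate through a ball-inflation lemma whose Brascamp--Lieb constant encodes the exponent $\mc{K}_{j,k}$. Running this ball inflation upward from the initial scale $\delta^{-k}$ and alternating with re-applications of the lower-dimensional decoupling at intermediate scales yields the multilinear estimate at the critical $p$ with an acceptable loss $\delta^{-\epsilon}$. Feeding the result back into the broad/narrow reduction and iterating over scales closes the induction on scales and produces the sharp bound $\delta^{-\Gamma_{d,k}(p)}$.

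The hardest step is expected to be the second: identifying and verifying the correct Brascamp--Lieb / multilinear Kakeya inequality adapted to $\mc{S}_{d,k}$. Its data (the subspaces and exponents) must be chosen so that the Brascamp--Lieb exponents reproduce the combinatorial quantities $\mc{K}_{j,k}$ uniformly over the surface, and the Bennett--Carbery--Tao finiteness condition must be verified for the relevant projections of tangent spaces onto the ``degree strata'' of $\P_{d,k}$; unlike the moment curve, this cannot be read off from an elementary Vandermonde computation. A secondary technical difficulty is that the lower-dimensional decoupling invoked in the narrow case must be applied on genuinely $j$-dimensional translation-invariant sub-families of $\mc{S}_{d,k}$ rather than only on coordinate slices, which forces the inductive statement to be maintained in a form flexible enough to accommodate the full family of such sub-surfaces arising from the narrow analysis.
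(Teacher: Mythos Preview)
Your overall architecture---induction on $d$, Bourgain--Guth broad/narrow reduction, multilinear decoupling via ball inflation and Brascamp--Lieb, with the narrow case handled by lower-dimensional decoupling---matches the paper's. You are also right that the verification of the Brascamp--Lieb transversality condition for the tangent data of $\mc{S}_{d,k}$ is the main new ingredient; the paper proves this (Theorem~\ref{linear-algebra}) by a combinatorial Schwartz--Zippel-type argument rather than linear algebra in the style of \cite{BDG-2}.

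There are, however, two substantive divergences worth flagging. First, your reduction ``by interpolation it suffices to prove \eqref{conj} at the finitely many corner exponents, with $j=d$ decisive and $j<d$ falling out of lower-dimensional decoupling'' is not how the paper proceeds, and for good reason: for $d\ge 3$ there are roughly $d/2$ genuine corners whose locations are not transparent, and the corners for $j<d$ do \emph{not} simply reduce to $\mc{S}_{j,k}$-decoupling after rescaling. The paper instead proves \eqref{conj} at every $p$ individually. The organizing role you assign to ``the'' critical exponent is played instead by the (usually non-corner) exponent $p_d(k)=2\mc{K}_{d,k}/d$, which fixes the H\"older indices $q_{d,k}(l)$ used throughout the iteration.

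Second, your description of the iteration (``ball inflation upward, alternating with lower-dimensional decoupling'') understates what is needed. The paper runs a family of ball-inflation lemmas, one for each $1\le l\le k-1$, with the $l$-th using Brascamp--Lieb data of dimension $n_d(l)$; these are interleaved with $l^p L^p$ (not $l^2 L^p$) lower-degree decoupling for $\mc{S}_{d,l}$ and two carefully designed H\"older steps, producing a tree-structured recursion. Closing the bootstrap hinges on an exact algebraic identity $\sum_j b_j\gamma_j=1$ for the resulting weights (Lemma~\ref{180205lemma6.4}), together with the inequality $\lambda_0/\sum_j d_j\tau_j\le \Gamma_{d,k}(p)$ (Lemma~\ref{180205lemma7.5}). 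Your sketch does not yet account for this layer of the argument; without it the iteration does not converge to the sharp exponent. Finally, in the narrow case the paper only needs decoupling for $\mc{S}_{d-1,k}$ (via a multiplicity-one slicing of the zero set of a low-degree polynomial, Lemma~\ref{180227lemma5.4}), not for general $j$-dimensional sub-families, so your stated ``secondary technical difficulty'' does not arise.
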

In the rest of the paper, we will focus on proving Theorem \ref{mainthm}. In other words, to prove Theorem \ref{main123}, we follow the approach of decoupling theory. Decoupling theory originated from the paper \cite{Wolff00} by Wolff, and was further developed by \L aba and Wolff \cite{LW02}, \L aba and Pramanik \cite{LP06}, Garrig\'os and Seeger \cite{GS09}, \cite{GS10} and Bourgain \cite{Bou13}. A breakthrough came with the resolution of the $l^2$-decoupling conjecture by Bourgain and Demeter \cite{BD1}. For more recent development, we refer to \cite{BD16}, \cite{BD17}, \cite{DGS16}, \cite{Oh} and \cite{Li17}, and the reference therein. In particular, in the work of Li \cite{Li17}, the author also obtained an effective bound of the decoupling constant for the parabola. 

Another potential approach of proving Theorem \ref{main123} is via efficient congruencing. This is a powerful tool a number of people have been developing in recent years. We refer to Wooley \cite{Woo3} for a complete overview of the most recent development. We also refer to Wooley \cite{Woo4} and Brandes and Wooley \cite{BW} concerning systems of Diophantine equations that are not translation invariant. Moreover, one can also consult the review paper \cite{Pierce} by Pierce for a detailed discussion on the efficient congruencing method and the decoupling method in Vinogradov's Mean Value Theorem. \\

At the end of the introduction, we mention a few novelties of the present paper. In an earlier attempt of trying to push the argument of \cite{BDG} to higher dimensions, by Bourgain, Demeter and the first author \cite{BDG-2}, one major difficulty one encounters is the linear algebra that is involved in checking the validity of the Brascamp-Lieb inequalities, see Conjecture 1.3 \cite{BDG-2}. By invoking some complicated linear algebra, a special case $d=2, k=3$ of this conjecture was resolved in \cite{BDG-2}. Here we completely resolve this conjecture in every dimension $d$ and for every degree $k$ (see Theorem \ref{linear-algebra}), by using some elementary algebra and some combinatorics argument, in particular, a Schwartz-Zippel type counting argument. 

A second novelty comes from the way how the induction-on-scales argument is carried out, see Section \ref{section-large-p}. If we compare the case $d=1$ with the case $d\ge 2$ in \eqref{sharp-ps}, the most obvious difference is that in the latter case, the upper bound becomes much more complicated in the sense that it contains more terms. Indeed, when $d=1$, there are only two terms involved in the upper bound. Hence to prove \eqref{sharp-ps} for all $s\ge 1$, it suffices to prove it at the critical exponent $s=k(k+1)/2$. Everything else follows from ``interpolating" with trivial bounds at $s=2$ and $s=\infty$. In the case of a general dimension $d$, there are about $d/2$ terms that truly appear on the right hand side of \eqref{sharp-ps}. Hence there are about $d/2$ many critical points we need to find out. Afterwards, we need to prove a sharp upper bound at each critical point.  Our induction-on-scales argument is designed carefully such that all critical exponents (indeed all exponents $s\ge 1$) can be handled uniformly. 

A third novelty is in the ball-inflation lemma (see Lemma \ref{0713flemma1.6}). The idea of ball-inflations originated from the work of Bourgain, Demeter and Guth \cite{BDG}. To deal with the Parsell-Vinogradov systems in higher dimensions, a variant was proposed in \cite{BDG-2}. However, to apply that ball-inflation lemma, one needs to prove sharp $l^q L^p$ decoupling estimates as an intermediate step, for certain $q<p$. Here by an $l^q L^p$ decoupling, we mean an estimate similar to \eqref{2305e1.20}, but with an $l^q$ sum over cubes $\Delta$ in place of the $l^p$ one. In the present paper, we manage to get rid of this technicality, and make use of $l^p L^p$ decouplings only. We postpone the more detailed discussion to Section \ref{180329section4}. \\

{\bf Notation.} Throughout the paper we will write $A\lesssim_{\upsilon}B$ to mean that $A\le CB$ for a certain implicit constant $C$ that depends on the parameter $\upsilon$. Typically, this parameter is either $\epsilon$ or $K$. The implicit constant will never depend on the scale $\delta$ or on the balls we integrate over. Most of the time it will, however, depend on  $d, k$ and on the Lebesgue index $p$. Since these can  be thought of as being fixed parameters, we will in general not write $\lesssim_{d, k, p}$. We use the following notation for averaged integrals:
$$\|F\|_{L^p_\sharp(w_B)}=(\frac1{|B|}\int|F|^pw_B)^{1/p}.$$
Here $B$ is ball in $\R^n$. For a set $A$, the symbol $|A|$ will refer to either the cardinality of $A$ if $A$ is finite, or to its Lebesgue measure if $A$ has positive measure. For a real number $x$, we use $[x]$ to denote the largest integer that is smaller than or equal to $x$. \\

{\bf Acknowledgment.} 
The authors thank Ciprian Demeter for numerous discussions on related topics. The first author thanks Julia Brandes and Lillian Pierce for discussions on applications of their result. Part of this work is contained in the PhD thesis of the second author. He would like to thank his advisor Peter Sarnak for a lot of very helpful discussions. Part of this material is based upon work supported by the National Science Foundation under Grant No. DMS-1440140 while the first author was in residence at the Mathematical Sciences Research Institute in Berkeley, California, during the Spring semester of 2017. The work of the second author is supported by the National Science Foundation under Grant No. 1638352 and the James D. Wolfensohn Fund. 

\hspace{2cm}

\section{\bf A theorem of linear algebra}

For each $t\in[0,1]^d$ and $1\le l\le k-1$, we let
 $\mc{M}^{(l)}(t)$ denote the $n_d(k)\times n_d(l)$ matrix whose columns are the vectors $\Phi_{d, k}^{(\alpha)}(t)$, with $\alpha$ running through all the multi-indices with $1\le |\alpha|\le l$, that is
 \beq
 \mc{M}^{(l)}=(\partial_{1}\Phi_{d, k}, \dots, \partial_d\Phi_{d, k}, \partial_{11}\Phi_{d, k}, \partial_{12}\Phi_{d, k}, \dots, \partial_{1d}\Phi_{d, k}, \partial_{22}\Phi_{d, k}, \partial_{23}\Phi_{d, k}, \dots).
 \endeq
 Take a linear space $V=\text{span}\{ v_1, v_2, ..., v_{dim(V)}\}\subset \R^{n_{d}(k)}$. For convenience, we let all $v_i$ be column vectors in $\R^{n_d(k)}$. Define
$$
\mc{M}^{(l)}_V(t)=(v_1, v_2, ..., v_{dim(V)})^T \times \mc{M}^{(l)}(t).
$$
Here ``$\times $'' refers to the product of two matrices. Hence for each $t\in [0, 1]^d$, $\mc{M}_V^{(l)}(t)$ is a $dim(V)\times n_d(l)$ matrix. We prove

\begin{thm}\label{linear-algebra}
For each $d\ge 2$ and $k\ge 2$, each $1\le l\le k-1$ and each linear subspace $V\subset \R^{n_d(k)}$, the matrix $\mc{M}_V^{(l)}(t)$ has at least one minor  of order
\beq\label{determinant}
\Big[\frac{dim(V)\cdot n_d(l)}{n_d(k)}\Big]+1,
\endeq
whose determinant, viewed as a function of $t\in [0, 1]^d$, does not vanish identically.
\end{thm}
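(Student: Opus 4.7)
The key first step is to reformulate the theorem in polynomial terms. Under the basis identification $\R^{n_d(k)} \cong \R[t_1, \dots, t_d]_{\le k, 0}$ that sends the standard basis vector $e_\mathbf{j}$ to the monomial $t^\mathbf{j}$, a vector $v \in V$ becomes a polynomial $f_v(t) = \sum_\mathbf{j} v_\mathbf{j} t^\mathbf{j}$, and one checks directly that $\langle v, \partial_\alpha \Phi_{d,k}(t)\rangle = \partial_\alpha f_v(t)$. Choosing a basis $v_1, \dots, v_m$ of $V$ with corresponding polynomials $f_i = f_{v_i}$, the matrix $\mc{M}_V^{(l)}(t)$ is precisely the matrix $\bigl(\partial_\alpha f_i(t)\bigr)_{1 \le i \le m,\, 1 \le |\alpha| \le l}$ of partial derivatives. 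Under this reformulation, Theorem \ref{linear-algebra} asserts that for any $m$-dimensional subspace $W = \mathrm{span}(f_1, \dots, f_m) \subset \R[t]_{\le k, 0}$, the rank over $K = \R(t)$ of this matrix of derivatives is at least $\lfloor m\, n_d(l) / n_d(k)\rfloor + 1$.

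The plan is then to apply Cauchy--Binet. Writing $A = (v_1, \dots, v_m)^T$, for any row-set $I \subset [m]$ and column-set $S$ with $|I| = |S| = r$ one has
\[
\det \mc{M}_V^{(l)}\bigl|_{I, S}(t) = \sum_{|T| = r} \det A|_{I, T}\; \det \mc{M}^{(l)}|_{T, S}(t),
\]
so if all such minors for $r = \lfloor m\, n_d(l)/n_d(k)\rfloor + 1$ vanished identically in $t$, every Pl\"ucker vector $\bigl(\det A|_{I, T}\bigr)_T$ would have to lie in the kernel of the linear map $\lambda \mapsto \sum_T \lambda_T \det \mc{M}^{(l)}|_{T, S}(t)$ for every $S$. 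I would exploit the explicit monomial structure of the big matrix: the ``diagonal'' principal minor $T = S$ of $\mc{M}^{(l)}(t)$ is the nonzero constant $\prod_{\alpha \in S} \alpha!$, and every other nonvanishing $r \times r$ minor is a signed sum of monomials $\prod_\alpha t^{\sigma(\alpha) - \alpha}$ over permutations $\sigma: S \to T$ with $\sigma(\alpha) \ge \alpha$ componentwise. A Schwartz--Zippel-style count on these monomials, classified by total degree $\sum_\alpha(|\sigma(\alpha)| - |\alpha|)$, should pin down the $\R$-span of the polynomials $\det \mc{M}^{(l)}|_{T, S}(t)$ in $\R[t]$ and yield enough independent linear constraints on the Pl\"ucker vectors of $A$ to contradict $\mathrm{rank}(A) = m$.

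The principal obstacle will be attaining the \emph{sharp} threshold $\lfloor m\, n_d(l)/n_d(k)\rfloor + 1$: equality is achieved by subspaces $W$ whose polynomials depend on only $d' < d$ of the variables $t_1, \dots, t_d$, in which case the rank equals $n_{d'}(l)$ and the monotonicity $n_{d'}(k)/n_d(k) < n_{d'}(l)/n_d(l)$ (valid for $l\le k-1$ and $d'<d$) forces the bound to coincide with $n_{d'}(l)$. The counting must therefore be calibrated finely enough to recognize such coordinate-subspace configurations as the true extremizers. I expect to handle this via induction on $d$: after a generic linear change of coordinates on $t$, one may reduce to the case that $W$ is not contained in the polynomial ring of any proper subset of the variables, and in this ``non-degenerate'' regime the abundance of diagonal non-vanishing minors of $\mc{M}^{(l)}$ combined with the Schwartz--Zippel-style counting should close the argument.
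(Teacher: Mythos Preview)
Your polynomial reformulation is exactly the paper's first move: $\mc{M}_V^{(l)}(t)$ is the matrix $(\partial_\alpha f_{v_i}(t))$, and the claim becomes a lower bound on its rank over $\R(t)$. After that, however, the paper takes a very different route from your Cauchy--Binet/Pl\"ucker plan, and I think you are missing the key reduction.

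The paper does \emph{not} work with general bases or Pl\"ucker coordinates. Instead it fixes the lexicographic order on monomials, chooses a basis $v_1,\dots,v_H$ of $V$ so that the leading monomials $\widetilde{f_{v_h}} = x^{a_h}$ are pairwise distinct, and shows that replacing each $f_{v_h}$ by $\widetilde{f_{v_h}}$ can only \emph{decrease} the rank (because after rescaling rows by $x^\alpha$ and columns by $\widetilde{f_{v_h}}^{-1}$, the leading-term matrix consists of constants and any nonzero minor of it is the top monomial of the corresponding minor of the original matrix). This collapses the whole problem to a \emph{scalar} matrix $J$ whose $(\alpha,h)$-entry is $a_h^\alpha$, and the question becomes: how many low-degree polynomials (degree $\le l$, no constant term) can vanish on the finite set $A=\{a_h\}\cup\{0\}\subset\Z_{\ge 0}^d$? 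The sharp answer comes from two combinatorial lemmas: a Schwartz--Zippel-type count (the paper's Lemma on $A$ and $B$, giving $|A|\le n_d(k)+1-|B_k^+|$ where $B$ is the set of leading exponents of the annihilators) and a separate inequality $|B_l^+|/|B_k^+|<n_d(l)/n_d(k)$ unless $B_l^+=\mathcal{S}_l$. Together these force $r(V)>H\cdot n_d(l)/n_d(k)$.

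Your approach has two genuine gaps. First, the ``Schwartz--Zippel-style count on these monomials'' applied to the Pl\"ucker vectors is not specified, and it is unclear how counting degrees of the polynomials $\det\mc{M}^{(l)}|_{T,S}(t)$ would ever yield the \emph{sharp} constant $n_d(l)/n_d(k)$ --- in the paper that constant emerges only from the delicate ratio inequality for the ``positive extensions'' $B_l^+,B_k^+$, whose proof occupies a page of layered combinatorics and is not visible from the minor structure of $\mc{M}^{(l)}$ alone. Second, your extremizer analysis is off: the near-equality cases in the paper are \emph{not} characterized by $W$ depending on fewer variables, but rather by the leading-exponent set $B$ of the annihilator space satisfying $B_l^+=\mathcal{S}_l$; your proposed generic linear change of variables and induction on $d$ does not address these configurations, and ``$W$ not contained in a coordinate subring'' gives no usable control on the rank. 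The leading-term reduction is the idea that makes the problem tractable, and without it the Cauchy--Binet route does not close.
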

\begin{proof}
{We postpone the proof to Section \ref{section:blcondition}, where we prove the equivalent Theorem \ref{BLcond}.}
\end{proof}

The result described in the above theorem is almost the minimal requirement if one intends to prove Theorem \ref{mainthm} via the multi-linear approach initiated by Bourgain and Demeter \cite{BD1}. If Theorem \ref{linear-algebra} were false, then there would not exist any collection of sets from $[0, 1]^d$ that are ``transverse'', in the sense of the Brascamp-Lieb transversality condition \eqref{bl-transversality}.  \\

The statement of Theorem \ref{linear-algebra} was conjectured by Bourgain, Demeter and the first author \cite{BDG-2}. The special cases $d=2, k\ge 2$ and $l=1$ were first conjectured by Bourgain and Demeter \cite{BD2}. Moreover, in \cite{BD2} the authors verified the cases $d=2, l=1$ and $2\le k\le 4$. The cases $d=2, k=3$ and $1\le l\le 2$ were verified in \cite{BDG-2}. Here we completely resolve the conjecture.

\hspace{.5cm}

\section{\bf Parabolic rescaling}
We will repeatedly use the following result (see Proposition 7.1 from \cite{BD2}), which will be referred to as  {\em parabolic rescaling}.
\begin{lem}
\label{abc18}
Let $k\ge 2$, $d\ge 1$ and let $0<\delta<\sigma\le 1$. Then for each $p\ge 2$, each cube $R\subset [0, 1]^d$ with side length $\sigma$ and each ball $B\subset \R^n$ with radius $\delta^{-k}$ we have
\beq
\|E_{R}^{(d,k)} g\|_{L^p(w_B)} \le V^{(d, k)}(\frac{\delta}{\sigma}, p) (\sum_{R'\subset R:\; l(R')=\delta} \|E_{R'}^{(d,k)} g\|_{L^{p}(w_B)}^{p})^{1/p}.
\endeq
The sum on the right hand side runs through a collection of cubes of side-length $\delta$ that cover $R$ and have disjoint interiors.
\end{lem}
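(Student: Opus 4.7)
\medskip

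\textbf{Proof plan.} The idea is to reduce the inequality over $R$ to the definition of $V^{(d,k)}(\delta/\sigma,p)$ via an affine change of variables that maps $R$ to $[0,1]^d$.

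Let $c$ denote the corner of $R$, and write $t=c+\sigma s$ with $s\in [0,1]^d$. For every multi-index $i$ with $1\le |i|\le k$, the binomial expansion
\[
(c+\sigma s)^{i}=\sum_{0\le j\le i}\binom{i}{j}c^{i-j}\sigma^{|j|}s^{j}
\]
shows that the monomial $t^{i}$ becomes a polynomial in $s$ of degree $\le k$, so it is a linear combination of $1$ and of the entries of $\Phi_{d,k}(s)$. Collecting these identities for every $i$ with $1\le|i|\le k$ yields the affine relation
\[
\Phi_{d,k}(c+\sigma s)=L\,\Phi_{d,k}(s)+\Phi_{d,k}(c),
\]
where $L=L_{c,\sigma}\in \mathrm{GL}(\R^{n})$ is, in the monomial basis ordered by total degree, block upper-triangular whose diagonal block on the degree-$m$ subspace equals $\sigma^{m}$ times the identity. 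In particular $\det L$ is a positive power of $\sigma$.

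Substituting this into the definition of $E_R^{(d,k)}g$ and changing variables gives
\[
E_R^{(d,k)}g(x)=\sigma^{d}\,e^{x\cdot \Phi_{d,k}(c)}\,E_{[0,1]^d}^{(d,k)}\tilde g\bigl(L^{T}x\bigr),\qquad \tilde g(s):=g(c+\sigma s).
\]
In the $L^{p}(w_B)$-norm on the left, the unimodular factor $e^{x\cdot \Phi_{d,k}(c)}$ disappears, and I then change variables $y=L^{T}x$. The image $L^{T}(B)$ is an ellipsoid with semi-axes of length $\sigma^{m}\delta^{-k}$ (for $m=1,\dots,k$), the shortest of which is $\sigma^{k}\delta^{-k}=(\delta/\sigma)^{-k}$; hence $L^{T}(B)$ can be covered by a finitely-overlapping family $\{\tilde B_\alpha\}$ of balls in $\R^{n}$, each of radius $(\delta/\sigma)^{-k}$. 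The pulled-back weight $w_B\circ (L^{T})^{-1}$ is dominated, up to a constant, by $\sum_{\alpha}w_{\tilde B_\alpha}$.

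On each ball $\tilde B_\alpha$, apply the definition of $V^{(d,k)}(\delta/\sigma,p)$ to $E_{[0,1]^d}^{(d,k)}\tilde g$, obtaining an $\ell^{p}$-sum over cubes $\Delta\subset[0,1]^d$ of side-length $\delta/\sigma$. Finally, reverse the affine substitution: each cube $\Delta$ of side $\delta/\sigma$ in $s$-space corresponds bijectively to a cube $R'=c+\sigma\Delta\subset R$ of side $\delta$ in $t$-space, and the identity established at the start, applied in reverse, turns $E_\Delta^{(d,k)}\tilde g(L^{T}x)$ back into $\sigma^{-d}E_{R'}^{(d,k)}g(x)$ (up to a unit modulus factor). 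Summing over $\alpha$ and the cubes $\Delta$, and using the comparability of weights together with Fubini/Minkowski, yields exactly the stated bound, with the Jacobian factors from the linear change of variables and from rescaling cancelling against $\sigma^{d}$ and against the averaging implicit in $V^{(d,k)}(\delta/\sigma,p)$.

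The only genuinely non-routine point is Step 3: the image $L^{T}(B)$ is not a single ball of radius $(\delta/\sigma)^{-k}$ but rather an ellipsoid that is much longer in the low-degree directions, so one must cover it by several such balls and check that the weight $w_B$ controls $\sum_\alpha w_{\tilde B_\alpha}$ after undoing the linear change of variables. Once this weighted covering step is in place, the rest of the argument is bookkeeping.
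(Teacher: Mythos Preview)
Your proposal is correct and follows exactly the approach the paper indicates: an affine change of variables $t=c+\sigma s$ that turns $R$ into $[0,1]^d$, followed by the definition of $V^{(d,k)}(\delta/\sigma,p)$, with the only nontrivial point being the weighted ball-covering of the ellipsoid $L^T(B)$ (this is precisely what the paper's reference to \cite{BD2} covers). Your remark at the very end about ``the averaging implicit in $V^{(d,k)}(\delta/\sigma,p)$'' is slightly off---there is no averaging, the point is simply that the factor $\sigma^d|\det L|^{-1/p}$ appears identically on both sides of the inequality and cancels---but this does not affect the argument.
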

To prove this lemma, we apply a change of variables to turn the cube $R$ to the unit cube $[0, 1]^d$, and then apply the definition of $V^{(d, k)}$ from \eqref{2305e1.20}.  We refer to \cite{BD2} for the details.

\vspace{.5cm}

\section{\bf Ball-inflation lemmas}\label{180329section4}

The proof of Theorem \ref{mainthm} is via inductions on scales. To prove Theorem \ref{mainthm} for given $d\ge 2$ and $k\ge 2$, we assume that we have obtained \eqref{conj} for every pair $(d', k')\neq (d, k)$ with $d'\le d$ and  $k'\le k$. In this section, we will state a crucial lemma that allows us to pass from scales to scales. \\

Let $m$ be a positive integer. For $1\le j\le m$, let $V_j$ be a linear subspace of $\R^n$ of dimension $n_0$ which is independent of $j$. Also let $\pi_j: \R^n\to V_j$ denote the orthogonal projection onto $V_j$. Define
\beq
\Lambda(f_1, f_2, ..., f_m)=\int_{\R^n} \prod_{j=1}^m f_j (\pi_j (x))dx,
\endeq
for $f_j: V_j\to \C$. We recall the following theorem due to Bennett, Carbery, Christ and Tao \cite{BCCT}.
\begin{thm}[\cite{BCCT}]
\label{bcct}
Given $p\ge 1$, the estimate
\beq\label{0510e2.2}
|\Lambda(f_1, f_2, ..., f_m)| \lesim \prod_{j=1}^m \|f_j\|_{p}
\endeq
holds if and only if $np=n_0 m$ and the following Brascamp-Lieb transversality condition is satisfied
\beq\label{bl-transversality}
dim(V) \le \frac{1}{p} \sum_{j=1}^m dim(\pi_j(V)), \text{ for each linear subspace } V\subset \R^n.
\endeq
\end{thm}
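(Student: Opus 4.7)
The plan is to prove necessity and sufficiency of the two conditions separately. Necessity is standard: scaling $f_j \mapsto f_j(\lambda\,\cdot)$ in \eqref{0510e2.2} shows the left side scales as $\lambda^{-n}\prod\|f_j\|_p$ while the right scales as $\lambda^{-n_0 m/p}\prod\|f_j\|_p$, so matching exponents forces $np = n_0 m$. The transversality condition \eqref{bl-transversality} is obtained by testing with Gaussians concentrated near a subspace $V \subset \mathbb{R}^n$: for $A > 0$ large, set $f_j(y) = \exp(-A|y'|^2 - |y''|^2)$ where $y = y' + y''$ is the decomposition according to $\pi_j(V)$ and its orthogonal complement in $V_j$. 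Computing both sides as $A \to \infty$ and extracting the leading power of $A$ yields exactly \eqref{bl-transversality}.

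The main obstacle is sufficiency, which I would attack in two stages. Stage one is Lieb's reduction to Gaussian extremizers: the best constant in \eqref{0510e2.2} equals the supremum of the same ratio taken over centered Gaussian inputs $f_j(y) = \exp(-\pi\langle A_j y, y\rangle)$ with $A_j$ positive definite on $V_j$. This is a soft argument based on rearrangement and the fact that a Gaussian saturates every translation-invariant $L^p$ quantity; Lieb originally proved it for the classical Brascamp-Lieb setup, and the same argument adapts to the current geometric setting. Granting this, the inequality \eqref{0510e2.2} reduces to the purely algebraic statement
\[
\det\Bigl(\textstyle\sum_{j=1}^m \pi_j^{*} A_j\, \pi_j\Bigr)^{1/2} \;\ge\; C\, \prod_{j=1}^m \det(A_j)^{1/(2p)},
\]
uniformly in the choice of $A_j > 0$.

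Stage two is the heat-flow monotonicity argument of Bennett, Carbery, Christ and Tao. For each $A_j$, I would introduce the one-parameter family $A_j(t) = (A_j^{-1} + tI)^{-1}$, which corresponds to evolving the Gaussian $f_j$ by the heat semigroup on $V_j$. Set
\[
Q(t) = \det\Bigl(\textstyle\sum_j \pi_j^{*} A_j(t)\, \pi_j\Bigr)^{1/2} \Big/ \prod_j \det(A_j(t))^{1/(2p)}.
\]
A direct differentiation, together with Cauchy-Schwarz applied componentwise to the tangent vectors $\pi_j A_j(t)^{1/2}$, shows $Q'(t) \ge 0$ precisely when the transversality condition \eqref{bl-transversality} holds for every subspace $V$; this is where the dimension inequality enters in an essential way, via the Cauchy-Binet expansion of the determinant. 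Taking $t \to \infty$, the Gaussians flatten and $Q(t)$ converges to an explicit constant that can be computed from the scaling relation $np = n_0 m$ alone. Monotonicity then forces $Q(0) \ge \lim_{t \to \infty} Q(t)$, which is exactly the desired determinantal inequality.

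The hard part is executing the heat-flow differentiation and identifying the quadratic form whose non-negativity is equivalent to \eqref{bl-transversality}; this requires care with the index bookkeeping and the Cauchy-Binet expansion of $\det(\sum_j \pi_j^{*} A_j \pi_j)$ over all choices of $n_0$-dimensional coordinate subspaces in each $V_j$. A cleaner but more technical alternative would be to induct on $\sum_j \dim V_j$: if some proper subspace $V$ saturates \eqref{bl-transversality} with equality, one can split the inequality along $V$ and apply the inductive hypothesis to two lower-dimensional Brascamp-Lieb data; if no such $V$ exists, a compactness argument produces Gaussian extremizers directly. Either route reaches the same destination, but the heat-flow approach has the advantage of giving a constructive proof with an explicit (and in many cases sharp) constant.
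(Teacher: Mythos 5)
The paper does not prove this theorem: it is imported wholesale from Bennett--Carbery--Christ--Tao, with the citation \cite{BCCT} doing all the work. So there is no ``paper's own proof'' to compare against; the question is only whether your outline is a faithful sketch of the known argument.

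On that question, your outline is essentially correct and tracks the BCCT strategy. The necessity part (scaling for the exponent relation, Gaussians concentrated near a subspace $V$ for the transversality condition) is standard and you state it correctly. For sufficiency, the two-stage plan---Lieb's Gaussian extremizer theorem reducing \eqref{0510e2.2} to a determinant inequality, followed by heat-flow monotonicity of the Gaussian ratio $Q(t)$---is indeed how BCCT (and their antecedents) attack the problem. The one place you are glossing over a genuine obstacle is in the heat-flow step: the differential inequality $Q'(t)\ge 0$ does \emph{not} follow directly from \eqref{bl-transversality} alone. What BCCT actually prove is that monotonicity holds when the data are \emph{simple}, meaning the inequality in \eqref{bl-transversality} is strict for every proper nonzero subspace $V$; when some proper $V$ saturates it with equality, one must split the Brascamp--Lieb datum along $V$ into a quotient piece and a subspace piece and induct on dimension. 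You do gesture at this in your last paragraph as an ``alternative route,'' but it is not an alternative---it is a mandatory component of the argument, and the heat flow only closes the simple case. Separately, Lieb's theorem in the generality you need (arbitrary surjections onto Euclidean spaces, general exponents) is itself nontrivial and is proved in the BCCT framework by a rearrangement-plus-compactness argument that deserves more than the one sentence you give it. With those two caveats, your sketch is a reasonable summary of the cited proof, and nothing in it contradicts how the theorem is actually established.

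One small remark specific to the form stated in this paper: here all $V_j$ have the same dimension $n_0$ and all $f_j$ carry the same exponent $p$, with the scaling relation collapsing to $np = n_0 m$. This is a strict special case of the general BCCT theorem (which allows distinct $p_j$ and distinct $\dim V_j$), so the citation does more than is strictly needed; your proof outline, written for the symmetric case, would in fact go through verbatim in the general case once you replace $1/p$ by the tuple $(1/p_j)$.
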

An equivalent formulation of the estimate \eqref{0510e2.2} is
\beq\label{0510e2.4}
\|(\prod_{j=1}^m g_j\circ \pi_j)^{1/m} \|_q \lesim (\prod_{j=1}^m \|g_j\|_2)^{1/m},
\endeq
with $q=\frac{2n}{n_0}.$
The restriction that $p\ge 1$ becomes $n_0 m\ge n$. Throughout the proof, the parameter $m$ will always be chosen large enough. Hence this condition is always satisfied. The transversality condition \eqref{bl-transversality} becomes
\beq\label{0510e2.5}
dim(V)\le \frac{n}{n_0 m}\sum_{j=1}^m dim(\pi_j(V)), \text{ for each subspace } V\subset \R^n.
\endeq
For a fixed degree $k\ge 2$ in the definition of $\mc{S}_{d ,k}$ in \eqref{1121surface}, we will choose
\beq
n=n_d(k) \text{ and } n_0=n_d(l) \text{ for each } l\in \{1, 2, ..., k-1\}.
\endeq
Here $n$ is the dimensional of the space that we are working in. The different choices of $n_0$ come from the fact that at difference stages of our proof, we will view our $d$-dimensional surface $\mc{S}_{d, k}$ as a ``$n_0$-dimensional'' surface in $\R^n$ (see Lemma \ref{0713flemma1.6}). In another word, we will look at the $l$-th order tangent space of $\mc{S}_{d, k}$, given by
\beq
V^{(l)}(t):=\text{span}\{\Phi^{(\alpha)}(t)\}_{1\le |\alpha|\le l} \text{ at a point } t\in [0, 1]^d,
\endeq
and this results in a linear space of dimension $n_0$ as above. Moreover, $m$ will again be a large constant that will be chosen later. \\

To work with the Brascamp-Lieb transversality condition \eqref{0510e2.5}, we introduce the following
notion of transversality.
\begin{defi}\label{0713fdefi1.2}
Let $M$ be a large number. The $M$ sets $R_1, ..., R_M\subset [0, 1]^d$ are called $\nu$-transverse, if for each polynomial $P(t)$ with $\text{deg}(P)\le k^{100 d!}$ and $\|P\|= 1$, we have that for each choice of $\frac{M}{\Theta_{d, k}}$ different sets $R_{i_1}, ..., R_{i_{\frac{M}{\Theta_{d, k}}}}$, there exists at least one set $R_{i_j}$ such that
\beq
|P(t)|\ge \nu, \text{ for each } t\in R_{i_j}.
\endeq
Here $\Theta_{d, k}$ is a large constant depending only on $d$ and $k$ that will be determined later. Moreover $\|P\|$ denotes a norm of the polynomial $P$ which is given by the $l^1$ sum of all the coefficients of $P$.
\end{defi}

Intuitively, a collection of sets is called transverse, if the zero set of an arbitrary normalised polynomial of a ``small" degree passes through only a tiny portion of the given collection of sets.

Based on Theorem \ref{linear-algebra}, we are able to show that the notion of transversality introduced in Definition \ref{0713fdefi1.2} is stronger than the Brascamp-Lieb transversality condition. Indeed, we will prove the following slightly stronger result, which is an essential ingredient in deriving the following crucial ball-inflation lemma (Lemma \ref{0713flemma1.6}). \\

Let $K$ be a large number. By $K$-cube we mean a dyadic cube of length $K^{-1}$ inside the unit cube $[0, 1]^d$. Let $Col_K$ denote the collection of all $K$-cubes in $[0, 1]^d$.

\begin{lem}\label{0727lemma4.4h}
Let $K$ be a large integer. Suppose we have a collection of $M$ many $K$-cubes $R_1, ..., R_M$, which are $\nu_K$-transverse for some $\nu_K>0$. If $M\ge K$, then for each $t_j\in R_j$ and each $1\le l\le k-1$, the collection of linear spaces $\{V^{(l)}(t_j)\}_{1\le j\le M}$ satisfy the Brascamp-Lieb transversality condition \eqref{bl-transversality} with $n_0=n_d(l)$.
\end{lem}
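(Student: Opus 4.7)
The plan is to verify the Brascamp-Lieb transversality condition \eqref{bl-transversality} (with $n_0 = n_d(l)$ and $p = n_d(l) M / n_d(k)$) by identifying $\dim(\pi_j(V))$ with the rank of the matrix $\mc{M}_V^{(l)}(t_j)$ from Theorem \ref{linear-algebra}, then invoking Definition \ref{0713fdefi1.2} to force that rank to be large on all but a small fraction of the cubes $R_j$.

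First I would fix an arbitrary linear subspace $V\subset\R^{n_d(k)}$ and a basis of it. The kernel of the projection $\pi_j|_V : V \to V^{(l)}(t_j)$ is $V\cap V^{(l)}(t_j)^{\perp}$, and this is precisely the kernel of the linear map $v\mapsto (v\cdot\Phi_{d,k}^{(\alpha)}(t_j))_{1\le|\alpha|\le l}$ that is represented by $\mc{M}_V^{(l)}(t_j)$. Hence $\dim(\pi_j(V))=\operatorname{rank}\mc{M}_V^{(l)}(t_j)$, and the condition \eqref{bl-transversality} rearranges to
$$
\sum_{j=1}^M \operatorname{rank}\mc{M}_V^{(l)}(t_j)\;\ge\;\frac{\dim(V)\cdot n_d(l)\cdot M}{n_d(k)}.
$$
The trivial case $\dim V = n_d(k)$ forces both sides to equal $n_d(l)M$ once one notes that $V^{(l)}(t_j)$ has full dimension $n_d(l)$ for generic $t_j$, so from here on I would assume $\dim V < n_d(k)$.

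Next I would set $r:=[\dim(V)\, n_d(l)/n_d(k)]+1$. A short check using $n_d(l)<n_d(k)$ shows $r\le\min(\dim V, n_d(l))$ whenever $\dim V<n_d(k)$, so $r$ is an admissible minor order. Theorem \ref{linear-algebra} then produces an $r\times r$ minor of $\mc{M}_V^{(l)}(t)$ whose determinant $P(t)$ is not identically zero. Since each entry of $\mc{M}_V^{(l)}(t)$ is polynomial in $t$ of degree at most $k-1$, one has $\deg P\le r(k-1)\le n_d(l)(k-1)$, which is far below $k^{100\,d!}$. After normalizing so that $\|P\|=1$, applying the $\nu_K$-transversality assumption to $P$ forces strictly fewer than $M/\Theta_{d,k}$ of the cubes $R_j$ to meet the zero set of $P$. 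On the remaining at least $M(1-1/\Theta_{d,k})$ cubes one has $P(t_j)\neq 0$, so $\operatorname{rank}\mc{M}_V^{(l)}(t_j)\ge r$ there, giving
$$
\sum_{j=1}^M \operatorname{rank}\mc{M}_V^{(l)}(t_j)\;\ge\; r\, M\Bigl(1-\tfrac{1}{\Theta_{d,k}}\Bigr).
$$

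The main obstacle, and the final step, is comparing $r(1-1/\Theta_{d,k})$ with the target $\dim(V)\, n_d(l)/n_d(k)$ uniformly in $V$. The key arithmetic observation is that $\dim(V)\, n_d(l)/n_d(k)$ is a rational number whose denominator divides $n_d(k)$; hence the ``floor plus one'' construction of $r$ always produces a $V$-independent gap
$$
r - \frac{\dim(V)\, n_d(l)}{n_d(k)} \;\ge\; \frac{1}{n_d(k)}.
$$
Choosing $\Theta_{d,k}$ to depend only on $d,k$ and to exceed $r\cdot n_d(k)$ (for instance $\Theta_{d,k}:=2\,n_d(k)^2$) absorbs the multiplicative loss $1/\Theta_{d,k}$ into this universal gap and yields the desired inequality. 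The assumption $M\ge K$, with $K$ taken large enough in terms of $d$ and $k$, guarantees $M\ge \Theta_{d,k}$ so that the transversality hypothesis is non-vacuous. The whole argument's delicacy is concentrated in the rationality observation above: this is exactly why Theorem \ref{linear-algebra} is formulated with ``floor plus one'' rather than with the ratio itself.
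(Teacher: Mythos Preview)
Your proof is correct and follows essentially the same route as the paper's: identify $\dim\pi_j(V)$ with $\operatorname{rank}\mc{M}_V^{(l)}(t_j)$, invoke Theorem~\ref{linear-algebra} to get a nonvanishing minor of order $r=[\dim(V)\,n_d(l)/n_d(k)]+1$, and use the transversality hypothesis to force that rank on all but $M/\Theta_{d,k}$ of the cubes. Your rationality observation giving the uniform gap $r-\dim(V)\,n_d(l)/n_d(k)\ge 1/n_d(k)$ makes explicit what the paper leaves as ``easily seen,'' and your direct normalization of $P$ replaces the paper's compactness step with the simpler remark that Definition~\ref{0713fdefi1.2} already quantifies over all unit-norm polynomials.

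One small imprecision: in the case $\dim V=n_d(k)$ you write that $V^{(l)}(t_j)$ has full dimension $n_d(l)$ ``for generic $t_j$,'' but the lemma requires the Brascamp--Lieb condition at the \emph{given} points $t_j\in R_j$, so genericity alone would not suffice. In fact $\dim V^{(l)}(t)=n_d(l)$ for \emph{every} $t$: the $n_d(l)\times n_d(l)$ submatrix of $\mc{M}^{(l)}(t)$ obtained by restricting to rows $\beta$ with $1\le|\beta|\le l$ is block lower-triangular (in the grading by $|\beta|$) with diagonal entries $\alpha!$, hence invertible for all $t$. With that correction your treatment of this boundary case is complete (the paper itself glosses over it, since Theorem~\ref{linear-algebra} as stated does not cover $\dim V=n_d(k)$).
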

\begin{proof}

Fix a linear space $V\subset \R^{n_d(k)}$ given by $\text{span}\{ v_1, v_2, ..., v_{dim(V)}\}$. We need to show that
\beq\label{0713f1.11}
dim(V)\le \frac{n_d(k)}{M\cdot n_d(l)} \sum_{j=1}^M dim(\pi_j(V)).
\endeq
By the rank-nullity theorem, $dim(\pi_j(V))$ equals the rank of the matrix $\mc{M}_V^{(l)}(t_j)$. By Theorem \ref{linear-algebra} and a simple compactness argument, there exists a small constant $\theta_{d, k}>0$, such that the matrix $\mc{M}_V^{(l)}$ has at least one minor determinant of order given by \eqref{determinant}, denoted by $P$, that satisfies $\frac{1}{\theta_{d, k}}\ge \|P\|\ge \theta_{d, k}$.  Moreover, we know that the degree of the polynomial $P$ is smaller than
\beq
k\cdot \left(\Big[\frac{dim(V)\cdot n_d(l)}{n_d(k)}\Big]+1 \right)\le k^{100d!}.
\endeq
Recall that $R_1, ..., R_M$ are $\nu_K$-transverse. By definition, we know that there exists at least $M(1-\frac{1}{\Theta_{d, k}})$ different sets from $\{R_j\}_{1\le j\le M}$, on each of which the polynomial $P$ does not vanish. This is the same as saying that on these $M(1-\frac{1}{\Theta_{d, k}})$ many cubes, the matrix $M_V^{(l)}$ has rank at least
\beq
\Big[\frac{dim(V)\cdot n_d(l)}{n_d(k)}\Big]+1.
\endeq
Hence the right hand side of \eqref{0713f1.11} is greater than
\beq
\frac{n_d(k)}{n_d(l)} (1-\frac{1}{\Theta_{d, k}}) \left( \Big[\frac{dim(V)\cdot n_d(l)}{n_d(k)}\Big]+1 \right).
\endeq
By choosing $\Theta_{d, k}$ large enough, the last display is easily seen to be bigger than or equal to $dim(V)$. This finishes the proof of the estimate \eqref{0713f1.11}.

\end{proof}

We are ready to state one main lemma.
\begin{lem}[Ball-inflation lemma]\label{0713flemma1.6}
Let $R_1 ,..., R_M$ be $M$ cubes from $Col_K$ that are $\nu$-transverse for some $\nu>0$. Fix $k\ge 2$ and $n=n_d(k)$. Fix $1\le l\le k-1$. Let $B$ be an arbitrary ball in $\R^n$ of radius $\rho^{-(l+1)}$. Let $\mc{B}$ be a finitely overlapping cover of $B$ with balls $\Delta$ of radius $\rho^{-l}$. Then for each $p\ge \frac{2n_d(k)}{n_d(l)}$, for each $g:[0, 1]^d\to \C$, we have
\beq\label{0727e4.14h}
\begin{split}
& \frac{1}{|\mc{B}|} \sum_{\Delta\in \mc{B}}\left[ \prod_{i=1}^M\left( \sum_{J_i\subset R_i, l(J_i)=\rho} \|E^{(d, k)}_{J_i}g\|_{L^{\frac{p\cdot n_d(l)}{n_d(k)}}_{\#}(w_{\Delta})}^{\frac{p\cdot n_d(l)}{n_d(k)}} \right)^{\frac{n_d(k)}{p\cdot n_d(l)}} \right]^{\frac{p}{M}}\\
& \lesim_{\epsilon} \rho^{-\epsilon} \left[ \prod_{i=1}^M\left( \sum_{J_i\subset R_i, l(J_i)=\rho} \|E^{(d, k)}_{J_i}g\|_{L^{\frac{p\cdot n_d(l)}{n_d(k)}}_{\#}(w_B)}^{\frac{p\cdot n_d(l)}{n_d(k)}} \right)^{\frac{n_d(k)}{p\cdot n_d(l)}} \right]^{\frac{p}{M}},
\end{split}
\endeq
for every $\epsilon>0.$
\end{lem}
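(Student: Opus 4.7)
The plan is to reduce the inequality to the Brascamp--Lieb inequality of Theorem \ref{bcct} after a local-constancy approximation. For each $i$, fix a representative $t_{R_i}\in R_i$ and let $\pi_i\colon \R^n\to V_i:=V^{(l)}(t_{R_i})$ be the orthogonal projection. Since $M$ may be assumed to satisfy $M\ge K$ (otherwise the lemma is a direct consequence of H\"older's inequality without any Brascamp--Lieb input), Lemma \ref{0727lemma4.4h} guarantees that $\{V_i\}_{i=1}^M$ satisfy the Brascamp--Lieb transversality condition \eqref{0510e2.5} with $n_0=n_d(l)$ and $n=n_d(k)$. Set $q_{\mathrm{exp}}=pn_d(l)/n_d(k)\ge 2$, $c=n_d(k)/(Mn_d(l))$, and $F_i(x)=\sum_{J_i\subset R_i,\,l(J_i)=\rho}|E^{(d,k)}_{J_i}g(x)|^{q_{\mathrm{exp}}}$. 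The desired inequality then rewrites as
\[
\frac{1}{|\mathcal B|}\sum_{\Delta\in \mathcal B}\prod_{i=1}^M\Big(\tfrac{1}{|\Delta|}\int F_i\, w_\Delta\Big)^{c}\lesim_\epsilon \rho^{-\epsilon}\prod_{i=1}^M\Big(\tfrac{1}{|B|}\int F_i\, w_B\Big)^{c}.
\]
Note that $c$ equals $q_{BL}/(2M)$ with $q_{BL}=2n_d(k)/n_d(l)$ being the critical exponent appearing in \eqref{0510e2.4}.

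The first key step is a \emph{local constancy} reduction. Each summand $|E^{(d,k)}_{J_i}g|^{2}$ has Fourier support in the difference set $\Phi_{d,k}(J_i)-\Phi_{d,k}(J_i)$, which by Taylor expansion lies in an $O(\rho^{l+1})$-neighborhood of $V^{(l)}(t_{J_i})$. Since $R_i$ has diameter $O(K^{-1})$ with $K$ chosen as a large fixed constant depending only on $d,k,\nu$, the tangent spaces $V^{(l)}(t_{J_i})$ for $t_{J_i}\in R_i$ are uniformly close to $V_i$, so this Fourier support is effectively in an $O(\rho^{l+1})$-neighborhood of $V_i$. By the uncertainty principle, $|E^{(d,k)}_{J_i}g|^{2}$, hence $|E^{(d,k)}_{J_i}g|^{q_{\mathrm{exp}}}$, and thus $F_i$ itself, is essentially constant along $\pi_i$-fibers at scale $\rho^{-(l+1)}$. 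Formally, I would construct nonnegative $\tilde F_i\colon V_i\to \R$ satisfying $F_i(x)\lesim \rho^{-\epsilon}\tilde F_i(\pi_i(x))$ for $x\in B$, together with the complementary matching $\int_{B'}F_i\, w_{B'}\approx V_{\mathrm{fiber}}(B')\int_{\pi_i(B')}\tilde F_i$ (up to $\rho^{-\epsilon}$) for any sub-ball $B'\subset 2B$ of radius at least $\rho^{-l}$, where $V_{\mathrm{fiber}}(B')$ denotes the $\pi_i$-fiber volume in $B'$.

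Granted this reduction, the LHS is bounded up to $\rho^{-\epsilon}$ by $\frac{1}{|\mathcal B|}\sum_{\Delta}\prod_i\big(\frac{1}{|\pi_i(\Delta)|}\int_{\pi_i(\Delta)}\tilde F_i\big)^c$. A further application of tangential local constancy of $\tilde F_i$ at scale $\rho^{-l}$ (inherited from the $O(\rho)$-diameter of $\Phi_{d,k}(J_i)-\Phi_{d,k}(J_i)$) together with a Riemann-sum argument converts this discrete average into $\frac{1}{|B|}\int_B \prod_i \tilde F_i(\pi_i(x))^c\,dx$, with another $\rho^{-\epsilon}$ loss. The critical $L^{q_{BL}}$ formulation \eqref{0510e2.4} of Theorem \ref{bcct}, applied with $m=M$ and $g_i=\tilde F_i^{1/2}$, then gives
\[
\int_B\prod_i\tilde F_i(\pi_i(x))^c\,dx\lesim \prod_i\Big(\int_{V_i}\tilde F_i\Big)^{c}.
\]
Converting $\int_{V_i}\tilde F_i$ back to the $L^1_\sharp(w_B)$-type average of $F_i$ via the matching property, and using the dimensional identity $\prod_i|\pi_i(B)|^c=|B|$ (equivalent to $Mc\cdot n_d(l)=n_d(k)$), yields the desired right-hand side.

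The main obstacle is the rigorous execution of the local-constancy reduction in the second paragraph: the summands $|E^{(d,k)}_{J_i}g|^2$ have Fourier supports near the \emph{different} subspaces $V^{(l)}(t_{J_i})$, while the Brascamp--Lieb projection $\pi_i$ is onto the fixed $V_i$. Absorbing this mismatch requires $K$ to be taken sufficiently large that the Brascamp--Lieb constant for the perturbed spaces remains under control, a standard device in multilinear restriction arguments. A related subtlety, explicitly flagged as a novelty in the introduction, is that we avoid passing through any intermediate $l^qL^p$ decoupling with $q<p$; instead, we aggregate $\sum_{J_i}$ into $F_i$ before invoking Brascamp--Lieb, keeping every step at the $l^{q_{\mathrm{exp}}}$ level.
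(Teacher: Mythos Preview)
Your overall architecture is the right one and matches the paper's route in spirit: the paper also says the lemma follows from Brascamp--Lieb (Theorem \ref{bcct}) together with the transversality supplied by Lemma \ref{0727lemma4.4h}. The difference is that the paper does \emph{not} apply Brascamp--Lieb directly to a single family of pullbacks $\tilde F_i\circ\pi_i$; instead it passes through a \emph{multilinear Kakeya} inequality for plates, proven from Brascamp--Lieb via the induction-on-scales mechanism of \cite{Guth} and \cite{BBFL} (exactly as in Theorem 6.6 of \cite{BDG} and Lemma 6.5 of \cite{BDG-2}).

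The reason your direct reduction does not go through is the step where you claim that the Fourier support of $|E^{(d,k)}_{J_i}g|^2$ lies in an $O(\rho^{l+1})$-neighborhood of the \emph{fixed} space $V_i=V^{(l)}(t_{R_i})$. The difference set $\Phi_{d,k}(J_i)-\Phi_{d,k}(J_i)$ extends a distance $O(\rho)$ along $V^{(l)}(t_{J_i})$, and the angle between $V^{(l)}(t_{J_i})$ and $V_i$ is $O(K^{-1})$; hence the support sits only in an $O(K^{-1}\rho)$-neighborhood of $V_i$, not $O(\rho^{l+1})$. Equivalently, $F_i$ is locally constant at scale $\rho^{-(l+1)}$ along the fibers of the \emph{$J_i$-dependent} projection onto $V^{(l)}(t_{J_i})$, but along the fixed $\pi_i$-fibers it is only constant at scale $O(K\rho^{-1})$. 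Since $K$ is fixed independently of $\rho$, no choice of $K$ repairs this; the displacement $K^{-1}\rho^{-(l+1)}$ always swamps the tangential constancy scale $\rho^{-1}$ once $l\ge 1$. Consequently the construction of a single $\tilde F_i$ on $V_i$ satisfying both $F_i\lesssim\rho^{-\epsilon}\tilde F_i\circ\pi_i$ on $B$ and the matching $L^1$ property fails.

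What is actually needed is precisely the multilinear Kakeya inequality for families of $\rho^{-(l+1)}\times\cdots\times\rho^{-(l+1)}\times\rho^{-l}\times\cdots\times\rho^{-1}$ plates whose long directions vary over an $O(K^{-1})$-cap near $V_i^\perp$. That inequality is not a one-shot consequence of Theorem \ref{bcct}; it is obtained by combining the Brascamp--Lieb data (guaranteed uniformly for all nearby choices of $t_{J_i}$ by \cite{BBFL} stability plus Lemma \ref{0727lemma4.4h}) with Guth's induction-on-scales argument, which is where the $\rho^{-\epsilon}$ loss enters. Once that Kakeya-type bound is in hand, the rest of your outline (replacing $\|E_{J_i}g\|_{L^{q_{\mathrm{exp}}}_\#(w_\Delta)}$ by a plate-constant function of the center of $\Delta$, summing over $\Delta$, and applying the Kakeya bound) is exactly the argument of \cite{BDG}, Theorem 6.6. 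Your final paragraph about keeping everything at the $l^{q_{\mathrm{exp}}}$ level is correct and is indeed the novelty here relative to \cite{BDG} and \cite{BDG-2}.
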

The proof of Lemma \ref{0713flemma1.6} relies on multilinear Kakeya inequalities, and is almost the same as that of Theorem 6.6 in \cite{BDG} (see also Lemma 6.5 in \cite{BDG-2}). Moreover, the required multilinear Kakeya inequalities can be proven by applying the Brascamp-Lieb inequalities in Theorem \ref{bcct} and the induction argument in \cite{Guth} and \cite{BBFL}. Here we leave out the details. \\

The idea of ball-inflations originated from the work of Bourgain, Demeter and Guth \cite{BDG} (see Theorem 6.6 there): Fix dimension $d=1$. Under the same assumptions as in Lemma \ref{0713flemma1.6}, the authors of \cite{BDG} proved
\beq\label{0727e4.14hh}
\begin{split}
& \frac{1}{|\mc{B}|} \sum_{\Delta\in \mc{B}}\left[ \prod_{i=1}^M\left( \sum_{J_i\subset R_i, l(J_i)=\rho} \|E^{(d, k)}_{J_i}g\|_{L^{\frac{p\cdot n_d(l)}{n_d(k)}}_{\#}(w_{\Delta})}^{2} \right)^{1/2} \right]^{p/M}\\
& \lesim \rho^{-\epsilon} \left[ \prod_{i=1}^M\left( \sum_{J_i\subset R_i, l(J_i)=\rho} \|E^{(d, k)}_{J_i}g\|_{L^{\frac{p\cdot n_d(l)}{n_d(k)}}_{\#}(w_B)}^{2} \right)^{1/2} \right]^{p/M}.
\end{split}
\endeq
Notice that on both sides of \eqref{0727e4.14hh} we have $l^2$ summations over $J_i\subset R_i$, which is different from that of \eqref{0727e4.14h}.

Moreover, in an earlier attempt of pushing the analysis of \cite{BDG} to higher dimensions, by Bourgain, Demeter and the first author, the case $d=2, k=3$ was considered. There an estimate similar to \eqref{0727e4.14hh}, with $l^{\frac{8}{3}}$ sum over $J_i\subset R_i$ in place of the $l^2$ sum, was proposed to use. The exponent $\frac{8}{3}$ plays a crucial role in the analysis in \cite{BDG-2}, see Page 833 for a detailed discussion. The use of this exponent brought in a whole host of extra technicalities. For instance, it forces us to understand sharp $l^q L^p$ decoupling inequalities associated with $\mc{S}_{d, k}$ for an exponent $q\ (<p)$ that is as small as possible. \\

One new feature that is introduced in the current paper is that no any magical number like $\frac{8}{3}$ is necessary. Moreover, we do not need to invoke any $l^q L^p$ decoupling with $q<p$ either. This will be explained in detail when we come to applying the ball-inflation lemma, in the iteration argument in Section \ref{section-large-p}.

\section{\bf The Bourgain-Guth argument}

For a large number $K\in \N$, for $K\le M\le K^d$, we denote by $V^{(d, k)}(\delta, p, \nu_K, M)$ the smallest constant such that
\beq
\left\|\left(\prod_{i=1}^{M} E^{(d, k)}_{R_i} g\right)^{\frac{1}{M}}\right\|_{L^p(w_B)}\le V^{(d, k)}(\delta, p, \nu_K, M) \prod_{i=1}^{M} \left(\sum_{J\subset R_i; l(J)=\delta} \|E^{(d, k)}_J g\|_{L^p(w_B)}^{p} \right)^{\frac{1}{p\cdot M}}.
\endeq
Here $B\subset \R^n$ is an arbitrary ball of radius $\delta^{-k}$, and $R_1, ..., R_{M}$ are $\nu_K$-transverse cubes from $Col_K$, with a constant $\nu_K$ depending only on $K$. Moreover, we define
\beq\label{0303e4.25}
V^{(d, k)}(\delta, p, \nu_K)=\sup_{K\le M\le K^d} V^{(d, k)}(\delta, p, \nu_K, M).
\endeq
As can be seen from the definition of the multi-linear decoupling constant in \eqref{0303e4.25}, the degree of the multi-linearity $M$ is no longer a fixed constant, but takes values in an interval depending on $K$. This kind of multi-linear decoupling constant first appeared in \cite{BDG-2}. In previous works \cite{BD2} and \cite{BDG}, only a fixed degree $M$ is used. This use of multi-linearity is forced, on one hand by an incomplete understanding of the geometry of transverse sets, and on the other hand, by the needs of running the Bourgain-Guth argument \cite{BG} more efficiently.

In Theorem \ref{linear-algebra}, we only proved that transverse sets exist. In another word, given a collection of $K$-cubes, if a ``large'' portion of them do not sit near the zero set of any polynomial of degree less than $k^{100 d!}$, then they are transverse. This should be considered as a qualitative, but not quantitative understanding of transversality. However, even this qualitative version requires some complicated linear algebra and combinatorics. It will be of interest to know whether one can work with a fixed degree $M$ of multi-linearity which depends only on $k$ (as in \cite{BDG}).

A second place where a range of degrees of multi-linearity is required is in the forthcoming Bourgain-Guth argument. There this subtle point will be explained in detail. \\

\begin{thm}\label{0723theorem4.6h}
For each $p\ge 2$, $\epsilon>0$ and $K\in \N$, there exists $C_{K, p, \epsilon}>0$ and $\beta(K, p, \epsilon)>0$ with
\beq
\lim_{K\to \infty} \beta(K, p, \epsilon)=0, \text{ for each }p \text{ and } \epsilon,
\endeq
such that for each small enough $\delta$, we have
\beq
V^{(d, k)}(\delta, p)\le \delta^{-\beta(K, p, \epsilon)-\epsilon-\Gamma_{d-1, k}(p)}+ C_{K, p, \epsilon} \log_K \frac{1}{\delta} \max_{\delta\le \delta'\le 1}(\delta/\delta')^{-\Gamma_{d-1, k}(p)-\epsilon} V^{(d, k)}(\delta', p, \nu_K).
\endeq
\end{thm}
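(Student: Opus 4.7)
The plan is to execute the Bourgain--Guth broad-narrow iteration, using the multi-linear decoupling constants $V^{(d,k)}(\cdot,p,\nu_K)$ of \eqref{0303e4.25} together with the inductive hypothesis for Theorem \ref{mainthm} in dimension $d-1$ (which is available once we fix $d\ge 2$). One round, passing from scale $1$ to $K^{-1}$, proceeds as follows. Decompose $[0,1]^d$ into the $K$-cubes $Col_K$ and, on each ball $B'\subset B$ of radius $K^k$, pigeonhole so that all ``significant'' $K$-cubes contribute within a factor of $K^{-Cd}$ of the maximum of $\|E^{(d,k)}_R g\|_{L^p_\sharp(w_{B'})}$. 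We then split into two scenarios. If there exist some $M\in[K,K^d]$ and $M$ significant $K$-cubes that are $\nu_K$-transverse in the sense of Definition \ref{0713fdefi1.2}, we are in the \emph{broad} case and control the contribution by $V^{(d,k)}(\cdot,p,\nu_K,M)\le V^{(d,k)}(\cdot,p,\nu_K)$. Otherwise, the failure of Definition \ref{0713fdefi1.2} produces a polynomial $P$ with $\deg P\le k^{100 d!}$ and $\|P\|=1$ such that the significant cubes cluster in $\{t:|P(t)|<\nu_K\}$; this is the \emph{narrow} case. Allowing $M$ to range over $[K,K^d]$ rather than fixing a single degree of multi-linearity is essential here.

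For the narrow case, the $\nu_K$-neighborhood of the real algebraic variety $\{P=0\}$ meets at most $O_K(K^{d-1})$ members of $Col_K$ (a Wongkew-type counting), and after a smooth local straightening of $\{P=0\}$ the relevant portion of $\mc{S}_{d,k}$ is realized, up to lower-order perturbations controlled by the Jacobian bound $\theta_{d,k}$ from the proof of Lemma \ref{0727lemma4.4h}, as a graph over the $(d-1)$-dim surface $\mc{S}_{d-1,k}$. Invoking the inductive hypothesis for Theorem \ref{mainthm} in dimension $d-1$ then yields a per-step narrow cost of $\lesim_\epsilon K^{\Gamma_{d-1,k}(p)+\epsilon}$, together with a $K$-dependent combinatorial constant from the covering.

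The final step is to iterate the one-round dichotomy at scales $K^{-j}$ for $j=0,1,\dots,N:=\log_K(1/\delta)$, using parabolic rescaling (Lemma \ref{abc18}) to renormalize each selected $K$-cube back to $[0,1]^d$ before the next round. If every round is narrow, we accumulate the factor $(K^{\Gamma_{d-1,k}(p)+\epsilon})^N=\delta^{-\Gamma_{d-1,k}(p)-\epsilon}$; absorbing the aggregate $K$-dependent losses from the $N$ rounds into a single $\delta^{-\beta(K,p,\epsilon)}$ with $\beta(K,p,\epsilon)\to 0$ as $K\to\infty$ produces the first term. If instead the first broad round occurs after $n$ narrow rounds, set $\delta':=\delta K^n\in[\delta,1]$; the accumulated narrow cost is $K^{n(\Gamma_{d-1,k}(p)+\epsilon)}=(\delta/\delta')^{-\Gamma_{d-1,k}(p)-\epsilon}$, and the broad round itself contributes $V^{(d,k)}(\delta',p,\nu_K)$ after parabolic rescaling. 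Taking the maximum over the $\log_K(1/\delta)$ possible transition scales $\delta'$ produces the second term.

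The hardest step will be the narrow case: quantitatively converting the statement that the significant cubes cluster near the low-degree $(d-1)$-variety $\{P=0\}$ into a usable $(d-1)$-dim decoupling inequality for the induced surface, and matching up the resulting exponent with $V^{(d-1,k)}$. This requires a careful polynomial-straightening argument with controlled Jacobian (related to the constants $\theta_{d,k}$ appearing in Lemma \ref{0727lemma4.4h}) together with bookkeeping to ensure that all $K$-dependent multiplicative losses coalesce into a single factor that vanishes in the large-$K$ limit. The broad case, the role of parabolic rescaling, and the iteration bookkeeping are comparatively routine given Definition \ref{0713fdefi1.2} and Lemma \ref{abc18}.
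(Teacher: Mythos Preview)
Your overall Bourgain--Guth framework (broad/narrow dichotomy, allowing $M$ to vary over $[K,K^d]$, parabolic rescaling, and the iteration bookkeeping) matches the paper's architecture, and your broad case is handled the same way.

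The narrow case, however, is where your proposal diverges from the paper, and your approach has a genuine gap. You propose to ``smoothly locally straighten'' $\{P=0\}$ and then view the relevant piece of $\mc{S}_{d,k}$ as a perturbed graph over $\mc{S}_{d-1,k}$. Two problems: first, $P$ has degree up to $k^{100d!}$ and its zero set can be singular, so a uniform smooth straightening with controlled Jacobian is not available in general. Second, and more seriously, the inductive decoupling hypothesis in dimension $d-1$ applies only to the specific polynomial surface $\mc{S}_{d-1,k}$, not to the image of $\mc{S}_{d,k}$ under an arbitrary diffeomorphism; affine-invariance (parabolic rescaling) holds for the very special coordinate changes in Lemma~\ref{abc18}, not for a nonlinear straightening map. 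Your appeal to the constant $\theta_{d,k}$ from Lemma~\ref{0727lemma4.4h} is misplaced: that constant concerns the Brascamp--Lieb transversality minors and plays no role in the narrow analysis.

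The paper avoids all of this by a purely combinatorial mechanism. In Proposition~\ref{2903p3.2} the narrow contribution is grouped at the coarser scale $K^{-1/k}$ (not $K^{-1}$), and Claim~\ref{2903claim3.3} together with Lemma~\ref{180227lemma5.4} splits the family of $K^{1/k}$-cubes meeting $\{P=0\}$ into $C(d,\deg P)$ sub-collections, each of \emph{multiplicity one} in some coordinate direction $e_{d'}$. For such a sub-collection one simply applies Fubini: freezing $t_{d'}$, the map $\Phi_{d,k}$ restricted to the remaining variables contains $\Phi_{d-1,k}$ exactly (no change of variables, no perturbation), so the $(d-1)$-dimensional decoupling hypothesis applies directly and yields the factor $K^{\Gamma_{d-1,k}(p)/k+\epsilon}$. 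Iterating Proposition~\ref{2903p3.2} then gives the theorem. If you want to repair your argument, replace the straightening step by this multiplicity-one/Fubini argument; it is both simpler and what actually makes the narrow case go through.
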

The proof of this Theorem is a variant of that of Theorem 5.7 in \cite{BDG-2}, which is built on the Bourgain-Guth argument. Theorem \ref{0723theorem4.6h} will be obtained by iterating the following Proposition \ref{2903p3.2}. This iteration has been standard, hence we leave it out.
\begin{prop}\label{2903p3.2}
For each $p\ge 2$, each $\epsilon>0$ and each $K\ge 1$, we have
\beq
\begin{split}
\label{abc3}
& \|E^{(d, k)}_{[0, 1]^d} g\|_{L^p(w_B)}\lesim_{\epsilon, p}  K^{1-\frac{2}{p}+\epsilon}(\sum_{R\in Col_K}\|\ek_{R}g\|_{L^p(w_{B})}^{p})^{1/p}\\
& +K^{\frac{\Gamma_{d-1, k}(p)}{k}+\epsilon}  (\sum_{\beta\in Col_{K^{1/k}}}\|\ek_{\beta}g\|_{L^p(w_{B})}^{p})^{1/p}\\
&+ K^{100k! d!} V^{(d, k)}(\delta, p, \nu_K) (\sum_{\Delta\in Col_{\delta^{-1}}}\|\ek_{\Delta}g\|_{L^p(w_{B})}^p)^{1/p}
\end{split}
\endeq
for each $B\subset \R^{n_d(k)}$ of radius $\delta^{-k}$ with $\delta<1/K$.
\end{prop}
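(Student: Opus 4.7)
The plan is to execute a Bourgain-Guth iteration at the coarse scale $K^{-1}$. First, cover the ball $B$ of radius $\delta^{-k}$ by sub-balls $B'$ of radius $K^{k}$; on each $B'$ the uncertainty principle renders $\|E^{(d,k)}_{R}g\|_{L^\infty_\sharp(B')}$ essentially constant for each $K$-cube $R\in Col_K$. Let $\mathcal{R}^*(B')\subset Col_K$ collect the ``significant'' cubes, namely those whose $L^\infty_\sharp$-average on $B'$ lies within a factor $K^{-C}$ (for some large $C=C(d,k)$) of the maximum. The contribution from non-significant cubes is absorbed into the $K^\epsilon$ factor, so it suffices to bound $|\sum_{R\in\mathcal{R}^*(B')}E_Rg(x)|$ on each $B'$ and aggregate.

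Apply the following iterative trichotomy to $\mathcal{R}^*(B')$. (i) If $\mathcal{R}^*(B')$ contains a $\nu_K$-transverse subcollection of size $M$ with $K\le M\le K^d$, extract it, bound the sum above by $M\cdot(\prod_{i=1}^M|E_{R_i}g(x)|)^{1/M}$ via AM-GM, and apply the multilinear decoupling constant $V^{(d,k)}(\delta,p,\nu_K,M)$; the combinatorial enumeration over configurations is absorbed into the factor $K^{100k!d!}$, producing the third term of \eqref{abc3}. (ii) Otherwise, by negating Definition \ref{0713fdefi1.2}, there exist a polynomial $P$ of degree $\le k^{100d!}$ with $\|P\|=1$ and a subcollection $\mathcal{R}^{**}\subset\mathcal{R}^*(B')$ with $|\mathcal{R}^{**}|\ge|\mathcal{R}^*(B')|/\Theta_{d,k}$ such that each $R\in\mathcal{R}^{**}$ meets $\{|P|<\nu_K\}$; handle $\mathcal{R}^{**}$ via lower-dimensional decoupling (see next paragraph), then iterate on $\mathcal{R}^*(B')\setminus\mathcal{R}^{**}$. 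Because a definite fraction is removed at each step, the iteration terminates after $O(\log K)$ rounds, contributing only a $\log_K$ factor absorbed into $K^\epsilon$. (iii) At termination only $O_K(1)$ cubes remain; control these via a trivial $\ell^2$-based orthogonality argument, which after converting from $\ell^2$ to $\ell^p$ via Hölder produces the first term $K^{1-2/p+\epsilon}(\sum_{R\in Col_K}\|E_R g\|^p_{L^p(w_B)})^{1/p}$.

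The crux is step (ii). Cover the $\nu_K$-neighborhood of $\{P=0\}\cap[0,1]^d$ by $K^{-1/k}$-cubes $\beta\in Col_{K^{1/k}}$; the scale $K^{-1/k}$ is chosen so that within each $\beta$ the Taylor error of $\Phi_{d,k}$ against its $k$-th order jet is $O(K^{-1})$, matching the dual wave-packet scale of $B'$. After a local smooth parametrization of $\{P=0\}\cap\beta$ as a graph over a $(d-1)$-dimensional domain, the restriction of $\Phi_{d,k}$ to this slice is a polynomial map of degree $\le k$ in $d-1$ variables whose image lies inside an affine image of $\mathcal{S}_{d-1,k}$. Applying the inductive hypothesis $V^{(d-1,k)}(K^{-1/k},p)\lesim K^{(\Gamma_{d-1,k}(p)+\epsilon)/k}$, followed by parabolic rescaling (Lemma \ref{abc18}) to descend from $K^{-1/k}$ down to $\delta$, produces the second term of \eqref{abc3}. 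The principal obstacle is precisely this reduction: one must show that the restricted surface $\Phi_{d,k}|_{\{P=0\}}$ is controlled stably by the $(d-1)$-dimensional Parsell-Vinogradov decoupling, uniformly in $K$ and even near singular points of $\{P=0\}$. This requires a Bezout-type bound (using $\deg P\le k^{100d!}$) on the number of branches of $\{P=0\}$ entering any given $K^{-1/k}$-cube together with an affine change of coordinates whose cost is absorbed into the $K^\epsilon$ factor; combined with the controlled termination of the iteration in (ii), these constitute the main technical content of the proposition.
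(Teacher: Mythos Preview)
Your Bourgain--Guth trichotomy matches the paper's architecture, but there is a genuine gap in step~(ii). (Also a minor slip: the sub-balls $B'$ should have radius $K$, not $K^{k}$, since the Fourier support $\Phi_{d,k}(R)$ for $R\in Col_K$ has diameter $O(K^{-1})$; this is exactly the spatial scale the paper uses.)

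Your mechanism for step~(ii) does not work as stated. You claim the restriction of $\Phi_{d,k}$ to $\{P=0\}$, locally graphed over $d-1$ variables, is ``a polynomial map of degree $\le k$ whose image lies inside an affine image of $\mathcal{S}_{d-1,k}$''. But $P$ may have degree up to $k^{100d!}$; the graph parametrization is then merely algebraic, and the composition $\Phi_{d,k}\circ(\text{graph})$ is not contained in any affine copy of $\mathcal{S}_{d-1,k}$. Linearizing the variety inside each $\beta$ does not rescue this either, because the tangent hyperplane varies with $\beta$ and the task is precisely to decouple the distinct $\beta$'s from one another, not to work within a single $\beta$.

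The paper handles this by a different, combinatorial route. Lemma~\ref{180227lemma5.4} (proved by induction on $d$ in the style of Wongkew, with the Oleinik--Petrovskii--Thom--Milnor bound on connected components of a real variety as input) shows that the $K^{1/k}$-cubes meeting $\{P=0\}$ split into $O_{d,\deg P}(1)$ sub-families, each of \emph{multiplicity one} along some coordinate axis $e_j$: every line parallel to $e_j$ meets at most one cube of the family. For such a family the projection $\beta\mapsto\beta'\subset[0,1]^{d-1}$ dropping the $t_j$-coordinate is injective on cubes; freezing the frequency variables dual to all monomials containing $t_j$ turns $\sum_\beta E_\beta g$, for each fixed value of those variables, into a genuine $\mathcal{S}_{d-1,k}$-extension over the disjoint $\beta'$, and one applies $V^{(d-1,k)}(K^{-1/k},p)$ followed by Fubini in the frozen variables. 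This is Claim~\ref{2903claim3.3}. So the Bezout-type input you anticipate is indeed present, but it feeds a projection-plus-Fubini reduction rather than a restriction to the variety.
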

\begin{proof}[Proof of Proposition \ref{2903p3.2}]
We start by writing
\beq
E^{(d, k)}_{[0, 1]^d}g=\sum_{R\in Col_K} \ek_{R}g.
\endeq
By the uncertainty principle, on each ball $B_K$ of radius $K$, the function $|\ek_R g|$ is essentially a constant. We use $|\ek_R g(B_K)|$ to denote this constant, and we write
$|\ek_R g(x)|\approx |\ek_R g(B_K)|$ for $x\in B_K$. The reader is invited to consult \cite{Li17} for a rigorous argument. We temporarily fix $B_K$. Denote by $R^*=R^*(B_K)$ the cube that maximises $|\ek_{R}g(B_K)|$. Let $Col_K^*$ be those cubes  $R\in Col_K$ such that
\beq\label{0705e1.31}
|\ek_R g(B_K)|\ge K^{-10d} |\ek_{R^*}g(B_K)|.
\endeq
There are no particular reasons why we used $K^{-10d}$ on the right hand side. It can also be $K^{-100d}$ or even smaller.
Initialise
$$
STOCK=Col^*_K
$$
We repeat the following algorithm. Throughout the algorithm, $STOCK$ will always be a subset of $Col^*_K$.
\medskip

If $|STOCK|\le 10 K$, then the algorithm terminates. We can write on each $x\in B_K$
\beq
\begin{split}
|\ek_{[0, 1]^d} g(x)|&=|\sum_{R\in Col_K} \ek_{R}g(x)|\\
& \lesim \max_R|\ek_{R}g(B_K)|+ |\sum_{R\in Col^*_K} E_R^{(d, k)}g(x)|.
\end{split}
\endeq
We integrate both sides on $B_K$ and apply an $L^2$ orthogonality argument, to obtain
\beq
\|\ek_{[0, 1]^d} g\|_{L^p(w_{B_K})}\lesim K^{1-\frac{2}{p}}(\sum_{R\in Col_K}\|\ek_{R}g\|_{L^p(w_{B_K})}^{p})^{1/p}.
\endeq
We raise both sides to the power $p$ and sum over a finitely overlapping cover of $B$ using balls $B_K$ to recover the desired \eqref{abc3}.

\medskip

If $M:=|STOCK|\ge 10 K$ and if   for every given polynomial $Q(t)$ with $\|Q\|=1$ of degree less than $k^{100 d!}$,  at most $\left[\frac{M}{\Theta_{d, k}}\right]$ of the cubes in $STOCK$ intersect the $\frac{10}{K}$ neighborhood of the zero set of $Q$, then the algorithm terminates.  Here $\Theta_{d, k}$ is the large constant given in Definition \ref{0713fdefi1.2}. Note first that in this case the cubes in $STOCK$ are $\nu_K-$transverse for some $\nu_K>0$.
Thus, by \eqref{0705e1.31} and the triangle inequality, we have for $x\in B_K$
\beq\label{180302e5.10}
|\ek_{[0, 1]^d}g(x)| \le K^d\max|\ek_{R}g(B_K)|\le K^{20d} \Big(\prod_{i=1}^{M}|\ek_{R_i}g(B_K)| \Big)^{1/M}.
\endeq
Integrating on $B_K$, then raising to the power $p$, summing over $B_K$ as before, and applying the definition of the multi-linear decoupling inequality as in \eqref{0303e4.25} lead to the inequality \eqref{abc3}.

\medskip

\medskip

In the end, we assume that $M:=|STOCK|\ge 10 K$ and that there is a polynomial $Q(t)$ of degree less than $k^{100d!}$, and a subset $\mc{G}\subset STOCK$ with at least $\left[\frac{M}{\Theta_{d, k}}\right]+1$  cubes, each of which  intersects the $\frac{10}{K}$ neighborhood of the zero set of $Q$.
We denote by $\mc{G}_{K^{\frac{1}{k}}}$ the collection of the cubes $\beta$ from $Col_{K^{\frac{1}{k}}}$ which contain at least one element from $\mc{G}$. Note that each cube in $\mc{G}_{K^{\frac{1}{k}}}$ will be inside the $10K^{-\frac{1}{k}}$ neighbourhood of the zero set of  $Q$. We write
\beq
\label{abcd1}
|\ek_{[0,1]^d}g|\le |\sum_{\beta\in \mc{G}_{K^{\frac{1}{k}}}} \ek_{\beta}g|+|\sum_{\beta\notin\mc{G}_{K^{\frac{1}{k}}}}\sum_{R\in Col_K: R\subset \beta} \ek_{\beta}g|.
\endeq
We reset the value
$$STOCK:=STOCK\setminus\{R\in Col^*_K:\;R\subset \beta,\text{ for some }\beta\in \mc{G}_{K^{\frac{1}{k}}}.\}$$
and repeat the algorithm.
\medskip

It is not difficult to see that this algorithm can only be repeated for at most $O(\log K)$ times. Each repetition will add another term to the sum \eqref{abcd1}. Each such term will be estimated using the following result.
\begin{claim}\label{2903claim3.3}
Let $K$ be  a large number.
Let $P$ be a polynomial of $d$ variables with degree smaller than $k^{100d!}$ and $\|P\|=1$. Let $S$ denote the zero set of the polynomial $P$ that lies inside $[0, 1]^d$. Then for each $p\ge 2$, we have
\beq\label{0705e1.37}
\left\|\sum_{\beta\in Col_{K^{\frac{1}{k}}}: \beta\cap S\neq \emptyset} E^{(d, k)}_{\beta}g\right\|_{L^p(B_K)} \lesim K^{\frac{\Gamma_{d-1, k}(p)}{k}+\epsilon} \left(\sum_{\beta\in Col_{K^{\frac{1}{k}}}: \beta\cap S\neq \emptyset} \|E^{(d, k)}_{\beta}g\|_{L^p(B_K)}^{p} \right)^{\frac{1}{p}},
\endeq
for each small constant $\epsilon>0$.
\end{claim}

It remains to prove Claim \ref{2903claim3.3}. For a large integer $Z$, for $1\le d'\le d$, and a collection of $Z$-cubes, called $\mc{C}_Z$, we define the $d'$-multiplicity of $\mc{C}_Z$ to be the maximal number of cubes from $\mc{C}_Z$ that a line parallel to the $d'$-th coordinate axis can pass through. We use $\mc{M}_{d'}(\mc{C}_Z)$ to denote the $d'$-multiplicity of the collection $\mc{C}_Z$. Moreover, define the multiplicity $\mc{M}(\mc{C}_Z)$ of the collection $\mc{C}_Z$ by
\beq
\mc{M}(\mc{C}_Z):=\min_{1\le d'\le d} \mc{M}_{d'}(\mc{C}_Z).
\endeq
\begin{lem}\label{180227lemma5.4}
Let $Z$ be a large integer. Let $P$ be a polynomial of $d$ variables with $\|P\|=1$. Let $S$ denote the zero set of $P$ that lies in $[0, 1]^d$. Let $\mc{C}_Z$ denote the collection of all $Z$-cubes $\beta$ such that $2\beta \cap S\neq \emptyset$. Then $\mc{C}_Z$ can be split into $C(d, \text{deg}(P))$ many disjoint collections, each of which is of multiplicity one.

Here we use $C(d, \text{deg}(P))$ to denote a constant that depends only on $d$ and the degree of $P$. Moreover, for a constant $C>1$, we use $C\beta$ to mean the cube of the same center as $\beta$ but of side-length $C$ times the side-length of $\beta$.
\end{lem}

We postpone the proof of Lemma \ref{180227lemma5.4} until the end of this section, and first finish the proof of Claim \ref{2903claim3.3}. By applying Lemma \ref{180227lemma5.4} to the collection of cubes $\beta\in Col_{K^{\frac{1}{k}}}$ with $\beta \cap S\neq \emptyset$, we obtain $C(d, \text{deg}(P))$ many disjoint collections of $K^{\frac{1}{k}}$-cubes, each of which is of multiplicity one. For each such a collection, the corresponding \eqref{2903claim3.3} can be proven easily by applying Fubini's theorem and already established decoupling inequalities for the surface $\mc{S}_{d-1, k}$. This finishes the proof of Claim \ref{2903claim3.3}.
\end{proof}

\begin{proof}[Proof of Lemma \ref{180227lemma5.4}.]
The proof is via an induction on the dimension $d$. We learnt this idea from Wongkew \cite{Won}. When $d=1$, the proof is trivial. Suppose we have proven Lemma \ref{180227lemma5.4} for all $d\in \{1, 2, \dots, D\}$. Now take $d=D+1$. Denote $\zeta=Z^{-1}$. On the unit cube $[0, 1]^{D+1}$, draw the $\zeta$-separated lattice points, that is, points of the form
\beq
(k_1 \zeta, \dots, k_{D+1}\zeta) \text{ with } k_1, \dots, k_{D+1}\in \{0, 1, \dots, Z\}.
\endeq
Let $\mc{H}$ be the collection of all hyperplanes that are parallel to one coordinate plane and contain at leat one $\zeta$-separated lattice point. Without loss of generality, we assume that our polynomial $P$ does not vanish identically on any hyperplane in $\mc{H}$, as otherwise we can apply an extremely small perturbation to $P$.

We apply the following algorithm. Initialise
\beq
\iota=1 \text{ and } S=\{t\in [0, 1]^{D+1}: P(t)=0\}.
\endeq
Consider $\mc{H}_{\iota}$, the collection of all hyperplanes in $\mc{H}$ that are perpendicular to the $\iota$-th coordinate axis $e_{\iota}$. The polynomial $P$ restricted to a hyperplane $H_{\iota}\in \mc{H}_{\iota}$, denoted by $P\large|_{H_\iota}$, is a non-zero polynomial of degree $\le \text{deg}(P)$. Denote by $Col_Z(H_{\iota})$ the collection of all the $Z$-cubes (of dimension $D+1$) that have non-empty intersection with $S\cap H_{\iota}$. We apply our induction hypothesis to $P\large|_{H_{\iota}}$, and obtain that $Col_Z(H_{\iota})$ can be split into at most $C(D, \text{deg}(P))$ many sub-collections, each of which is of multiplicity one. This further implies that
\beq
\bigcup_{H_{\iota}\in \mc{H}_{\iota}} Col_{Z}(H_{\iota})
\endeq
can be split into at most $2C(D, \text{deg}(P))$ many sub-collections, each of which is of multiplicity one. Update
\beq
\iota=\iota+1 \text{ and } S=S\setminus \Big(\cup_{H_{\iota}\in \mc{H}_{\iota}} Col_{Z}(H_{\iota})\Big).
\endeq
This algorithm will terminate either when $\iota=D+2$ or when $S=\emptyset$. \\

After the above algorithm terminates: If we are in the case $S=\emptyset$, then we can take
\beq
C(D+1, \text{deg}(P))=2(D+1)C(D, \text{deg}(P)).
\endeq
If we are in the case $\iota=D+2$, then the remaining zero set $S$ may still not be empty. However, we must have
\beq
S\cap \Big(\bigcup_{\iota=1}^{D+1}\bigcup_{H_{\iota}\in \mc{H}_{\iota}} H_{\iota} \Big)=\emptyset.
\endeq
This implies every connected component of $S$ must live in the interior of a $Z$-cube. A classic result in real algebraic geometry due to Oleinik and Petrovskii \cite{OP49}, Thom \cite{Thom65} and Milnor \cite{Mil64} says that the number of connected components can be bounded by a constant depending only on $D+1$ and $\text{deg}(P)$. Hence we can take $C(D+1, \text{deg}(P))$ to be the sum of $2(D+1)C(D, \text{deg}(P))$ and such an upper bound. This finishes the proof of Lemma \ref{180227lemma5.4}.
\end{proof}

\hspace{2cm}

\section{\bf Proof of the main theorem: The case of small $p$}

In this section, we focus on the case
\beq
p\le \frac{2n_d(k)}{n_d(1)}.
\endeq
This is the relatively easier case, compared with the case of $p$ being large. In the previous section we controlled the linear decoupling constant using the multi-linear ones.
This will allow us to apply Bourgain's multi-linear argument from \cite{Bou13}, multi-linear restriction estimates due to Bennett, Carbery and Tao \cite{BCT} and Bennett, Bez, Flock and Lee \cite{BBFL}, to conclude the desired linear decoupling inequality in Theorem \ref{mainthm}. \\

Recall that in Theorem \ref{0723theorem4.6h} we prove that, for every large integer $K$ and every small $\epsilon>0$, there exists $\Omega_{K, p, \epsilon}>0$ and $\beta(K, p, \epsilon)>0$ with
\beq
\lim_{K\to \infty} \beta(K, p, \epsilon)=0, \text{ for each }p \text{ and } \epsilon,
\endeq
such that for each small enough $\delta$, we have
\beq\label{180207e6.3}
V^{(d, k)}(\delta, p)\le \delta^{-\beta(K, p, \epsilon)-\Gamma_{d-1, k}(p)-\epsilon}+C_{K, p, \epsilon} \log_K \frac{1}{\delta} \max_{\delta\le \delta'\le 1}(\delta/\delta')^{-\Gamma_{d-1, k}(p)-\epsilon} V^{(d, k)}(\delta', p, \nu_K).
\endeq
We will prove that for each $2\le p\le \frac{2\cdot n_d(k)}{n_d(1)}$, it holds that
\beq\label{180207e6.4}
V^{(d, k)}(\delta, p, \nu_K)\lesim_{K, \epsilon}\left( \frac{1}{\delta}\right)^{d(\frac{1}{2}-\frac{1}{p})+\epsilon}.
\endeq
This, combined with \eqref{180207e6.3}, will imply
\beq\label{180207e6.5}
V^{(d, k)}(\delta, p)\le \delta^{-\beta(K, p, \epsilon)-\Gamma_{d-1, k}(p)-\epsilon}+ C_{K, p, \epsilon} \log_K \frac{1}{\delta} \max_{\delta\le \delta'\le 1}(\delta/\delta')^{-\Gamma_{d-1, k}(p)-\epsilon} (\delta')^{-d(\frac{1}{2}-\frac{1}{p})-\epsilon}.
\endeq
There are two cases:
\beq
\Gamma_{d-1, k}(p)\ge d(\frac{1}{2}-\frac{1}{p}) \text{ and } \Gamma_{d-1, k}(p)< d(\frac{1}{2}-\frac{1}{p})
\endeq
In the former case, \eqref{180207e6.5} becomes
\beq
V^{(d, k)}(\delta, p)\lesim_{K, p, \epsilon} \delta^{-\beta(K, p)-\Gamma_{d-1, k}(p)-\epsilon}.
\endeq
In the latter case, it becomes
\beq
V^{(d, k)}(\delta, p)\lesim_{K, p, \epsilon} \left( \log_{K}\frac{1}{\delta}\right)\delta^{-d(\frac{1}{2}-\frac{1}{p})-\epsilon}.
\endeq
In either case, for every given small $\epsilon>0$, we can always choose $K$ large enough so that
\beq
V^{(d, k)}(\delta, p)\lesim_{p, \epsilon} \delta^{-\max\{\Gamma_{d-1, k}(p), d(\frac{1}{2}-\frac{1}{p})\}-\epsilon}.
\endeq
Under the assumption that $p\le 2n_d(k)/n_d(1)$, it is always the case that
\beq
\max\{\Gamma_{d-1, k}(p), d(\frac{1}{2}-\frac{1}{p})\}=\Gamma_{d, k}(p).
\endeq
This finishes the proof of the desired linear decoupling estimate. \\

What remains is to prove \eqref{180207e6.4}. By \eqref{0303e4.25}, it suffices to prove that
\beq\label{180207e6.11}
V^{(d, k)}(\delta, p, \nu_K, M)\lesim_{K, \epsilon}\left( \frac{1}{\delta}\right)^{d(\frac{1}{2}-\frac{1}{p})+\epsilon}, \text{ for every } K\le M\le K^d.
\endeq
Recall that $V^{(d, k)}(\delta, p, \nu_K, M)$ is the smallest constant such that
\beq
\left\|\left(\prod_{i=1}^{M} E^{(d, k)}_{R_i} g\right)^{\frac{1}{M}}\right\|_{L^p(w_B)}\le V^{(d, k)}(\delta, p, \nu_K, M) \prod_{i=1}^{M} \left(\sum_{J\subset R_i; l(J)=\delta} \|E^{(d, k)}_J g\|_{L^p(w_B)}^{p} \right)^{\frac{1}{p\cdot M}}.
\endeq
Here $B\subset \R^n$ is a ball of radius $\delta^{-k}$, and $R_1, ..., R_{M}$ are $\nu_K$-transverse cubes from $Col_K$. Denote $p_c=\frac{2n_d(k)}{n_d(1)}$. We will prove
\beq\label{180207e6.13}
\left\|\left(\prod_{i=1}^{M} E^{(d, k)}_{R_i} g\right)^{\frac{1}{M}}\right\|_{L^{p_c}(w_B)}\lesim \left(\frac{1}{\delta} \right)^{d(\frac{1}{2}-\frac{1}{p_c})+\epsilon} \prod_{i=1}^{M} \left(\sum_{J\subset R_i; l(J)=\delta} \|E^{(d, k)}_J g\|_{L^{p_c}(w_B)}^{p_c} \right)^{\frac{1}{p_c\cdot M}}.
\endeq
By interpolation, this, combined with the trivial decoupling inequality at $p=2$,
\beq
V^{(d, k)}(\delta, 2, \nu_K, M) \lesim 1,
\endeq
implies \eqref{180207e6.11}. We refer to Bourgain and Demeter \cite{BD1} for such an interpolation argument. In particular, it relies on the so-called ``balanced functions" and on a pigeonholing argument. \\

It remains to prove \eqref{180207e6.13}. Under the assumptions on $R_1, \dots, R_M$, Bennett, Bez, Flock and Lee \cite{BBFL} proved that
\beq
\left\|\left(\prod_{i=1}^{M} E^{(d, k)}_{R_i} g\right)^{\frac{1}{M}}\right\|_{L^{p_c}(w_B)} \lesim_{K, \epsilon} \delta^{-\epsilon} \prod_{i=1}^M \|g|_{R_i}\|_2^{\frac{1}{M}}.
\endeq
See Theorem 1.3 there. By Plancherel's theorem, and by a simple localisation argument, we obtain
\beq
\left\|\left(\prod_{i=1}^{M} E^{(d, k)}_{R_i} g\right)^{\frac{1}{M}}\right\|_{L^{p_c}(w_B)} \lesim_{K, \epsilon} \delta^{-\epsilon} \delta^{k(n-d)/2} \prod_{i=1}^M \|E^{(d, k)}_{R_i} g\|^{\frac{1}{M}}_{L^2(w_B)}.
\endeq
Recall that $d=n_d(1)$ is the dimension of the surface, and $n=n_d(k)$ is the total dimension of the space that our surface lives in. By $L^2$ orthogonality, the right hand side of the last display is further comparable to
\beq\label{180207e6.17}
\delta^{-\epsilon} \delta^{k(n-d)/2} \prod_{i=1}^M\left(\sum_{J\subset R_i; l(J)=\delta} \|E^{(d, k)}_{J} g \|^2_{L^{2}(w_B)} \right)^{\frac{1}{2M}}
\endeq
In the end, we apply H\"older's inequality to bound \eqref{180207e6.17} by
\beq
\begin{split}
& \delta^{-\epsilon} \prod_{i=1}^M \left(\sum_{J\subset R_i; l(J)=\delta} \|E^{(d, k)}_{J} g \|^2_{L^{p_c}(w_B)} \right)^{\frac{1}{2M}}\\
& \lesim \delta^{-\epsilon} \delta^{-d(\frac{1}{2}-\frac{1}{p_c})}\prod_{i=1}^M \left(\sum_{J\subset R_i; l(J)=\delta} \|E^{(d, k)}_{J} g \|^{p_c}_{L^{p_c}(w_B)} \right)^{\frac{1}{M p_c}}.
\end{split}
\endeq
This finishes the proof of \eqref{180207e6.13}, thus the proof of the desired decoupling for $2\le p\le \frac{2n_d(k)}{n_d(1)}$.

\hspace{2cm}

\section{\bf An iteration argument: The case of large $p$}\label{section-large-p}

In this section, we deal with the case
\beq
p> \frac{2n_d(k)}{n_d(1)},
\endeq
which we assume throughout the whole section. For fixed $d$ and $k$,  we define
\beq\label{key-index}
p_{d}(k):=\frac{2\mc{K}_{d, k}}{d}.
\endeq
This exponent is determined by letting
\beq
(\frac{1}{2}-\frac{1}{p})d= (1-\frac{1}{p}) d-\frac{\mc{K}_{d, k}}{p}.
\endeq
These two terms are separately the first and last terms on the right hand side of \eqref{conj}. \\

The desired decoupling inequalities \eqref{conj} will be proven via an iteration argument in the spirit of \cite{BDG} (the case $d=1$ and $k\ge 1$) and \cite{BDG-2} (the case $d=2$ and $k= 3$). However the scenarios in \cite{BDG} and \cite{BDG-2} are relatively simpler, because in the case $d\in \{1, 2\}$, for every $k\ge 1$, there is only one critical exponent for \eqref{conj}, given by $p=p_d(k)$. Once the desired bound \eqref{conj} is proven for $p=p_d(k)$, by interpolations with trivial bounds at $p=2$ and $p=\infty$, everything else follows.

For $d\ge 3$, there are about $d/2$ many critical exponents, and to conclude Theorem \ref{mainthm}, we need to prove sharp decoupling inequalities at all these critical exponents. Unfortunately, the distribution of these critical exponents is not even entirely clear to us. Indeed, we do no even understand very well how many these exponents there are.

\begin{figure}[h]
    \centering
    \includegraphics[scale=1]{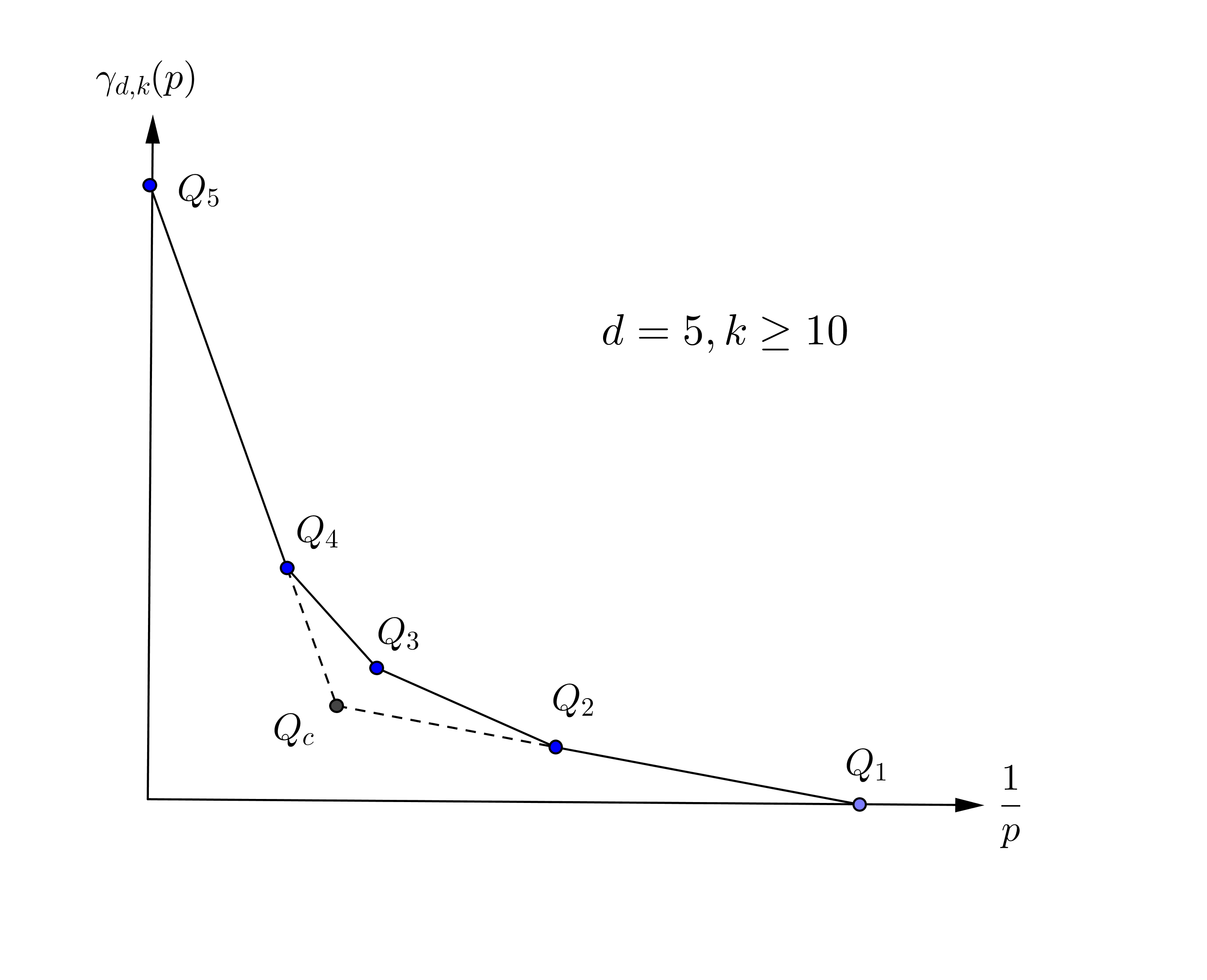}
    \caption{Kink points}
    \label{kink-points}
\end{figure}

In Figure \ref{kink-points}, we take the case of dimension $d=5$ and degree $k$ being large ($k\ge 10$ is enough). The graph of $\Gamma_{d, k}(p)$ as a function $1/p$ is given by the solid line segments $Q_1 Q_2 Q_3 Q_4 Q_5$. The kink points $Q_2, Q_3$ and $Q_4$ give rise to three critical exponents. \\

In the present paper, we propose to ``ignore" all these critical exponents. Instead, we choose the superficially more complicated approach, which is to prove the desired bound \eqref{conj} at each individual $p\ge 2$ separately, without appealing to interpolations. Simply speaking, the reason of choosing this approach is that, we believe all these so-called ``critical exponents" are indeed very misleading. They do not play any role, other than the negative role of making our main theorem more complicated to prove.

However there is one exponent that plays a key role, which is the one given by \eqref{key-index}. Unfortunately for almost all combinations of $d$ and $k$, the exponent $p_d(k)$ is given by a ``fake" kink point, which makes it more difficult for us to discover it and realise its important role. See Figure \ref{kink-points}. We extend line segments $Q_1 Q_2$ and $Q_5Q_4$ and let them meet at the point $Q_c$. The horizontal coordinate of $Q_c$ is exactly $1/p_{d}(k)$.

 The first major difficulty one confronts when applying such an approach is how to choose various indices (see $q_{d, k}(l)$ in \eqref{various-index} and Figure \ref{figure-tree}) to run the iteration argument, as illustrated by Figure \ref{figure-tree}.  The key role that $p_d(k)$ plays is that it can be used to determine the choice of these indices in a very clean way, see $q_{d, k}(l)$ in \eqref{various-index}. This will be explained in further details when we start running the iteration argument.\\

 To explain the idea of the iteration steps, we take the example of the case $(d, k)=(2, 4)$, which is the first unknown case. This case may not be that typical at first sight, as it only admits one critical exponent. However, let us pretend that we do not know this fact, and for the purpose of explaining the idea of the iteration, this case is already good enough.\\

We first introduce the terms that will appear in our iteration argument. For given $d$ and $k$,  define
\beq\label{various-index}
q_{d, k}(l):=\max\{2, \frac{p\cdot p_{d}(l)}{p_{d}(k)}\}, \text{ with } l=\{1, 2, \dots, k-1\}.
\endeq
For a positive number $r$, we use $B^r$ to denote a ball of radius $\delta^{-r}$. Let $K$ be a large integer. Let $M$ be an integer in the interval $[K, K^d]$. Let $R_1, \dots, R_M$ be cubes from $Col_K$ that are $\nu_K$-transverse. Here $\nu_K$ is the constant that appeared in the Bourgain-Guth argument (see the line above \eqref{180302e5.10}). It depends only on $K$. Define
\beq\label{D-term-0309}
D_t (q, B^r):=\Big( \prod_{i=1}^M \sum_{J_{i, q}\subset R_i}\|E^{(d, k)}_{J_{i, q}} g\|^{t}_{L^t_{\#}(w_{B^r})} \Big)^{\frac{1}{t M}}
\endeq
and
\beq
A_{p} (q, B^r, s)=\Big( \frac{1}{|\mc{B}_s(B^r)|} \sum_{B^s\in \mc{B}_s(B^r)} D_{q_{d, k}(1)}(q, B^s)^{p} \Big)^{1/p}.
\endeq
Here $\mc{B}_s(B^r)$ denotes a finitely overlapping collection of balls $B^s$ that lie inside of a ball $B^r$. In the notation $J_{i, q}$, the index $i$ indicates that this cube lies in $R_i$, and $q$ indicates that the cube $J_{i, q}$ has side length $\delta^{q}$.

Terms similar to $D_t(q, B^r)$ and $A_p(q, B^r, s)$ with the same names already appeared in both \cite{BDG} and \cite{BDG-2}. In \cite{BDG}, the term $D_t(q, B^r)$ is defined to be
\beq\label{180202e6.8}
\Big( \prod_{i=1}^M \sum_{J_{i, q}\subset R_i}\|E^{(d, k)}_{J_{i, q}} g\|^{2}_{L^t_{\#}(w_{B^r})} \Big)^{\frac{1}{2 M}}.
\endeq
The reason of using an $l^2$ sum is that, in \cite{BDG} the sharp $l^2$-decoupling inequality associated with the one dimensional curve $\mc{S}_{1, k}$ is still available and useful, for every $k\ge 2$. When dimension $d$ is bigger than one, sharp $l^2$-decoupling inequalities will no long be able to imply sharp bounds on numbers of integer solutions of Parsell-Vinogradov systems.

In the case of dimension $d>1$, instead of an $l^2 L^{p}$ decoupling, we will prove an $l^{p}L^{p}$ decoupling, as stated in \eqref{2305e1.20}. Hence it is very tempting to define $D_t(q, B^r)$ to be
\beq\label{180202e6.9}
\Big( \prod_{i=1}^M \sum_{J_{i, q}\subset R_i}\|E^{(d, k)}_{J_{i, q}} g\|^{p}_{L^t_{\#}(w_{B^r})} \Big)^{\frac{1}{p M}}.
\endeq
Using this term requires us to prove the following variant of the ball-inflation inequalities in Lemma \ref{0713flemma1.6} and \eqref{0727e4.14hh}
\beq\label{0727e4.14hhh}
\begin{split}
& \frac{1}{|\mc{B}|} \sum_{\Delta\in \mc{B}}\left[ \prod_{i=1}^M\left( \sum_{J_i\subset R_i, l(J_i)=\rho} \|E^{(d, k)}_{J_i}g\|_{L^{\frac{p\cdot n_d(l)}{n_d(k)}}_{\#}(w_{\Delta})}^{p} \right)^{1/p} \right]^{p/M}\\
& \lesim \rho^{-\epsilon} \left[ \prod_{i=1}^M\left( \sum_{J_i\subset R_i, l(J_i)=\rho} \|E^{(d, k)}_{J_i}g\|_{L^{\frac{p\cdot n_d(l)}{n_d(k)}}_{\#}(w_B)}^{p} \right)^{1/p} \right]^{p/M}.
\end{split}
\endeq
However, counter-examples show that \eqref{0727e4.14hhh} is wrong, which prevents us from iterating \eqref{180202e6.9} in the forthcoming iteration argument. As an alternative for both \eqref{180202e6.8} and \eqref{180202e6.9}, we propose to iterate \eqref{D-term-0309}.\\

Before we start the first step of the iteration argument, we collect a few lemmas that will be used several times there.
Moreover, define $\alpha_l$ and $\beta_l$ such that
\beq
\frac{1}{\frac{n_d(l)}{n_d(k)}}=\frac{\alpha_l}{\frac{n_d(l+1)}{n_d(k)}}+\frac{1-\alpha_l}{\frac{p_d(l)}{p_d(k)}}.
\endeq
and
\beq
\frac{1}{\frac{p_d(l)}{p_d(k)}}=\frac{1-\beta_l}{\frac{p_{d}(l-1)}{p_d(k)}}+\frac{\beta_l}{\frac{n_d(l)}{n_d(k)}}.
\endeq

\begin{rem}
The exponents $\{\alpha_l\}$ and $\{\beta_l\}$ are chosen such that the infinite sum \eqref{abc42} holds true. This identity is the most important algebraic identity in the paper. It guarantees the whole iteration to work.
\end{rem}

\begin{lem}[the First H\"older inequality] \label{first-holder}
For given $d, k\ge 1$ and $1\le l\le k-1$, we have
\beq\label{1120e6.9}
D_{\frac{n_d(l)p}{n_d(k)}}(1, B^{l+1}) \lesim \left( \frac{1}{\delta}\right)^{d(1-\alpha_l)(\frac{p_d(k)}{p\cdot p_d(l)}-\frac{1}{q_{d, k}(l)})} D^{\alpha_l}_{\frac{n_d(l+1)p}{n_d(k)}}(1, B^{l+1}) \times D^{1-\alpha_l}_{q_{d, k}(l)}(1, B^{l+1}).
\endeq
\end{lem}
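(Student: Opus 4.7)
The plan is a two-step H\"older argument applied factor by factor in $i$. Since $D_{t}(1,B^{l+1})=(\prod_{i=1}^{M}T^{(i)}_{t})^{1/M}$ with $T^{(i)}_{t}:=(\sum_{J_{i,1}\subset R_{i}}\|E^{(d,k)}_{J_{i,1}}g\|_{L^{t}_{\#}(w_{B^{l+1}})}^{t})^{1/t}$, the claim reduces (by taking the geometric mean in $i$) to the per-$i$ bound $T^{(i)}_{a}\lesim \delta^{-d(1-\alpha_{l})(\frac{p_{d}(k)}{p\,p_{d}(l)}-\frac{1}{q_{d,k}(l)})}(T^{(i)}_{b})^{\alpha_{l}}(T^{(i)}_{q_{d,k}(l)})^{1-\alpha_{l}}$, where $a:=n_{d}(l)p/n_{d}(k)$ and $b:=n_{d}(l+1)p/n_{d}(k)$.

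Next, I introduce the auxiliary exponent $c':=p\,p_{d}(l)/p_{d}(k)$, in terms of which the defining equation for $\alpha_{l}$ reads $\frac{1}{a}=\frac{\alpha_{l}}{b}+\frac{1-\alpha_{l}}{c'}$ (with $\alpha_{l}\in[0,1]$, as follows from $p_{d}(l)/n_{d}(l)$ being strictly monotone in $l$). For each cube $J$, the log-convexity of $p\mapsto\log\|E^{(d,k)}_{J}g\|_{L^{p}_{\#}}$, i.e.\ plain H\"older on an averaged integral, gives $\|E^{(d,k)}_{J}g\|_{L^{a}_{\#}}\le \|E^{(d,k)}_{J}g\|_{L^{b}_{\#}}^{\alpha_{l}}\|E^{(d,k)}_{J}g\|_{L^{c'}_{\#}}^{1-\alpha_{l}}$. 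Raising this to the $a$-th power, summing over $J\subset R_{i}$, and applying H\"older on the finite sum with conjugate exponents $b/(a\alpha_{l})$ and $c'/(a(1-\alpha_{l}))$ (which are dual precisely by the same identity for $\alpha_{l}$), I obtain, after taking the $a$-th root, $T^{(i)}_{a}\le (T^{(i)}_{b})^{\alpha_{l}}(\sum_{J}\|E^{(d,k)}_{J}g\|_{L^{c'}_{\#}}^{c'})^{(1-\alpha_{l})/c'}$.

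It remains to match $c'$ with the prescribed $q_{d,k}(l)=\max\{2,c'\}$. In the regime $c'\ge 2$ we have $q_{d,k}(l)=c'$ and the bound is already the claimed one, with loss factor $\delta^{0}=1$. In the regime $c'<2$ we have $q_{d,k}(l)=2$; I would then use (i) the monotonicity $\|E^{(d,k)}_{J}g\|_{L^{c'}_{\#}}\le \|E^{(d,k)}_{J}g\|_{L^{2}_{\#}}$ (averaged $L^{p}_{\#}$ norms are non-decreasing in $p$), cube by cube, and (ii) one extra H\"older on the finite $\ell^{c'}$ sum over the $N\lesim \delta^{-d}$ cubes $J\subset R_{i}$ to convert it into an $\ell^{2}$ sum, at cost $N^{1/c'-1/2}$. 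Combining (i) and (ii) yields the loss $\delta^{-d(1-\alpha_{l})(1/c'-1/2)}=\delta^{-d(1-\alpha_{l})(p_{d}(k)/(p\,p_{d}(l))-1/q_{d,k}(l))}$, which matches the claim exactly.

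No genuine obstacle is expected; the entire proof reduces to two applications of H\"older. The only noteworthy point is the design of the unified loss factor: the definition $q_{d,k}(l)=\max\{2,p\,p_{d}(l)/p_{d}(k)\}$ is engineered precisely so that the loss vanishes in the ``H\"older-regular'' regime $c'\ge 2$ and equals the cost of the $\ell^{c'}\to\ell^{2}$ conversion in the complementary regime, which allows the two cases to be stated as a single inequality without a case split in the final formula.
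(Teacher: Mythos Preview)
Your proposal is correct and follows essentially the same two-step H\"older approach as the paper: first interpolate between the exponents $n_d(l+1)p/n_d(k)$ and $c':=p\,p_d(l)/p_d(k)$ using the defining relation for $\alpha_l$, and then, in the regime $c'<2$, upgrade $D_{c'}$ to $D_{q_{d,k}(l)}=D_2$ at the cost of the stated $\delta$-power. The only difference is expository: the paper states each step as a single ``standard H\"older'' application, whereas you unpack the first step into a per-cube $L^{a}_{\#}$--$L^{b}_{\#}$--$L^{c'}_{\#}$ log-convexity bound followed by discrete H\"older over $J$, and the second step into monotonicity of averaged norms plus an $\ell^{c'}\to\ell^{2}$ H\"older on the $\lesim\delta^{-d}$ many cubes.
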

\begin{proof}[Proof of Lemma \ref{first-holder}.]
In the case $\frac{p\cdot p_d(l)}{p_d(k)}\ge 2$, the desired estimate follows immediately from the standard H\"older inequality. In the other case, we first apply the standard H\"older inequality to obtain
\beq\label{1120e6.10}
D_{\frac{n_d(l)p}{n_d(k)}}(1, B^{l+1}) \lesim D^{\alpha_l}_{\frac{n_d(l+1)p}{n_d(k)}}(1, B^{l+1}) \times D^{1-\alpha_l}_{\frac{p\cdot p_d(l)}{p_d(k)}}(1, B^{l+1}).
\endeq
Next, we apply one more step of H\"older to the second term on the right hand side of the last expression,
\beq
D_{\frac{p\cdot p_d(l)}{p_d(k)}}(1, B^{l+1})\lesim \left( \frac{1}{\delta}\right)^{d(\frac{p_d(k)}{p\cdot p_d(l)}-\frac{1}{q_{d, k}(l)})} D_{q_{d, k}(l)}(1, B^{l+1}).
\endeq
This finishes the proof of the First H\"older inequality.
\end{proof}

\begin{lem}[the Second H\"older inequality]\label{second-holder}
For given $d, k\ge 1$ and $2\le l\le k-1$, we have
\beq\label{1120e6.9}
\begin{split}
D_{q_{d, k(l)}}(\frac{l+1}{l}, B^{l+1}) & \lesim \left(\frac{1}{\delta} \right)^{d\cdot \frac{l+1}{l}\cdot \left(\frac{1}{q_{d, k}(l)}-\frac{p_d(k)}{p_d(l)\cdot p} \right)-d\cdot \frac{l+1}{l}\cdot \left(\frac{1}{q_{d, k}(l-1)}-\frac{p_d(k)}{p_d(l-1)\cdot p} \right)(1-\beta_l)}\\
& D^{\beta_l}_{\frac{n_d(l)p}{n_d(k)}}(\frac{l+1}{l}, B^{l+1}) \times D^{1-\beta_l}_{q_{d, k}(l-1)}(\frac{l+1}{l}, B^{l+1}).
\end{split}
\endeq
\end{lem}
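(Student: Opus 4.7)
The plan is to derive the Second H\"older inequality by combining a single step of standard H\"older interpolation (parallel to Step~1 in the proof of Lemma~\ref{first-holder}) with a Bernstein-type nesting comparison between the quantities $D_{t_1}$ and $D_{t_2}$ for different Lebesgue exponents $t_1\le t_2$. First I would introduce the auxiliary exponent $t^*$ defined by
\beq\label{prop-tstar}
\frac{1}{t^*}=\frac{\beta_l}{\frac{n_d(l)p}{n_d(k)}}+\frac{1-\beta_l}{q_{d,k}(l-1)},
\endeq
and apply the standard H\"older inequality (simultaneously at the level of the $L^{t^*}_\#$ norm and the counting sum over subcubes $J_{i,(l+1)/l}\subset R_i$) to obtain, with no $\delta$-loss,
\beq\label{prop-hold}
D_{t^*}(\tfrac{l+1}{l},B^{l+1})\lesim D^{\beta_l}_{\frac{n_d(l)p}{n_d(k)}}(\tfrac{l+1}{l},B^{l+1})\cdot D^{1-\beta_l}_{q_{d,k}(l-1)}(\tfrac{l+1}{l},B^{l+1}).
\endeq

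Next I would record the following nesting inequality: for any $t_1\le t_2$, any $q>0$, and any ball $B^r$,
\beq\label{prop-nest}
D_{t_1}(q,B^r)\lesim (\delta^{-dq})^{1/t_1-1/t_2}D_{t_2}(q,B^r),
\endeq
which follows from $\|f\|_{L^{t_1}_\#(w_{B^r})}\le\|f\|_{L^{t_2}_\#(w_{B^r})}$ combined with the power-mean inequality applied to the counting sum over the $\lesim\delta^{-dq}$ subcubes $J\subset R_i$ of side length $\delta^q$. In order to use \eqref{prop-nest} with $t_1=q_{d,k}(l)$ and $t_2=t^*$, I must verify $q_{d,k}(l)\le t^*$. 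This reduces to a short case analysis according to whether each of $q_{d,k}(l)$ and $q_{d,k}(l-1)$ attains the ``floor'' value $2$ or the ``raw'' value $p\cdot p_d(\cdot)/p_d(k)$: when both attain the raw value, the defining identity for $\beta_l$ gives $t^*=q_{d,k}(l)$ exactly; when $q_{d,k}(l-1)=2$ but $q_{d,k}(l)$ attains the raw value, replacing $p_d(k)/(p\cdot p_d(l-1))$ in the hypothetical ``raw'' $1/t^*$ by the smaller number $1/2$ strictly decreases $1/t^*$; and when $q_{d,k}(l)=q_{d,k}(l-1)=2$, the assumption $p>2n_d(k)/n_d(1)\ge 2n_d(k)/n_d(l)$ of Section~\ref{section-large-p} forces $1/t^*\le 1/2$. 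The remaining combinatorial case, $q_{d,k}(l)=2$ while $q_{d,k}(l-1)$ attains the raw value, is excluded because $p_d(l)>p_d(l-1)$ would then force $p\cdot p_d(l)/p_d(k)>p\cdot p_d(l-1)/p_d(k)\ge 2$.

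Combining \eqref{prop-hold} with \eqref{prop-nest} at $(t_1,t_2)=(q_{d,k}(l),t^*)$ gives
\beq\label{prop-combined}
D_{q_{d,k}(l)}(\tfrac{l+1}{l},B^{l+1})\lesim \delta^{-d\frac{l+1}{l}(\frac{1}{q_{d,k}(l)}-\frac{1}{t^*})}D^{\beta_l}_{\frac{n_d(l)p}{n_d(k)}}(\tfrac{l+1}{l},B^{l+1})\cdot D^{1-\beta_l}_{q_{d,k}(l-1)}(\tfrac{l+1}{l},B^{l+1}).
\endeq
The remaining task, which I expect to be the main algebraic hurdle, is to identify the exponent $d\cdot\frac{l+1}{l}(1/q_{d,k}(l)-1/t^*)$ with the one in the statement. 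Setting $\eta_j:=1/q_{d,k}(j)-p_d(k)/(p\cdot p_d(j))$ and dividing the defining identity $\frac{p_d(k)}{p_d(l)}=(1-\beta_l)\frac{p_d(k)}{p_d(l-1)}+\beta_l\frac{n_d(k)}{n_d(l)}$ by $p$, we obtain
\beq
\frac{p_d(k)}{p\cdot p_d(l)}-\beta_l\frac{n_d(k)}{n_d(l)p}=(1-\beta_l)\frac{p_d(k)}{p\cdot p_d(l-1)}.
\endeq
Subtracting this from $1/q_{d,k}(l)-1/t^*=1/q_{d,k}(l)-\beta_l n_d(k)/(n_d(l)p)-(1-\beta_l)/q_{d,k}(l-1)$ makes the $n_d(k)/n_d(l)$ terms cancel, leaving precisely $\eta_l-(1-\beta_l)\eta_{l-1}$, which is the exponent appearing in the statement of the lemma.
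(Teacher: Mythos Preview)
Your proof is correct and uses the same two ingredients as the paper---H\"older interpolation of the $D_t$ quantities together with the trivial nesting bound $D_{t_1}\lesim (\delta^{-dq})^{1/t_1-1/t_2}D_{t_2}$---so the approach is essentially the paper's. The only difference is organizational: the paper splits into the cases $q_{d,k}(l)=2$ and $q_{d,k}(l)>2$ and in the latter applies H\"older at the raw exponents $\frac{n_d(l)p}{n_d(k)},\ \frac{p\,p_d(l-1)}{p_d(k)}$ before nesting the second factor up to $q_{d,k}(l-1)$, whereas you introduce the single intermediate exponent $t^*$, apply H\"older once, and then nest $q_{d,k}(l)\to t^*$; your case analysis is relegated to verifying $q_{d,k}(l)\le t^*$, which makes the argument slightly more uniform but is otherwise the same computation.
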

\begin{proof}[Proof of Lemma \ref{second-holder}.]
There are two cases: $q_{d, k}(l)= 2$ and $q_{d, k}> 2$. Let us first work on the former case. In such a case, we also have $q_{d, k}(l-1)=2$. Hence the desired bound follows simply from the standard H\"older inequality
\beq
D_{q_{d, k(l)}}(\frac{l+1}{l}, B^{l+1}) \lesim \left(\frac{1}{\delta} \right)^{\frac{d(l+1)}{l}\cdot (\frac{1}{q_{d, k}(l)}-\frac{n_d(k)}{n_d(l)p})} D_{\frac{n_d(l)p}{n_d(k)}}(\frac{l+1}{l}, B^{l+1})
\endeq
For the latter case, by the standard H\"older inequality, we obtain
\beq
D_{q_{d, k(l)}}(\frac{l+1}{l}, B^{l+1}) \lesim D^{\beta_l}_{\frac{n_d(l)p}{n_d(k)}}(\frac{l+1}{l}, B^{l+1}) \times D^{1-\beta_l}_{\frac{p\cdot p_d(l-1)}{p_d(k)}}(\frac{l+1}{l}, B^{l+1}).
\endeq
Hence the desired estimate follows from another time of applying H\"older
\beq
D_{\frac{p\cdot p_d(l-1)}{p_d(k)}}(\frac{l+1}{l}, B^{l+1}) \lesim \left(\frac{1}{\delta} \right)^{\frac{d(l+1)}{l}\cdot (\frac{p_d(k)}{p\cdot p_d(l-1)}-\frac{1}{q_{d, k}(l-1)})} D_{q_{d, k}(l-1)}(\frac{l+1}{l}, B^{l+1})
\endeq
This finishes the proof of the Second H\"older inequality.
\end{proof}

\vspace{.5cm}

\noindent {\bf The first step of the ball-inflation argument.} Alongside, we will draw a picture (see Figure \ref{figure-tree} below) to illustrate what we will be doing at each step. \\

 In this step, we will start with
\beq\label{0725e4.27f}
A_{p}(1, B^k, 1)=\Big( \frac{1}{|\mc{B}_1(B^k)|} \sum_{B^1\in \mc{B}_1(B^k)} D_{q_{d, k(1)}}(1, B^1)^{p} \Big)^{1/p}.
\endeq
Recall that here we are working with $k=4$. First, by the standard H\"older inequality,
\beq\label{0105e2.41}
\begin{split}
& \Big( \frac{1}{|\mc{B}_1(B^k)|} \sum_{B^1\in \mc{B}_1(B^k)} D_{q_{d, k}(1)}(1, B^1)^{p} \Big)^{1/p}\\
& \lesim \left( \frac{1}{\delta}\right)^{d(\frac{1}{q_{d, k}(1)}-\frac{n_d(k)}{n_d(1)p})} \Big( \frac{1}{|\mc{B}_1(B^k)|} \sum_{B^1\in \mc{B}_1(B^k)} D_{\frac{n_d(1)p}{n_d(k)}}(1, B^1)^{p} \Big)^{1/p}.
\end{split}
\endeq
Second, applying Lemma \ref{0713flemma1.6} with $l=1$ to the right hand side of \eqref{0105e2.41}, we obtain
\beq\label{0105e2.42}
\left( \frac{1}{\delta}\right)^{d(\frac{1}{q_{d, k}(1)}-\frac{n_d(k)}{n_d(1)p})+\epsilon} \Big( \frac{1}{|\mc{B}_2(B^k)|} \sum_{B^2\in \mc{B}_2(B^k)} D_{\frac{p \cdot n_d(1)}{n_d(k)}}(1, B^2)^{p} \Big)^{1/p}.
\endeq
Here and in the rest, $\epsilon$ is a real number that can be made arbitrarily small. Its value may change from line to line. In Figure \ref{figure-tree}, we draw the root node, denoted by $\frac{p\cdot n_d(1)}{n_d(k)}$, to represent \eqref{0105e2.42}.\\

By the First H\"older inequality with $l=1$, the latter factor of \eqref{0105e2.42} can be bounded by
\begin{align}\label{0105e2.44}
 &\underbrace{\delta^{-d(1-\alpha_1)(\frac{p_d(k)}{p\cdot p_d(1)}-\frac{1}{q_{d, k}(1)})}}_{\text{1-H\"older with } l=1} \times\\
\label{180202e6.24} \Big( \frac{1}{|\mc{B}_2(B^k)|} \sum_{B^2\in \mc{B}_2(B^k)} & D_{\frac{p n_d(2)}{n_d(k)}}(1, B^2)^{p} \Big)^{\frac{\alpha_1}{p}} \Big( \frac{1}{|\mc{B}_2(B^k)|} \sum_{B^2\in \mc{B}_2(B^k)} D_{q_{d, k}(1)}(1, B^2)^{p} \Big)^{\frac{1-\alpha_1}{p}} .
\end{align}
This step corresponds to the bifurcation of the root node into two nodes, denoted by $q_{d, k}(1)$ and $\frac{p\cdot n_d(2)}{n_d(k)}$.
\begin{rem}\label{180203rem6.2}
Here one may wonder why we do not introduce $\widetilde{\alpha_l}$
  given by
\beq
\frac{1}{\frac{p\cdot n_d(l)}{n_d(k)}}=\frac{\widetilde{\alpha_l}}{\frac{p\cdot n_d(l+1)}{n_d(k)}}+\frac{1-\widetilde{\alpha_l}}{q_{d, k}(l)},
\endeq
and bound the latter factor of \eqref{0105e2.42} directly by applying the standard H\"older inequality, without losing the $\delta$-power \eqref{0105e2.44}. This idea may work as well, if $\{q_{d, k}(l)\}_{l=1}^{k-1}$ are adjusted appropriately. However it will generate a significant amount of extra calculations after the iteration steps. Moreover, it will very likely destroy the crucial algebraic identity \eqref{abc42}. Here we artificially lose a term \eqref{0105e2.44}, to make the iteration more trackable. For instance, see the iterative formula \eqref{iteration-formula}. Most importantly, \eqref{abc42} remains unchanged.
\end{rem}

We further process these two terms/nodes in \eqref{180202e6.24}.
By $L^2$ orthogonality, we bound \eqref{180202e6.24} by
\beq\label{0105e2.45}
\begin{split}
& \underbrace{\delta^{-\Gamma_{d, 1}(q_{d, k}(1))(1-\alpha_1)}}_{L^2 \text{orthogonality}}
 \times \\
& \Big(\frac{1}{|\mc{B}_2(B^k)|} \sum_{B^2\in \mc{B}_2(B^k)} D_{\frac{p n_d(2)}{n_d(k)}}(1, B^2)^{p}\Big)^{\frac{\alpha_1}{p}} \Big( \frac{1}{|\mc{B}_2(B^k)|} \sum_{B^2\in \mc{B}_2(B^k)} D_{q_{d, k}(1)}(2, B^2)^{p} \Big)^{\frac{1-\alpha_1}{p}}.
\end{split}
\endeq
We apply Lemma \ref{0713flemma1.6} with $l=2$ to the second last term in the last display, and bound the whole term by
\beq\label{1120e6.18}
\begin{split}
& \underbrace{\delta^{-\Gamma_{d, 1}(q_{d, k}(1))(1-\alpha_1)-\epsilon}}_{L^2 \text{orthogonality}}
\times \\
& \Big(\frac{1}{|\mc{B}_3(B^k)|} \sum_{B^3\in \mc{B}_3(B^k)} D_{\frac{p n_d(2)}{n_d(k)}}(1, B^3)^{p}\Big)^{\frac{\alpha_1}{p}} A_{p}(2, B^k, 2)^{1-\alpha_1}.
\end{split}
\endeq
The last term in \eqref{1120e6.18} will not be further processed and will carry over directly to the iteration procedure in the end.\\

It is the second last term in \eqref{1120e6.18} that will be further processed. The current frequency scale we are working with is $\delta$. To pass to even smaller frequency scales, the idea in \cite{BDG} is to use a lower-degree decoupling inequality. Of course the same idea is also hidden in Wooley's efficient congruencing, just with a different formulation. By the First  H\"older inequality,
\beq\label{0105e2.46}
\begin{split}
D_{\frac{p\cdot n_d(2)}{n_d(k)}}(1, B^3) \lesim & \underbrace{\delta^{-d(1-\alpha_2)(\frac{p_d(k)}{p\cdot p_d(2)}-\frac{1}{q_{d, k}(2)})}}_{\text{1-H\"older with } l=2}\\
& D_{q_{d, k}(2)}(1, B^3)^{(1-\alpha_2)} D_{\frac{p\cdot n_d(3)}{n_d(k)}}(1, B^3)^{\alpha_2}.
\end{split}
\endeq
During this step, the node $\frac{p\cdot n_d(2)}{n_d(k)}$ bifurcates into two further nodes, denoted by $q_{d, k}(2)$ and $\frac{p\cdot n_d(3)}{n_d(k)}$.

According to the definition \eqref{D-term-0309}, having the former term $D_{q_{d, k}(2)}(1, B^3)$ means that we are working on $\|E_{R_{i, 1}}g\|_{L^{q_{d, k}(2)}_{\#}(w_{B^3})}$. By the uncertainty principle, such a ball of radius $\delta^{-3}$ does not distinguish the surface $\mc{S}_{d, k}$ from
\beq
(\Phi_{d, 2}(t_1, \dots, t_d), 0, \dots, 0).
\endeq
We refer to Lemma 8.2 in \cite{BDG} to make such a statement precise. By applying an $l^{q_{d, k}(2)} L^{q_{d, k}(2)}$ lower-degree decoupling inequality for the two-dimensional surface $\mc{S}_{2, 2}$ (see either \cite{BD2} or \cite{BDG-2}), \eqref{0105e2.46} can be further bounded by
\beq\label{0105e2.48}
\begin{split}
& \underbrace{\delta^{-d(1-\alpha_2)(\frac{p_d(k)}{p\cdot p_d(2)}-\frac{1}{q_{d, k}(2)})}}_{\text{1-H\"older with } l=2} \underbrace{\delta^{-\frac{1}{2}\Gamma_{d, 2}(q_{d, k}(2))(1-\alpha_2)}}_{\text{decoupling for } \mc{S}_{2, 2}}\\
&  \times D_{q_{d, k}(2)}(\frac{3}{2}, B^3)^{(1-\alpha_2)} D_{\frac{p\cdot n_d(3)}{n_d(k)}}(1, B^3)^{ \alpha_2}.
\end{split}
\endeq
We need to further process the term $D_{q_{d, k}(2)}(\frac{3}{2}, B^3)$. By the Second H\"older's inequality with $l=2$ and the $L^2$ orthogonality,
\beq\label{0105e2.50}
\begin{split}
 D_{q_{d, k}(2)}(\frac{3}{2}, B^3)  & \lesim \underbrace{\delta^{-d\cdot \frac{3}{2}\cdot \left(\frac{1}{q_{d, k}(2)}-\frac{p_d(k)}{p_d(2)\cdot p} \right)+d\cdot \frac{3}{2}\cdot \left(\frac{1}{q_{d, k}(1)}-\frac{p_d(k)}{p_d(1)\cdot p} \right)(1-\beta_2)}}_{\text{2-H\"older with } l=2}\\
& \times D_{q_{d, k}(1)}(\frac{3}{2}, B^3)^{1-\beta_2} D_{\frac{p n_d(2)}{n_d(k)}}(\frac{3}{2}, B^3)^{\beta_2}\\
		&\lesim \underbrace{\delta^{-d\cdot \frac{3}{2}\cdot \left(\frac{1}{q_{d, k}(2)}-\frac{p_d(k)}{p_d(2)\cdot p} \right)+d\cdot \frac{3}{2}\cdot \left(\frac{1}{q_{d, k}(1)}-\frac{p_d(k)}{p_d(1)\cdot p} \right)(1-\beta_2)}}_{\text{2-H\"older with } l=2}\\
		& \times\underbrace{\delta^{-\frac{3}{2} \Gamma_{d, 1}(q_{d, k}(1)) (1-\beta_2)}}_{L^2 \text{ orthogonality}}\times
		D_{q_{d, k}(1)}(3, B^3)^{1-\beta_2} D_{\frac{p\cdot n_d(2)}{n_d(k)}}(\frac{3}{2}, B^3)^{\beta_2}.
\end{split}
\endeq
During this step, the node $q_{d, k}(2)$ bifurcates into two nodes $q_{d, k}(1)$ and $\frac{p\cdot n_d(2)}{n_d(k)}$.
\begin{rem}\label{180205rem6.3}
Here we make a comment on the Second H\"older inequality in Lemma \ref{second-holder}. It is akin to Remark \ref{180203rem6.2}.
One again may wonder why we did not replace $\beta_l$ by $\widetilde{\beta_l}$, which is defined via
\beq
\frac{1}{q_{d, k}(l)}=\frac{1-\widetilde{\beta_l}}{q_{d, k}(l-1)}+\frac{\widetilde{\beta_l}}{\frac{p\cdot n_d(l)}{n_d(k)}},
\endeq
and applied the standard H\"older inequality in the proof of Lemma \ref{second-holder}. This way of applying H\"older's inequality will not produce any loss in $\delta^{-1}$. The reason is that we would like to keep $\{\alpha_l\}$ and $\{\beta_l\}$ essentially unchanged when we consider different values of $p$. Hence as long as we verify \eqref{abc42} for one exponent $p$,  it will be true for every $p$.
\end{rem}
So far we have obtained
\beq
\begin{split}
& A_{p}(1, B^4, 1)\lesim  \delta^{-\epsilon}\times \delta^{-d(\frac{1}{q_{d, k}(1)}-\frac{n_d(k)}{n_d(1)p})} \times  \underbrace{\delta^{-d(1-\alpha_1)(\frac{p_d(k)}{p\cdot p_d(1)}-\frac{1}{q_{d, k}(1)})}}_{\text{1-H\"older with } l=1} \\
& \underbrace{\delta^{-\Gamma_{d, 1}(q_{d, k}(1))(1-\alpha_1)}}_{L^2 \text{orthogonality}}
 \times \underbrace{\delta^{-d\alpha_1(1-\alpha_2)(\frac{p_d(k)}{p\cdot p_d(2)}-\frac{1}{q_{d, k}(2)})}}_{\text{1-H\"older with } l=2} \times \\
& \underbrace{\delta^{-\frac{1}{2}\Gamma_{d, 2}(q_{d, k}(2))(1-\alpha_2)\alpha_1}}_{\text{decoupling for } \mc{S}_{2, 2}} \times
  \underbrace{\delta^{-\frac{3}{2} \Gamma_{d, 1}(q_{d, k}(1)) (1-\beta_2)\alpha_1(1-\alpha_2)}}_{L^2 \text{ orthogonality}}\times
\\
& \underbrace{\delta^{-\alpha_1(1-\alpha_2)\left[d\cdot \frac{3}{2}\cdot \left(\frac{1}{q_{d, k}(2)}-\frac{p_d(k)}{p_d(2)\cdot p} \right)-d\cdot \frac{3}{2}\cdot \left(\frac{1}{q_{d, k}(1)}-\frac{p_d(k)}{p_d(1)\cdot p} \right)(1-\beta_2)\right] }}_{\text{2-H\"older with } l=2}\\
& A_{p}(2, B^4, 2)^{1-\alpha_1}A_{p}(3, B^4, 3)^{\alpha_1(1-\alpha_2)(1-\beta_2)} \times \\
& \Big[ \frac{1}{|\mc{B}^3(B^4)|} \sum_{B^3\in \mc{B}_3(B^4)} D_{\frac{p\cdot n_d(2)}{n_d(k)}}(\frac{3}{2}, B^3)^{p} \Big]^{\frac{\alpha_1(1-\alpha_2)\beta_2}{p}}  D_{\frac{p\cdot n_d(3)}{n_d(k)}}(1, B^4)^{ \alpha_1\alpha_2}
\end{split}
\endeq
Here the last term is obtained by applying Lemma \ref{0713flemma1.6} with $l=3$.\\

We further process the last term in the last display.
Similar to the steps from \eqref{0105e2.46} to \eqref{0105e2.48}, we will first apply the First H\"older's inequality to $D_{\frac{p\cdot n_d(3)}{n_d(k)}}(1, B^4)$, and then apply an $l^{q_{d, k}(3)} L^{q_{d, k}(3)}$ decoupling inequality for the surface $\mc{S}_{2, 3}$ and an $l^{q_{d, k}(2)} L^{q_{d, k}(2)}$ decoupling inequality for the surface $\mc{S}_{2, 2}$ for the resulting terms. In the end, we obtain
\beq\label{first-ball}
\begin{split}
& A_{p}(1, B^4, 1)\lesim \delta^{-\epsilon}\times \delta^{-d(\frac{1}{q_{d, k}(1)}-\frac{n_d(k)}{n_d(1)p})} \times  \underbrace{\delta^{-d(1-\alpha_1)(\frac{p_d(k)}{p\cdot p_d(1)}-\frac{1}{q_{d, k}(1)})}}_{\text{1-H\"older with } l=1} \\
& \underbrace{\delta^{-\Gamma_{d, 1}(q_{d, k}(1))(1-\alpha_1)}}_{L^2 \text{orthogonality}}
 \times \underbrace{\delta^{-d\alpha_1(1-\alpha_2)(\frac{p_d(k)}{p\cdot p_d(2)}-\frac{1}{q_{d, k}(2)})}}_{\text{1-H\"older with } l=2} \times \\
& \underbrace{\delta^{-\frac{1}{2}\Gamma_{d, 2}(q_{d, k}(2))(1-\alpha_2)\alpha_1}}_{\text{decoupling for } \mc{S}_{2, 2}} \times
  \underbrace{\delta^{-\frac{3}{2} \Gamma_{d, 1}(q_{d, k}(1)) (1-\beta_2)\alpha_1(1-\alpha_2)}}_{L^2 \text{ orthogonality}}\times
\\
& \underbrace{\delta^{-\alpha_1(1-\alpha_2)\left[d\cdot \frac{3}{2}\cdot \left(\frac{1}{q_{d, k}(2)}-\frac{p_d(k)}{p_d(2)\cdot p} \right)-d\cdot \frac{3}{2}\cdot \left(\frac{1}{q_{d, k}(1)}-\frac{p_d(k)}{p_d(1)\cdot p} \right)(1-\beta_2)\right] }}_{\text{2-H\"older with } l=2}\\
& \times \underbrace{\delta^{-d\alpha_1 \alpha_2(1-\alpha_3)(\frac{p_d(k)}{p\cdot p_d(3)}-\frac{1}{q_{d, k}(3)})}}_{\text{1-H\"older with } l=3} \times \underbrace{\delta^{-\frac{1}{3}\Gamma_{d, 3}(q_{d, k}(3))\alpha_1\alpha_2(1-\alpha_3)}}_{\text{decoupling for } \mc{S}_{2, 3}}\\
& \times \underbrace{\delta^{-\alpha_1 \alpha_2 (1-\alpha_3)\left[d\cdot \frac{4}{3}\cdot \left(\frac{1}{q_{d, k}(3)}-\frac{p_d(k)}{p_d(3)\cdot p} \right)-d\cdot \frac{4}{3}\cdot \left(\frac{1}{q_{d, k}(2)}-\frac{p_d(k)}{p_d(2)\cdot p} \right)(1-\beta_3)\right]}}_{\text{2-H\"older with } l=3}\\
& A_{p}(2, B^4, 2)^{1-\alpha_1}A_{p}(3, B^4, 3)^{\alpha_1(1-\alpha_2)(1-\beta_2)} \times \\
& \Big[ \frac{1}{|\mc{B}^3(B^4)|} \sum_{B^3\in \mc{B}_3(B^4)} D_{\frac{p\cdot n_d(2)}{n_d(k)}}(\frac{3}{2}, B^3)^{p} \Big]^{\frac{\alpha_1(1-\alpha_2)\beta_2}{p}}  D_{\frac{p\cdot n_d(3)}{n_d(k)}}(\frac{4}{3}, B^4)^{ \alpha_1\alpha_2(1-\alpha_3)\beta_3} \times \\
& D_{q_{d, k}(2)}(2, B^4)^{\alpha_1\alpha_2(1-\alpha_3)(1-\beta_3)} D_p(1, B^4)^{\alpha_1\alpha_2\alpha_3}
\end{split}
\endeq
This finishes the first stage of the ball-inflation argument. \\

\begin{figure}[h]
    \centering
    \includegraphics[scale=1.2]{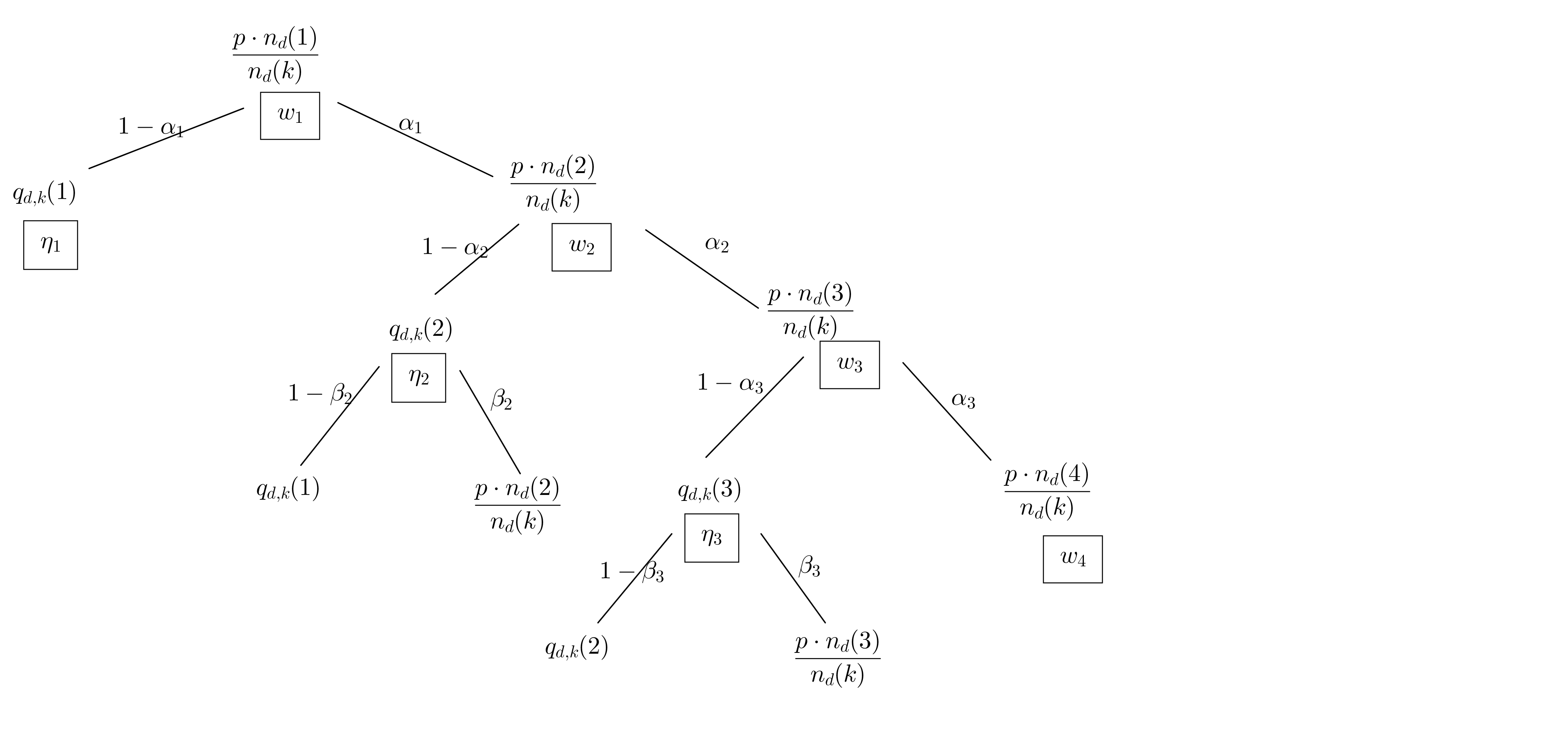}
    \caption{Tree-growing}
    \label{figure-tree}
\end{figure}

\noindent {\bf Intermediate stages of the ball-inflation argument.} In the first stage, we have obtained an estimate for $A_{p}(1, B^4, 1)$ for each ball $B^4$. To continue, we choose an extremely large integer $r$, raise both sides of \eqref{first-ball} to the $p$-th power, and sum over $B^4\in \mc{B}_4(B^r)$ on both sides of \eqref{first-ball}. As a consequence, we obtain
\beq
\begin{split}
A_{p}(1, B^r, 1) & \lesim \delta^{\text{certain power}} A_{p}(2, B^r, 2)^{1-\alpha_1} A_{p}(3, B^r, 3)^{\alpha_1(1-\alpha_2)(1-\beta_2)} \times \\
& \Big[ \frac{1}{|\mc{B}^3(B^r)|} \sum_{B^3\in \mc{B}_3(B^r)} D_{\frac{p\cdot n_d(2)}{n_d(k)}}(\frac{3}{2}, B^3)^{p} \Big]^{\frac{\alpha_1(1-\alpha_2)\beta_2}{p}}\\
& \Big[ \frac{1}{|\mc{B}^4(B^r)|} \sum_{B^4\in \mc{B}_4(B^r)} D_{\frac{p\cdot n_d(3)}{n_d(k)}}(\frac{4}{3}, B^4)^{p} \Big]^{\frac{\alpha_1 \alpha_2 (1-\alpha_3)\beta_3}{p}}\\
& \Big[ \frac{1}{|\mc{B}^4(B^r)|} \sum_{B^4\in \mc{B}_4(B^r)}  D_{q_{d, k}(2)}(2, B^4)^p\Big]^{\frac{\alpha_1\alpha_2(1-\alpha_3)(1-\beta_3)}{p}} D_p(1, B^r)^{\alpha_1\alpha_2\alpha_3}
\end{split}
\endeq
There are six terms on the right hand side. The first term $A_{p}(2, B^r, 2)$ and the second term $A_{p}(3, B^r, 3)$ have the same structure as the term on the left hand side. Hence they are ready  to be iterated. The last term $D_p(1, B^r)$ is already of the shape of the decoupling inequality \eqref{2305e1.20}. Hence it will not be further processed and will carry over directly to the iteration argument.

The remaining three terms will be further processed. The principle is very clear: Nodes in Figure \ref{figure-tree} with the same name will be processed in a similar way.  The term with $D_{\frac{p\cdot n_d(2)}{n_d(k)}}(\frac{3}{2}, B^3)$ will be processed in a way similar to that of $D_{\frac{p \cdot n_d(2)}{n_d(k)}}(1, B^2)$ in \eqref{0105e2.45} and \eqref{1120e6.18}. The same principle applies to the other remaining two terms. \\

\noindent {\bf After enough many steps of the ball-inflations.} We run the previous ball-inflation argument for enough many steps. We will terminate at a step where only terms of the forms
\beq
A_p(b_i, B^r, b_i), D_p(d_i, B^r),
\endeq
\beq\label{180216e7.37}
\Big[ \frac{1}{|\mc{B}^{u'_i}(B^r)|} \sum_{B^{u'_i}\in \mc{B}_{u'_i}(B^r)} D_{\frac{p\cdot n_d(2)}{n_d(k)}}(\frac{u'_i}{2}, B^{u'_i})^{p} \Big]^{1/p}
\endeq
and
\beq\label{180216e7.38}
\Big[ \frac{1}{|\mc{B}^{u''_i}(B^r)|} \sum_{B^{u''_i}\in \mc{B}_{u''_i}(B^r)} D_{\frac{p\cdot n_d(3)}{n_d(k)}}(\frac{u''_i}{3}, B^{u''_i})^{p} \Big]^{1/p}
\endeq
are involved. Here $b_i, d_i, u'_i$ and $u''_i$ are fractions, in particular, $u'_i$ and $u''_i$ are extremely large as real numbers. The symbol $u_i$ is reserved for later use. In other words, we will terminate the ball-inflation at a step where no terms involving $D_{q_{d, k}(2)}$ or $D_{q_{d, k}(3)}$ appear. \\

Let us pause and explain why we can allow terms involving $D_{\frac{p\cdot n_d(2)}{n_d(k)}}$ and $D_{\frac{p\cdot n_d(3)}{n_d(k)}}$ to be only of the forms \eqref{180216e7.37} and \eqref{180216e7.38}, respectively. We can guarantee that for every such term, its predecessor in Figure \ref{figure-tree} is either $q_{d, k}(2)$ or $q_{d, k}(3)$. Let us take the example of $q_{d, k}(2)$. Recall that at such a node, similar to \eqref{0105e2.48}, we treated our surface as a quadratic surface, and applied a sharp $l^{q_{d, k}(2)}L^{q_{d, k}(2)}$ decoupling inequality to it. Afterwards, we applied a Second H\"older inequality. This will result exactly in a term of the form \eqref{180216e7.37}. \\

Suppose we arrive at
\begin{align}\label{0205e3.18}
A_{p}(1, B^r, 1) & \lesim \delta^{-\lambda-\epsilon} \left(\prod_{i=0}^{r_1} A_{p}(b_i, B^r, b_i)^{\gamma_i} \right)\left(\prod_{i=0}^{r_2} D_{p}(d_i, B^r)^{\tau_i} \right)\\
		\label{180216e7.40}& \prod_{i=1}^{r_3}\Big[ \frac{1}{|\mc{B}^{u'_i}(B^r)|} \sum_{B^{u'_i}\in \mc{B}_{u'_i}(B^r)} D_{\frac{p\cdot n_d(2)}{n_d(k)}}(\frac{u'_i}{2}, B^{u'_i})^{p} \Big]^{\theta'_i /p}\\
		\label{180216e7.41}& \prod_{i=1}^{r_4}\Big[ \frac{1}{|\mc{B}^{u''_i}(B^r)|} \sum_{B^{u''_i}\in \mc{B}_{u''_i}(B^r)} D_{\frac{p\cdot n_d(3)}{n_d(k)}}(\frac{u''_i}{3}, B^{u''_i})^{p} \Big]^{\theta''_i/p}.
\end{align}
Here $r, r_1, r_2, r_3$ and $r_4$ are extremely large numbers that are irrelevant to us, and the individual values of $b_i, d_i, u'_i, u''_i, \gamma_i, \tau_i, \theta'_i$ and $\theta''_i$ will not be important. The quantities that matter will appear soon. \\

The following lemma shows that $\lambda$ will still stay controlled when $r, r_1, r_2, r_3$ and $r_4$ become larger and larger.
\begin{lem}\label{180218lem7.3}
There exists a constant $\Lambda_{d, k}<+\infty$ such that
\beq
\lambda<\Lambda_{d, k}.
\endeq
In particular, $\Lambda_{d, k}$ is independent of $r, r_1, r_2, r_3$ and $r_4$.
\end{lem}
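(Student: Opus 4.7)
I would express $\lambda$ as a weighted sum indexed by the nodes of the iteration tree depicted in Figure \ref{figure-tree}, and then use the algebraic structure built into the definitions of $q_{d, k}(l)$, $\alpha_l$ and $\beta_l$ to argue that this sum is bounded uniformly in $r, r_1, r_2, r_3, r_4$.

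Each operation applied during the iteration contributes a factor $\delta^{-w_v P_v}$, where the operation at node $v$ is one of: a First H\"older inequality (Lemma \ref{first-holder}), a Second H\"older inequality (Lemma \ref{second-holder}), an application of the ball-inflation Lemma \ref{0713flemma1.6}, an $L^2$-orthogonality step, or a lower-dimensional $l^{q_{d, k}(l)} L^{q_{d, k}(l)}$ decoupling for $\mc{S}_{d, l}$ with $1 \le l \le k-1$. Here $P_v$ is a per-step exponent that takes only finitely many values depending on $d, k, p$ and $l$, and $w_v$ is a product of factors drawn from the finite set $\{\alpha_l, 1-\alpha_l, \beta_l, 1-\beta_l\}_{l=1}^{k-1}$, accumulated along the root-to-$v$ path. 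Writing $\lambda = \sum_v w_v P_v$, we have $|P_v| \le C_{d, k, p}$ uniformly in $v$. Moreover, at each generation of the iteration the weights satisfy $\sum_v w_v = 1$, because each $\alpha$-bifurcation of a node of weight $w$ produces children of weights $\alpha_l w$ and $(1-\alpha_l) w$, and the same mass-preservation holds for $\beta$-bifurcations.

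The crux is then to show that the contributions from successive generations do not merely accumulate but in fact telescope, by virtue of the algebraic identity \eqref{abc42} flagged after Lemma \ref{second-holder}. The strategy is to group nodes of the tree by ``type'', where the type of $v$ is determined by the sequence of child-labels selected at each $\alpha$- and $\beta$-bifurcation on the root-to-$v$ path; there are only finitely many such types, depending only on $k$. Restricted to nodes of a fixed type, the series $\sum_v w_v P_v$ is a convergent series whose total is controlled by the identity \eqref{abc42}: the H\"older-generated per-step exponents of the form $d \cdot \tfrac{l+1}{l}(1/q_{d, k}(l) - p_d(k)/(p_d(l) p))$ from Lemma \ref{second-holder} are engineered to cancel against the $L^2$-orthogonality and lower-dimensional decoupling exponents $\tfrac{1}{l} \Gamma_{d, l}(q_{d, k}(l))$ once weighted by the appropriate products of $\alpha_l$'s and $\beta_l$'s. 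Summing the resulting bounded contributions over the finitely many types produces $\lambda < \Lambda_{d, k}$ with $\Lambda_{d, k}$ independent of $r, r_1, r_2, r_3, r_4$.

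The main obstacle will be the combinatorial bookkeeping required to verify the claimed cancellations within each type: one must check that every per-step exponent $P_v$ within a given type carries exactly the right $\alpha_l, \beta_l$ weights so as to match an instance of \eqref{abc42}. The choice of $p_d(k)$ in \eqref{key-index} together with the corresponding definitions of $q_{d, k}(l), \alpha_l, \beta_l$ were engineered precisely so that this matching holds uniformly in $l \in \{1, \ldots, k-1\}$, so once the bookkeeping is carried out the bound $\lambda < \Lambda_{d, k}$ follows.
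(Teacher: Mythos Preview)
There is a genuine gap in your argument: the two claims ``$|P_v|\le C_{d,k,p}$ uniformly in $v$'' and ``$\sum_v w_v=1$ at each generation'' cannot both be true. The scale parameter at a node grows along the tree (each pass through a $q_{d,k}(l)$-node multiplies the scale by $\tfrac{l+1}{l}$; see the factor $\tfrac{3}{2}$ in \eqref{0105e2.50} and the $\tfrac{l+1}{l}$ in \eqref{iteration-formula}). If you define $w_v$ purely as a product of $\alpha_l,1-\alpha_l,\beta_l,1-\beta_l$, then indeed $\sum_v w_v=1$ per generation, but then the per-step exponent $P_v$ carries this scale factor and is \emph{not} bounded uniformly in $v$. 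If instead you absorb the scale into the weight (which is what the matrix $\mc{M}$ in \eqref{matrix} does, with entries $\tfrac{l+1}{l}\beta_l$ and $\tfrac{l+1}{l}(1-\beta_l)$), then $P_v$ is bounded but the generation-weights no longer sum to~$1$; some row sums of $\mc{M}$ exceed~$1$, and the finiteness of $\lambda$ is exactly the nontrivial statement that the spectral radius of $\mc{M}$ is nevertheless~$<1$.

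Your appeal to \eqref{abc42} does not close this gap and is in fact logically inverted. In the paper, \eqref{abc42} (Lemma~\ref{180205lemma6.4}) is derived \emph{from} the spectral bound on $\mc{M}$ (Lemma~\ref{180218lem8.1}), which is precisely the content needed for Lemma~\ref{180218lem7.3}; invoking \eqref{abc42} as a tool here would be circular. The paper's route is: write the iteration as $(\mc{I}-\mc{M})\vec{w}=\vec{c}$, exhibit an explicit entrywise-positive solution to the companion system (Lemma~\ref{180223lem9.1}, verified via the combinatorial identity in Claim~\ref{180220claim9.2}), and use that positivity in a Perron--Frobenius-type argument to force $\mc{M}^r\to 0$. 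Any correct proof must confront why the $\tfrac{l+1}{l}>1$ amplification is defeated; your ``telescoping within each type'' sketch does not address this, and the claimed cancellations between the H\"older losses and the $\tfrac{1}{l}\Gamma_{d,l}(q_{d,k}(l))$ terms are not what makes $\lambda$ finite --- those constants are the \emph{inhomogeneous} data $\vec{c}$, whereas finiteness is governed entirely by the homogeneous part~$\mc{M}$.
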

The proof of Lemma \ref{180218lem7.3} is postponed to the forthcoming section. We first use this lemma to show that the contributions from \eqref{180216e7.40} and \eqref{180216e7.41} are ``negligible".  Let us again take the example of \eqref{180216e7.40}. By the standard H\"older inequality,
\beq
\begin{split}
& \Big[ \frac{1}{|\mc{B}^{u'_i}(B^r)|} \sum_{B^{u'_i}\in \mc{B}_{u'_i}(B^r)} D_{\frac{p\cdot n_d(2)}{n_d(k)}}(\frac{u'_i}{2}, B^{u'_i})^{p} \Big]^{\theta'_i /p} \lesim \delta^{-2d u'_i\theta'_i} D_p(\frac{u'_i}{2}, B^r)^{\theta'_i}.
\end{split}
\endeq
We will prove
\begin{lem}\label{180216lem7.3}
Under the above notation, for every $\epsilon>0$, we can run our ball-inflation argument for enough many steps, depending on $\epsilon,$ such that
\beq
\sum_{i} u'_i \theta'_i <\epsilon.
\endeq
\end{lem}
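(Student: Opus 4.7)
The plan is to encode the whole ball-inflation iteration as a weighted branching process on a labelled tree, and to prove geometric decay of the total weight of the pending $\frac{p\,n_d(2)}{n_d(k)}$-terminals against at most linear growth of the radius exponents $u'_i$.

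First I would give a unified tree description. The root is labelled $\frac{p\,n_d(1)}{n_d(k)}$ with weight one, corresponding to \eqref{0105e2.42}. Processing a $\frac{p\,n_d(l)}{n_d(k)}$-node by Lemma \ref{first-holder} and Lemma \ref{0713flemma1.6} spawns a $q_{d,k}(l)$-child of relative weight $1-\alpha_l$ and a $\frac{p\,n_d(l+1)}{n_d(k)}$-child of relative weight $\alpha_l$ (the latter becomes a $D_p$-terminal when $l=k-1$, since $\frac{p\,n_d(k)}{n_d(k)}=p$). Processing a $q_{d,k}(l)$-node by Lemma \ref{second-holder} together with $L^2$-orthogonality and a ball-inflation spawns a $q_{d,k}(l-1)$-child of relative weight $1-\beta_l$ and a $\frac{p\,n_d(l)}{n_d(k)}$-child of relative weight $\beta_l$ (the former becomes an $A_p$-terminal when $l=1$). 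Together with the $A_p$-, $D_p$-, and \eqref{180216e7.40}/\eqref{180216e7.41}-terminals generated at the stopping step, this exhausts all labels. The weight $\theta'_i$ of any \eqref{180216e7.40}-terminal is the product of the branching factors along its root-to-leaf path, and the scale $u'_i$ is determined by the number of ball-inflation steps preceding it.

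Next I would prove geometric decay of the pending mass. Let $\Sigma_n$ denote the sum of weights over all non-terminal nodes after $n$ processing rounds. Any $q_{d,k}(l)$-chain descending via its $(1-\beta_l)$-branches reaches $q_{d,k}(1)$ within at most $k-1$ rounds and then releases a $(1-\beta_1)$-fraction of its weight into an $A_p$-terminal; symmetrically, any $\frac{p\,n_d(l)}{n_d(k)}$-chain descending via its $\alpha_l$-branches reaches a $D_p$-terminal within at most $k-1$ rounds. Since $\alpha_l,\beta_l\in(0,1)$ are bounded away from the endpoints on the prescribed range $p>2n_d(k)/n_d(1)$, there exists $\eta=\eta_{d,k,p}>0$ with $\Sigma_{n+k}\le(1-\eta)\Sigma_n$, and hence $\Sigma_n\lesim(1-\eta)^{\lfloor n/k\rfloor}$. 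In particular $\sum_i\theta'_i\le\Sigma_n$ decays geometrically in $n$. Each iteration round involves a bounded number (at most $k$) of consecutive ball-inflation steps, each of which enlarges the radius exponent by an additive constant of size $O(1)$; therefore $\max_i u'_i\le C_{d,k}\,n$ after $n$ rounds. Combining,
\[
\sum_i u'_i\theta'_i \;\le\; C_{d,k}\,n\,\Sigma_n \;\lesim\; n\,(1-\eta)^{\lfloor n/k\rfloor}\;\longrightarrow\;0\quad\text{as }n\to\infty,
\]
so any prescribed $\epsilon>0$ is achieved by taking $n$ sufficiently large depending on $\epsilon,d,k,p$.

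The main obstacle is the uniform lower bound $\eta_{d,k,p}>0$: one must verify that neither $\alpha_l$ nor $\beta_l$ degenerates to an endpoint of $[0,1]$ throughout the iteration. This is a direct computation from the two algebraic identities defining $\alpha_l$ and $\beta_l$ just before Lemma \ref{first-holder}, using $p>2n_d(k)/n_d(1)$ and the monotonicity of $n_d(l)$ and $p_d(l)$ in $l$. A secondary bookkeeping point is to package the iteration so that a single ``round'' advances every pending branch by one Hölder-plus-inflation step while respecting the termination rule that no $D_{q_{d,k}(l)}$-terms survive; this is built into the definition of the tree above. The same argument, applied to the $\frac{p\,n_d(3)}{n_d(k)}$-subtree, will handle the analogous bound $\sum_i u''_i\theta''_i<\epsilon$ for the terms in \eqref{180216e7.41}.
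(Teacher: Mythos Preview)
Your tree formalism and the geometric decay of the pending mass $\Sigma_n$ are fine, but the proof breaks at the claim that ``each ball-inflation step enlarges the radius exponent by an additive constant of size $O(1)$,'' which gives you $\max_i u'_i\le C_{d,k}\,n$. This is false: the radius exponent grows \emph{multiplicatively}. Concretely, at a $\frac{p\,n_d(l)}{n_d(k)}$-node with frequency scale $\delta^q$ and ball $B^{lq}$, the ball-inflation Lemma~\ref{0713flemma1.6} produces $B^{(l+1)q}$, and the subsequent lower-degree decoupling replaces $q$ by $\frac{l+1}{l}q$. Thus along the cycle $\frac{p\,n_d(2)}{n_d(k)}\to q_{d,k}(2)\to \frac{p\,n_d(2)}{n_d(k)}$ the radius exponent is multiplied by $\tfrac32$ each time, so after $n$ rounds $u'_i$ is of size $(3/2)^n$, not $O(n)$. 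Your final estimate would then read $\sum_i u'_i\theta'_i\lesim c^n\Sigma_n$ with $c>1$, and whether this tends to zero is exactly the question of whether the spectral radius of the transfer matrix $\mc{M}$ (the one in \eqref{matrix}, whose $\eta$-rows have entries summing to $\frac{l+1}{l}>1$) is strictly below $1$. That is a genuine fact about the specific numbers $\alpha_l,\beta_l$---it is the content of Lemma~\ref{180218lem8.1}---and it cannot be recovered from the soft bound ``$\alpha_l,\beta_l$ are bounded away from $\{0,1\}$''.

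The paper sidesteps this balance-of-exponentials entirely. It observes that each pending $D_{\frac{p\,n_d(2)}{n_d(k)}}(\frac{u'_i}{2},B^{u'_i})$ is produced by a Second H\"older step \eqref{180218e7.48} which simultaneously spits out a companion $A_p(u'_i,B^r,u'_i)$-terminal at the \emph{same} scale $u'_i$ with comparable weight; hence $\sum_i u'_i\theta'_i\lesim \sum_{i:\,b_i>L} b_i\gamma_i$ for an $L$ that tends to infinity with the depth. Since $\sum_i b_i\gamma_i$ is finite (it is bounded by a constant multiple of $\lambda$ via the $\mc{S}_{d,2}$-decoupling loss \eqref{180218e7.49}, and $\lambda<\Lambda_{d,k}$ by Lemma~\ref{180218lem7.3}), its tail goes to zero. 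So the paper reduces to convergence of a single series rather than a rate comparison. If you want to salvage your approach, you must either prove the spectral bound $\rho(\mc{M})<1$ directly, or replace the crude $\max_i u'_i\cdot\Sigma_n$ estimate by the weighted sum $\sum_i u'_i\theta'_i$ itself and show \emph{that} satisfies a contraction---which again is Lemma~\ref{180218lem8.1} in disguise. (A minor point: $\alpha_l,\beta_l$ are defined purely in terms of $n_d(\cdot)$ and $p_d(\cdot)$ and do not depend on $p$, so the hypothesis $p>2n_d(k)/n_d(1)$ plays no role in bounding them away from $\{0,1\}$.)
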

We apply Lemma \ref{180216lem7.3} to \eqref{0205e3.18}--\eqref{180216e7.41}.
For a given small positive $\epsilon>0$, we run the ball-inflation appropriately such that
\beq
\sum_{i} u'_i\theta'_i <\epsilon.
\endeq
Hence we obtain
\begin{align}\label{180216e7.45} A_{p}(1, B^r, 1) & \lesim \delta^{-\lambda-\epsilon} \left(\prod_{i=0}^{r_1} A_{p}(b_i, B^r, b_i)^{\gamma_i} \right)\left(\prod_{i=0}^{r_2} D_{p}(d_i, B^r)^{\tau_i} \right)\\
\label{180216e7.46}			& \prod_{i=1}^{r_3} D_p(\frac{u'_i}{2}, B^r)^{\theta'_i} \prod_{i=1}^{r_4} D_p(\frac{u''_i}{3}, B^r)^{\theta''_i}
\end{align}
We rename $u'_i, u''_i, \theta'_i$ and $\theta''_i$
and write \eqref{180216e7.45}--\eqref{180216e7.46} as
\beq
A_{p}(1, B^r, 1) \lesim \delta^{-\lambda-\epsilon} \left(\prod_{i=0}^{r_1} A_{p}(b_i, B^r, b_i)^{\gamma_i} \right)\left(\prod_{i=0}^{r_2} D_{p}(d_i, B^r)^{\tau_i} \right) \left(\prod_{i=0}^{r_5} D_{p}(u_i, B^r)^{\theta_i} \right).
\endeq
Later in Lemma \ref{180205lemma7.5} we will see that the contribution from \eqref{180216e7.46} will also be ``negligible". \\

\begin{proof}[Proof of Lemma \ref{180216lem7.3}.]
From Figure \ref{figure-tree} and the step of \eqref{0105e2.50}, we see that whenever a term $D_{\frac{p\cdot n_d(2)}{n_d(k)}}(\frac{u'_i}{2}, B^{u'_i})$ is produced, there is always one companion term $A_p(u'_i, B^r, u'_i)$ that is also produced. These two terms correspond to two nodes in Figure \ref{figure-tree} which bifurcate from a node $q_{d, k}(2)$. To be precise, at this step, we applied once the Second H\"older inequality
\beq\label{180218e7.48}
D_{q_{d, k(2)}}(\frac{u'_i}{2}, B^{u'_i}) \lesim \left(\frac{1}{\delta} \right)^{\text{some irrelevant power}}
D^{\beta_2}_{\frac{n_d(2)p}{n_d(k)}}(\frac{u'_i}{2}, B^{u'_i}) \times D^{1-\beta_2}_{q_{d, k}(1)}(\frac{u'_i}{2}, B^{u'_i}).
\endeq
Afterwards, we applied an $L^2$ orthogonality argument, and the term $D_{q_{d, k}(1)}(\frac{u'_i}{2}, B^{u'_i})$ evolved into $A_p(u'_i, B^r, u'_i)$. Hence we are able to find a large constant $L$, depending on $r, r_1, r_2, r_3$ and $r_4$, such that
\beq
\sum_{i}u'_i \theta'_i \le 100 (\sum_{i: b_i>L} b_i \gamma_i).
\endeq
Moreover, $L\to \infty$ as $r, r_1, r_2, r_3, r_4\to \infty$.  Hence Lemma \ref{180216lem7.3} follows if we can prove
\beq\label{180216e7.48}
\lim_{r_1\to \infty} \sum_{i=0}^{r_1} b_i \gamma_i<\infty,
\endeq
as every $b_i\gamma_i$ is positive. Indeed, we will prove something much stronger in Lemma \ref{180205lemma6.4}. For the purpose of deriving Lemma \ref{180216lem7.3}, the statement of \eqref{180216e7.48} is enough.

Let us recall how $D_{q_{d, k(2)}}(\frac{u'_i}{2}, B^{u'_i})$ was generated. First of all, it bifurcated from a node denoted by $\frac{p\cdot n_d(2)}{n_d(k)}$ through a First H\"older inequality. Afterwards, we applied a decoupling inequality for the surface $\mc{S}_{d, 2}$:
\beq\label{180218e7.49}
D_{q_{d, k(2)}}(\frac{u'_i}{3}, B^{u'_i}) \lesim \delta^{-\frac{u'_i}{6} \Gamma_{d, 2}(q_{d, k}(2))}D_{q_{d, k(2)}}(\frac{u'_i}{2}, B^{u'_i}).
\endeq
Hence it follows that
\beq
\lim_{r_1\to \infty} \sum_{i=0}^{r_1} b_i \gamma_i <100 \lambda.
\endeq
In end end we apply the uniform bound for $\lambda$ obtained in Lemma \ref{180218lem7.3}.\\
\end{proof}

We calculate the value of $\lambda$. Notice that $\lambda$ will increase as $r, r_1, r_2, r_3$ and $r_4$ increase. Moreover, Lemma \ref{180218lem7.3} implies that
\beq
\lambda_0:=\lim_{r, r_1, r_2, r_3, r_4\to \infty} \lambda
\endeq
is a finite number. We bound $\lambda$ by $\lambda_0$ and calculate the value of $\lambda_0$. We turn to Figure \ref{figure-tree}. As previously we were iterating the estimates from \eqref{0725e4.27f} to \eqref{first-ball}, the value of $\lambda_0$ will consequently be given by an iterative formula.

We let $w_1$ be a weight attached to the root of the tree in Figure \ref{figure-tree}. It collects all the losses in $\delta^{-1}$ that are generated after the root node $\frac{p\cdot n_d(1)}{n_d(k)}$ is created. Hence
\beq\label{180206e7.37}
\lambda_0=w_1+d(\frac{1}{q_{d, k}(1)}-\frac{n_d(k)}{n_d(1)p}).
\endeq
The weight $w_1$ collects both contributions from $\eta_1$ and $w_2$. We first applied the First H\"older inequality in \eqref{0105e2.44} and lost a power
\beq
 d(1-\alpha_1)(\frac{p_d(k)}{p\cdot p_d(1)}-\frac{1}{q_{d, k}(1)})
\endeq
in $\delta^{-1}$. Secondly, associated to $\eta_1$, we applied an $L^2$ orthogonality argument in \eqref{0105e2.45} and lost a power
\beq
\Gamma_{d, 1}(q_{d, k}(1))(1-\alpha_1)
\endeq
in $\delta^{-1}$. Hence we obtain
\beq\label{0726e4.42h}
w_1=\Gamma_{d, 1}(q_{d, k}(1))(1-\alpha_1)+
\alpha_1\cdot w_2+ d(1-\alpha_1)(\frac{p_d(k)}{p\cdot p_d(1)}-\frac{1}{q_{d, k}(1)}).
\endeq
For $w_2$, we first applied the First H\"older inequality with $l=2$ in \eqref{0105e2.46}, and lost a power
\beq
d(1-\alpha_2)(\frac{p_d(k)}{p\cdot p_d(2)}-\frac{1}{q_{d, k}(2)})
\endeq
in $\delta^{-1}$, and then applied one step of a lower-dimensional decoupling inequality as in \eqref{0105e2.48}, and lost a power
\beq
\frac{1}{2}\Gamma_{d, 2}(q_{d, k}(2))(1-\alpha_2)
\endeq
in $\delta^{-1}$. Hence
\beq
w_2=\frac{1}{2}\Gamma_{d, 2}(q_{d, k}(2))(1-\alpha_2)+
(1-\alpha_2)\eta_2+\alpha_2\cdot w_3+ d(1-\alpha_2)(\frac{p_d(k)}{p\cdot p_d(2)}-\frac{1}{q_{d, k}(2)})
\endeq
Similarly, we obtain an equation for $w_3$,
\beq
w_3=\frac{1}{3}\Gamma_{d, 3}(q_{d, k}(3))(1-\alpha_3)+
(1-\alpha_3)\eta_3+ d(1-\alpha_3)(\frac{p_d(k)}{p\cdot p_d(3)}-\frac{1}{q_{d, k}(3)}).
\endeq
Now we derive equations for $\eta_i$. First of all, $\eta_1$ is associated to the term $A_{p}(2, B, 2)$, which does not contribute to $\delta^{-\lambda_0}$. Hence $\eta_1=0$. Next, for $\eta_2$, in the estimate \eqref{0105e2.50}, we applied the Second H\"older inequality with $l=2$, which contributes
\beq
d\cdot \frac{3}{2}\cdot \left(\frac{1}{q_{d, k}(2)}-\frac{p_d(k)}{p_d(2)\cdot p} \right)-d\cdot \frac{3}{2}\cdot \left(\frac{1}{q_{d, k}(1)}-\frac{p_d(k)}{p_d(1)\cdot p} \right)(1-\beta_2).
\endeq
We also used an $L^2$ orthogonality argument, which contributes
\beq
\frac{3}{2} \Gamma_{d, 1}(q_{d, k}(1)) (1-\beta_2)
\endeq
Hence
\beq
\begin{split}
\eta_2& = \frac{3}{2} \Gamma_{d, 1}(q_{d, k}(1)) (1-\beta_2)+\frac{3}{2} \beta_2 w_2+\\
& d\cdot \frac{3}{2}\cdot \left(\frac{1}{q_{d, k}(2)}-\frac{p_d(k)}{p_d(2)\cdot p} \right)-d\cdot \frac{3}{2}\cdot \left(\frac{1}{q_{d, k}(1)}-\frac{p_d(k)}{p_d(1)\cdot p} \right)(1-\beta_2)
\end{split}
\endeq
Similarly, for $\eta_3$, we have
\beq\label{0726e4.46h}
\begin{split}
\eta_3& = \frac{2}{3} \Gamma_{d, 2}(q_{d, k}(2)) (1-\beta_3)+
\frac{4}{3} (1-\beta_3)\eta_2+\frac{4}{3} \beta_3 w_3\\
& + d\cdot \frac{4}{3}\cdot \left(\frac{1}{q_{d, k}(3)}-\frac{p_d(k)}{p_d(3)\cdot p} \right)-d\cdot \frac{4}{3}\cdot \left(\frac{1}{q_{d, k}(2)}-\frac{p_d(k)}{p_d(2)\cdot p} \right)(1-\beta_3)
\end{split}
\endeq
By the equations from \eqref{0726e4.42h} to \eqref{0726e4.46h}, we are able to calculate the constant $\lambda_0$ for the case $k=4$.\\

For the more general dimension $d$ and degree $k$, we obtain
\beq\label{iteration-formula}
\begin{split}
w_l=& \frac{1}{l}\Gamma_{d, l}\left(q_{d, k}(l)\right)(1-\alpha_l)+
(1-\alpha_l) \eta_l +\alpha_{l} w_{l+1}+\\
& \hspace{6cm} d(1-\alpha_l)(\frac{p_d(k)}{p\cdot p_d(l)}-\frac{1}{q_{d, k}(l)}): 1\le l\le k-1;\\
\eta_l=&
d\cdot \frac{l+1}{l}\cdot \left(\frac{1}{q_{d, k}(l)}-\frac{p_d(k)}{p_d(l)\cdot p} \right)-d\cdot \frac{l+1}{l}\cdot \left(\frac{1}{q_{d, k}(l-1)}-\frac{p_d(k)}{p_d(l-1)\cdot p} \right)(1-\beta_l)+\\
& + \frac{(l+1)}{l(l-1)}\Gamma_{d, l-1}\left(q_{d, k}(l-1)\right) (1-\beta_l)+\frac{l+1}{l}\eta_{l-1}(1-\beta_l)+\frac{l+1}{l} \beta_l w_l: 2\le l\le k-1;\\
\eta_1=& 0; w_{k}=0.
\end{split}
\endeq
By solving this system of linear equations, we will be able to find the exact value of $\lambda_0$.
\begin{rem}
In Remark \eqref{180203rem6.2} and Remark \eqref{180205rem6.3}, we commented on two H\"older inequalities proved in Lemma \ref{first-holder} and Lemma \ref{second-holder} separately, and why we applied H\"older in those particular ways. The price we need to pay is that the iterative formula \eqref{iteration-formula} looks a bit complicated.
\end{rem}

\hspace{2cm}

\noindent {\bf The last round of iterations.}
So far we have obtained
\beq\label{180216e7.62}
A_{p}(1, B^r, 1) \lesim \delta^{-\lambda_0-\epsilon} \left(\prod_{i=0}^{r_1} A_{p}(b_i, B, b_i)^{\gamma_i} \right)\left(\prod_{i=0}^{r_2} D_{p}(d_i, B)^{\tau_i} \right)\left(\prod_{i=0}^{r_5} D_{p}(u_i, B^r)^{\theta_i} \right).
\endeq
Recall that $\lambda_0$ can be calculated by the iterative formula \eqref{iteration-formula}. The estimate will be iterated. To avoid producing unnecessarily long terms, we introduce some further notation to simplify \eqref{180216e7.62}. Define
\beq
d_{i+r_2}:=u_i \text{ and } \tau_{i+r_2}:=\theta_i.
\endeq
Under this notation, \eqref{180216e7.62} can be rewritten as
\beq\label{180216e7.64}
A_{p}(1, B^r, 1) \lesim \delta^{-\lambda_0-\epsilon} \left(\prod_{i=0}^{r_1} A_{p}(b_i, B, b_i)^{\gamma_i} \right)\left(\prod_{i=0}^{r_6} D_{p}(d_i, B)^{\tau_i} \right),
\endeq
with $r_6=r_2+r_5$. Let $u$ be a small positive number. By renaming our frequency scales, we also obtain
\beq
A_{p}(u, B, u) \lesim \delta^{-u\lambda_0-\epsilon} \left(\prod_{i=0}^{r_1} A_{p}(ub_i, B, ub_i)^{\gamma_i} \right)\left(\prod_{i=0}^{r_6} D_{p}(u d_i, B)^{\tau_i} \right),
\endeq
for every ball $B$ with a large enough radius. Moreover, if we take $u$ to be small enough, then $B$ can be taken to be a ball of radius $\delta^{-k}$.\\

Now we iterate the above estimate  $W$ times, and obtain
\beq\label{0405e2.46}
\begin{split}
A_p(u, B, u)& \lesim_{\epsilon,r,M} \delta^{-u \lambda_0-\epsilon} \left(\prod_{j_1=0}^{r_1} \delta^{-u \lambda_0 b_{j_1}\gamma_{j_1}} \right)\times  \\ & \ldots\\& \left(\prod_{j_1=0}^{r_1}\prod_{j_2=0}^{r_1}... \prod_{j_{W-1}=0}^{r_1} \delta^{-u \lambda_0 b_{j_1}b_{j_2}... b_{j_{W-1}}\gamma_{j_1}\gamma_{j_2}... \gamma_{j_{W-1}}} \right)\times \\ &
\left( \prod_{j_1=0}^{r_6} D_p(u  d_{j_1}, B)^{\tau_{j_1}}\right) \left( \prod_{j_1=0}^{r_6} \prod_{j_2=0}^{r_1} D_p(u \cdot  d_{j_1}b_{j_2}, B)^{\tau_{j_1}\gamma_{j_2}}\right)\times \\
				& \dots \\
				& \left( \prod_{j_1=0}^{r_6} \prod_{j_2=0}^{r_1}\dots \prod_{j_W=0}^{r_1} D_p(u d_{j_1}b_{j_2}... b_{j_W}, B)^{\tau_{j_1}\gamma_{j_2}... \gamma_{j_W}}\right)\times \\
				& \left( \prod_{j_1=0}^{r_1} \prod_{j_2=0}^{r_1}\dots \prod_{j_W=0}^{r_1} A_p(u\cdot b_{j_1}b_{j_2}... b_{j_W}, B, u\cdot b_{j_1}b_{j_2}... b_{j_W})^{\gamma_{j_1}\gamma_{j_2}... \gamma_{j_W}}\right).
\end{split}
\endeq

We start to process the long product \eqref{0405e2.46}. It is similar to the calculation in Page 864--865 in \cite{BDG-2}. We will divide the analysis into three steps. In the first step, we collect all the powers of $\frac{1}{\delta}$. In the second, we use a rescaling argument to handle all the $D_p$-terms. In the last step, we deal with the remaining $A_p$-terms.\\

\noindent {\it Collecting the powers of $\frac{1}{\delta}$.} We obtain
\beq
\begin{split}
& u\lambda_0 + u\lambda_0 (\sum_{j=0}^{r_1} b_j \gamma_j)+\dots + u \lambda_0 (\sum_{j=0}^{r_1} b_j \gamma_j)^{W-1}= u \lambda_0\cdot  \frac{1-(\sum_{j=0}^{r_1} b_j \gamma_j)^W}{1-(\sum_{j=0}^{r_1} b_j \gamma_j)}.
\end{split}
\endeq

\noindent {\it The contribution from the $D_p$-terms.} By  parabolic rescaling (Lemma \ref{abc18}), the product of all these $D_p$-terms can be controlled by
\beq
\begin{split}
&\left( \prod_{j_1=0}^{r_6} V_p(\delta^{1-u d_{j_1}})^{\tau_{j_1}} D_p(1, B)^{\tau_{j_1}}\right) \times \left(\prod_{j_1=0}^{r_6}\prod_{j_2=0}^{r_1} V_p(\delta^{1-u d_{j_1}b_{j_2}})^{\tau_{j_1}\gamma_{j_2}} D_p(1, B)^{\tau_{j_1}\gamma_{j_2}} \right)\\
&\times \dots \times \left(\prod_{j_1=0}^{r_6}\prod_{j_2=0}^{r_1}\dots \prod_{j_W=0}^{r_1} V_p(\delta^{1-u d_{j_1}b_{j_2}\dots b_{j_W}})^{\tau_{j_1}\gamma_{j_2}\dots \gamma_{j_W}} D_p(1, B)^{\tau_{j_1}\gamma_{j_2}\dots \gamma_{j_W}} \right)\\
& \lesim \left( \prod_{j_1=0}^{r_6} V_p(\delta^{1-u d_{j_1}})^{\tau_{j_1}} \right) \times \left(\prod_{j_1=0}^{r_6}\prod_{j_2=0}^{r_1} V_p(\delta^{1-u d_{j_1}b_{j_2}})^{\tau_{j_1}\gamma_{j_2}} \right) \times \dots\\
&  \times \left(\prod_{j_1=0}^{r_6}\prod_{j_2=0}^{r_1}\dots \prod_{j_W=0}^{r_1} V_p(\delta^{1-u d_{j_1}b_{j_2}\dots b_{j_W}})^{\tau_{j_1}\gamma_{j_2}\dots \gamma_{j_W}}  \right) \Big(D_p(1, B)\Big)^{1-(\sum_{j=0}^{r_1} \gamma_j)^W}
\end{split}
\endeq

\noindent {\it The contribution from the $A_p$-term.} By invoking H\"older's inequality this term can be bounded by
\beq
\prod_{j_1=0}^{r_1} \dots \prod_{j_W=0}^{r_1} (\frac{1}{\delta})^{d\cdot u\cdot b_{j_1}... b_{j_W} \gamma_{j_1}... \gamma_{j_W}}\left[ D_p(b_{j_1}\dots b_{j_W}u, B) \right]^{\gamma_{j_1}\dots \gamma_{j_W}}.
\endeq
To control the $D_p$ term, we again invoke the parabolic rescaling, and bound the last expression by
\beq
(\frac{1}{\delta})^{d\cdot u(\sum_{j=0}^{r_1} b_j \gamma_j)^W}\prod_{j_1=0}^{r_1} \dots \prod_{j_W=0}^{r_1} \Big( V_p(\delta^{1-ub_{j_1}\dots b_{j_W}}) \Big)^{\gamma_{j_1}\dots \gamma_{j_W}} \Big(D_{p}(1, B) \Big)^{\gamma_{j_1}\dots \gamma_{j_W}}.
\endeq

We summarize what we have proven so far as follows.
\begin{prop}\label{0712prop1.10}
Fix $d\ge 2$ and $k\ge 2$. For each $p>\frac{2n_d(k)}{n_d(1)}$, each ball $B$ of radius $\delta^{-k}$, and each sufficiently small $u$, we have
\beq\label{3004e2.77}
\begin{split}
& A_p(u, B, u)\lesim (\frac{1}{\delta})^{\epsilon+u \lambda_0\cdot  \frac{1-(\sum_{j=0}^{r_1} b_j \gamma_j)^W}{1-(\sum_{j=0}^{r_1} b_j \gamma_j)}+du(\sum_{j=0}^{r_1} b_j \gamma_j)^W} D_p(1, B)\times  \left( \prod_{j_1=0}^{r_6} V_p(\delta^{1-u d_{j_1}})^{\tau_{j_1}} \right) \times \\
 & \left(\prod_{j_1=0}^{r_6}\prod_{j_2=0}^{r_1} V_p(\delta^{1-u d_{j_1}b_{j_2}})^{\tau_{j_1}\gamma_{j_2}} \right) \times \dots \times \left(\prod_{j_1=0}^{r_6}\prod_{j_2=0}^{r_1}\dots \prod_{j_W=0}^{r_1} V_p(\delta^{1-u d_{j_1}b_{j_2}\dots b_{j_W}})^{\tau_{j_1}\gamma_{j_2}\dots \gamma_{j_W}}  \right)\\
 &  \left(\prod_{j_1=0}^{r_1} \dots \prod_{j_W=0}^{r_1}  \Big( V_p(\delta^{1-ub_{j_1}\dots b_{j_W}} )\Big)^{\gamma_{j_1}\dots \gamma_{j_W}} \right) .
\end{split}
\endeq
Here $r_1$ and $r_6$ are two extremely large numbers that will be chosen later.
\end{prop}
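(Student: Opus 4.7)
The plan is to prove Proposition \ref{0712prop1.10} by iterating, $W$ times, the single-step estimate
\beq\label{0712single-step-proposal}
A_p(u, B, u) \lesim \delta^{-u\lambda_0 - \epsilon} \left(\prod_{i=0}^{r_1} A_p(u b_i, B, u b_i)^{\gamma_i}\right)\left(\prod_{i=0}^{r_6} D_p(u d_i, B)^{\tau_i}\right)
\endeq
obtained at the close of the ball-inflation argument, and then processing the resulting multi-indexed product in three stages. The key observation making iteration legal is that \eqref{0712single-step-proposal} holds uniformly in the scale parameter $u$ (so long as $u$ is small enough that the auxiliary scales $\delta^{1-u d_i}, \delta^{1-u b_i}$ still lie in $(0,1)$), so substituting $u \mapsto u b_{j_1}$ inside each $A_p$-factor produces another valid copy of the same inequality. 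Carrying out such substitutions $W$ times yields exactly the long product \eqref{0405e2.46}.

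First I would collect all $\delta^{-1}$-exponents. At iteration step $\ell \in \{0, 1, \ldots, W-1\}$, the newly-generated exponent is $u\lambda_0 \cdot S^\ell$, where $S := \sum_{j=0}^{r_1} b_j \gamma_j$; summing the geometric series yields the total $u\lambda_0 \cdot \frac{1-S^W}{1-S}$, matching the first $\delta^{-1}$-exponent in \eqref{3004e2.77}. Next, apply parabolic rescaling (Lemma \ref{abc18}) to each $D_p$-factor of the form $D_p(u d_{j_1} b_{j_2}\cdots b_{j_\ell}, B)$, replacing it by $V_p(\delta^{1-u d_{j_1} b_{j_2}\cdots b_{j_\ell}}) \cdot D_p(1, B)$ and producing the first two $V_p$-products in \eqref{3004e2.77}. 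Then treat the terminal $A_p$-factors: a standard application of H\"older's inequality (passing from the $L^p$ average to an $L^p$ norm) gives
\[
A_p(u b_{j_1}\cdots b_{j_W}, B, u b_{j_1}\cdots b_{j_W}) \lesim \delta^{-d u b_{j_1}\cdots b_{j_W}} \, D_p(u b_{j_1}\cdots b_{j_W}, B),
\]
whose $\delta^{-1}$-contributions sum to $du \cdot S^W$ (the second exponent in \eqref{3004e2.77}), and one more parabolic rescaling converts the remaining $D_p$-factors into the third $V_p$-product.

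The main obstacle is the final bookkeeping step: the many $D_p(1, B)$-factors produced in the last two stages must combine into the single $D_p(1, B)^1$ appearing in \eqref{3004e2.77}. A direct calculation gives a $D_p(1, B)$-exponent of $(\sum_j \tau_j) \cdot \frac{1-(\sum_j \gamma_j)^W}{1-\sum_j \gamma_j}$ from the $D_p$-stage and $(\sum_j \gamma_j)^W$ from the terminal $A_p$-stage. The iteration is designed so that every application of H\"older in the preceding ball-inflation steps uses a probability vector of exponents, which forces the identity $\sum_{j=0}^{r_1} \gamma_j + \sum_{j=0}^{r_6} \tau_j = 1$; substituting this in collapses the two contributions to $D_p(1, B)^1$, finishing the proof. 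This identity is the only non-trivial algebraic fact needed; everything else is routine telescoping of geometric series.
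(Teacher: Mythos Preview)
Your proposal is correct and follows essentially the same route as the paper: iterate the single-step estimate $W$ times to obtain the long product \eqref{0405e2.46}, then process it in the same three stages (collect the $\delta^{-1}$-exponents into a geometric series, apply parabolic rescaling to the $D_p$-factors, and use H\"older plus one more parabolic rescaling on the terminal $A_p$-factors). The only point you make more explicit than the paper is the identity $\sum_j \gamma_j + \sum_j \tau_j = 1$, which the paper uses silently when it writes the combined $D_p(1,B)$-exponent as $1-(\sum_j\gamma_j)^W$; your justification via the probability-vector structure of the H\"older weights is exactly right.
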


\vspace{1cm}

\noindent {\bf The final step of the proof.} Now we come to the final step of the proof for the desired decoupling inequality at the exponent $p$. We will combine  Theorem \ref{0723theorem4.6h} with  Proposition \ref{0712prop1.10}.  Let $\eta_p$ be the unique number such that
\beq\label{0705g2.52}
\lim_{\delta\to 0} \frac{V^{(d, k)}(\delta, p)}{\delta^{-(\eta_p+\mu)}}=0, \text{ for each }\mu>0,
\endeq
and
\beq
\limsup_{\delta\to 0} \frac{V^{(d, k)}(\delta, p)}{\delta^{-(\eta_p-\mu)}}=\infty, \text{ for each }\mu>0.
\endeq
Let $B$ have radius $\delta^{-k}$. We substitute the bound $V^{(d, k)}(\delta, p)\lesim_{\mu} \delta^{-(\eta_p+\mu)} $ into the right hand side of \eqref{3004e2.77}, and obtain
\beq
A_p(u, B, u)\lesim_{r_1, r_6, K,\mu, W} \delta^{-\eta_{p, \mu, u, r_1, r_6, W}} D_p(1, B),
\endeq
where
\beq\label{0712e1.60}
\begin{split}
\eta_{p, \mu, u,r_1, r_6, W}& =u \lambda_0\cdot  \frac{1-(\sum_{j=0}^{r_1} b_j \gamma_j)^W}{1-(\sum_{j=0}^{r_1} b_j \gamma_j)} +du(\sum_{j=0}^{r_1} b_j \gamma_j)^W\\
	& + (\mu+\eta_p) \left[ 1-u\cdot (\sum_{j=0}^{r_1} b_j \gamma_j)^W -u(\sum_{j=0}^{r_6} d_j \tau_j) \frac{1-(\sum_{j=0}^{r_1} b_j \gamma_j)^W}{1-(\sum_{j=0}^{r_1} b_j \gamma_j)} \right].
\end{split}
\endeq
Recall that $M$ is an integer from $[K, K^d]$, and $R_1, \dots, R_M$ are cubes from $Col_K$ that are $\nu_K$-transverse. By Cauchy--Schwarz,
\beq
\begin{split}
& \left\| (\prod_{i=1}^{M} E_{R_i} g)^{\frac{1}{M}} \right\|_{L_{\#}^p(w_B)} \le \delta^{-d u} \left\| (\prod_{i=1}^{M} \sum_{R_{i, u}\subset R_i} |E_{R_{i, u}}g|^{q_{d, k}(1)})^{\frac{1}{ q_{d, k}(1) M}} \right\|_{L_{\#}^p(w_B)}\\
& \lesim \delta^{-d u} \left( \frac{1}{|\mathcal{B}_u(B)|} \sum_{B^u\in \mathcal{B}_u(B)}\left\| (\prod_{i=1}^{M} \sum_{R_{i, u}\subset R_i} |E_{R_{i, u}}g|^{q_{d, k}(1)})^{\frac{1}{M\cdot q_{d, k}(1)}} \right\|_{L_{\#}^p(w_{B^u})}^p \right)^{\frac{1}{p}} .
\end{split}
\endeq
By H\"older and Minkowski, this can be further bounded by
\beq
 \delta^{-du } \left( \frac{1}{|\mathcal{B}_u(B)|} \sum_{B^u\in \mathcal{B}_u(B)} D_{q_{d, k}(1)}(u, B^u)^p \right)^{\frac{1}{p}}= \delta^{-du} A_p(u, B, u).
\endeq
Moreover, in the above step, we have used the fact that $|E_{R_{i, u}}g|$ is essentially a constant on each ball of radius $\delta^{-u}$. So far we have obtained
\beq
\| (\prod_{i=1}^M E_{R_i} g)^{\frac{1}{M}} \|_{L_{\#}^p(w_B)}  \lesim_{r_1, r_6, K,\mu, W} \delta^{-d u-\eta_{p, \mu, u,r_1, r_6,W}} D_p(1, B).
\endeq
We recall that both sides depend on $g$ and $R_i$.
By taking the supremum over $g$, $R_i$ and $K\le M\le K^d$ (with fixed $K$) in the above estimate, we obtain
\beq
\label{abc43}
V^{(d, k)}(\delta, p, \nu_K)\lesim_{r_1, r_6, K,\mu, W} \delta^{-\tilde{\eta}_{p, \mu, u,r_1, r_6, W}},
\endeq
where
\beq
\tilde{\eta}_{p, \mu, u,r_1, r_6, W}:=\eta_{p, \mu, u,r_1, r_6, W}+ du.
\endeq
We move $\eta_p$ from the right hand side of the expression \eqref{0712e1.60} to the left hand  side, and then divide both sides by $u$ to obtain
\beq\label{0712e1.65}
\begin{split}
\frac{1}{u}(\tilde{\eta}_{p, \mu, u,r_1, r_6, W}-\eta_p)& =d+\frac{\mu}{u}+\lambda_0\cdot  \frac{1-(\sum_{j=0}^{r_1} b_j \gamma_j)^W}{1-(\sum_{j=0}^{r_1} b_j \gamma_j)}+d(\sum_{j=0}^{r_1} b_j \gamma_j)^W\\
		& -(\mu+\eta_p) \left[ (\sum_{j=0}^{r_1} b_j \gamma_j)^W +(\sum_{j=0}^{r_6} d_j \tau_j) \frac{1-(\sum_{j=0}^{r_1} b_j \gamma_j)^W}{1-(\sum_{j=0}^{r_1} b_j \gamma_j)} \right].
\end{split}
\endeq
Our goal is to show that
\beq\label{0712f1.66}
\eta_{p}\le \max\{(\frac{1}{2}-\frac{1}{p})d, \max_{1\le j\le d}\{(1-\frac{1}{p}) j-\frac{\mc{K}_{j, k}}{p}\}\}.
\endeq
We argue by contradiction. Suppose for contradiction that
\beq
\label{abc50}
\eta_{p}> \max\{(\frac{1}{2}-\frac{1}{p})d, \max_{1\le j\le d}\{(1-\frac{1}{p}) j-\frac{\mc{K}_{j, k}}{p}\}\}.
\endeq
We rewrite the right hand side of \eqref{0712e1.65} as
\beq
\label{abc38}
\underbrace{\Big( \lambda_0-(\mu+\eta_p)(\sum_{j=0}^{r_6} d_j \tau_j) \Big) \frac{1-(\sum_{j=0}^{r_1} b_j \gamma_j)^W}{1-(\sum_{j=0}^{r_1} b_j \gamma_j)}}_{(\star)} +d+\frac{\mu}{u}+(d-\mu-\eta_p)(\sum_{j=0}^{r_1} b_j \gamma_j)^W.
\endeq
It transpires that the term $(\star)$ is dominant. Next we will calculate the crucial quantity $\sum_{j=0}^{\infty}b_j \gamma_j.$ The two crucial features for this quantity are as follows.
\begin{lem}\label{180205lemma6.4}
Under the previous notation,
\beq
\label{abc42}
\sum_{j=0}^{\infty}b_j \gamma_j=1.
\endeq
\end{lem}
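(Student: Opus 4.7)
The plan is to recognize $\sum_{j=0}^\infty b_j\gamma_j$ as the output of a two-scalar-per-level recursion on the tree of Figure~\ref{figure-tree}, and then to reduce the claim to a single $k$-independent algebraic identity.

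Each $A_p(b_j,B^r,b_j)^{\gamma_j}$ in the tree originates from a unique $q_{d,k}(1)$-node at some scale $b_j/2$ and weight $\gamma_j$: $L^2$-orthogonality doubles the frequency scale and ball-inflation (Lemma~\ref{0713flemma1.6}) then replaces the $D$-expression by an $A_p$-expression, preserving the weight. Because scale and weight propagate multiplicatively through the tree, the sum is controlled by defining $\tilde S_l$ (resp.\ $\tilde T_l$) to be the total contribution to $\sum_j b_j\gamma_j$ produced by one occurrence of the node $\tfrac{p\,n_d(l)}{n_d(k)}$ (resp.\ $q_{d,k}(l)$) at scale $1$ and weight $1$. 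Reading off the First and Second H\"older bifurcations (Lemmas~\ref{first-holder}--\ref{second-holder}) interleaved with the lower-dimensional decoupling for $\mathcal{S}_{d,l}$, one arrives at the recursions
\begin{align*}
\tilde S_l &= (1-\alpha_l)\tilde T_l+\alpha_l\tilde S_{l+1},\qquad 1\le l\le k-1,\quad \tilde S_k=0,\\
\tilde T_l &= \mu_l\bigl(\beta_l\tilde S_l+(1-\beta_l)\tilde T_{l-1}\bigr),\qquad 2\le l\le k-1,\quad \tilde T_1=2,
\end{align*}
with $\mu_l=(l+1)/l$, and the claim becomes $\tilde S_1=1$.

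I would prove by downward induction on $l\in\{1,\dots,k-1\}$ that $\tilde S_l/\tilde T_l=c_l$, where
\[
c_l:=\frac{(n_d(l+1)-n_d(l))\,p_d(l)}{(p_d(l+1)-p_d(l))\,n_d(l)}=\frac{l\binom{d+l}{d-1}}{(d+1)\bigl(\binom{d+l}{d}-1\bigr)}
\]
is independent of $k$. The base case $l=k-1$ is $\tilde S_{k-1}=(1-\alpha_{k-1})\tilde T_{k-1}$, and substituting $r_k=q_k=1$ into the defining relation for $\alpha_{k-1}$ gives $1-\alpha_{k-1}=c_{k-1}$ directly. For the inductive step, combining the $\tilde T_{l+1}$-recursion with the hypothesis $\tilde S_{l+1}=c_{l+1}\tilde T_{l+1}$ yields $\tilde T_{l+1}/\tilde T_l=\mu_{l+1}(1-\beta_{l+1})/(1-\mu_{l+1}\beta_{l+1}c_{l+1})$, and the claim reduces to the scalar identity
\[
c_l=(1-\alpha_l)+\frac{\alpha_l\,\mu_{l+1}(1-\beta_{l+1})\,c_{l+1}}{1-\mu_{l+1}\beta_{l+1}c_{l+1}}.
\]
Once this is proved, specialising to $l=1$ gives $c_1=\binom{d+1}{d-1}/((d+1)d)=\tfrac12$, and hence $\tilde S_1=c_1\tilde T_1=1$, as required.

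The main obstacle is the scalar identity above. Although $\alpha_l$ and $\beta_{l+1}$ both depend on $k$ through $n_d(k)$ and $p_d(k)$, the right-hand side is supposed to collapse to the $k$-independent quantity $c_l$. To carry out the verification I would introduce $x_l:=n_d(k)/n_d(l)$, $y_l:=p_d(k)/p_d(l)$ so that
\[
\alpha_l=\frac{y_l-x_l}{y_l-x_{l+1}},\qquad \beta_{l+1}=\frac{y_{l+1}-y_l}{x_{l+1}-y_l},\qquad c_l=\frac{(x_l-x_{l+1})\,y_{l+1}}{(y_l-y_{l+1})\,x_{l+1}};
\]
a direct computation yields $c_l-(1-\alpha_l)=\alpha_l(x_l-x_{l+1})y_l(1-\beta_{l+1})/(\beta_{l+1}(y_l-x_l)x_{l+1})$, and substituting this into the identity and clearing denominators makes the common factors involving $n_d(k)$ and $p_d(k)$ cancel in every cross-ratio, reducing the claim to a polynomial relation in $n_d(l),n_d(l+1),n_d(l+2),p_d(l),p_d(l+1),p_d(l+2)$ which is then checked from the explicit formulas $n_d(l)=\binom{d+l}{d}-1$ and $p_d(l)=\tfrac{2l}{d+1}\binom{d+l}{d}$ together with the ratio $p_d(l+1)/p_d(l)=(d+l+1)/l$.
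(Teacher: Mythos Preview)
Your setup is exactly the paper's: your $\tilde S_l,\tilde T_l$ are the paper's $w'_l,\eta'_l$, and your two recursions with boundary data $\tilde T_1=2$, $\tilde S_k=0$ coincide with the system \eqref{180220e9.2}--\eqref{180220e9.5}. Where you diverge is in how you extract $\tilde S_1=1$ from that system. The paper guesses explicit closed-form expressions for every $w'_j$ and $\eta'_j$ (Lemma~\ref{180223lem9.1}) and checks them against the recursion; this verification is somewhat lengthy and relies on the auxiliary identity $p_j n_1 + p_1 s_{j-1}=jp_1 n_j$ (Claim~\ref{180220claim9.2}). Your route is leaner: you observe that only the ratio $c_l=\tilde S_l/\tilde T_l$ is needed, prove it is the $k$-independent quantity $(n_{l+1}-n_l)p_l/((p_{l+1}-p_l)n_l)$ by a downward induction that reduces to a single scalar identity per level, and then anchor at $\tilde T_1=2$. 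The scalar identity does check out (I verified it in the case $d=1$, where $c_l\equiv\tfrac12$, $\alpha_l=(k-l)/(k+1-l)$, $\beta_{l+1}=2(k+1)/((l+2)(k+1-l))$, and both sides collapse cleanly), and your description of the general verification via the substitutions $x_l,y_l$ is the right way to see the $n_d(k),p_d(k)$-dependence cancel.

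One point you should be explicit about: your ratio argument presupposes that the quantities $\tilde S_l,\tilde T_l$ are finite (so that ratios make sense and the recursion holds as an identity of real numbers rather than of $[0,\infty]$-valued objects). In the paper this is supplied by Lemma~\ref{180218lem8.1} (all eigenvalues of $\mathcal M$ have modulus $<1$), whose proof in turn uses the explicit formulas of Lemma~\ref{180223lem9.1} and their entrywise positivity. Your approach does not on its own replace that input; you should either invoke Lemma~\ref{180218lem8.1}/Lemma~\ref{180218lem7.3} for finiteness, or supplement your argument with a direct positivity check (e.g.\ verifying $1-\mu_{l+1}\beta_{l+1}c_{l+1}>0$ and then building the solution forward from $\tilde T_1=2$, which also yields an explicit finite candidate that the eigenvalue argument can then feed on).
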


In addition to this, we have that
\begin{lem}\label{180205lemma7.5}
Under the above notation,
\beq
\label{abc41}
 \frac{\lambda_0}{\sum_{j=0}^{\infty} d_j\tau_j}\le \max\{(\frac{1}{2}-\frac{1}{p})d, \max_{1\le j\le d}\{(1-\frac{1}{p}) j-\frac{\mc{K}_{j, k}}{p}\}\}.
\endeq
\end{lem}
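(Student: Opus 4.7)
The plan is to view both $\lambda_0$ and $\mc{D}:=\sum_{j\ge 0} d_j \tau_j$ as being computed by the \emph{same} tree of Figure \ref{figure-tree}, but with two different weight functions. For $\lambda_0$, each internal node contributes the local $\delta^{-1}$-loss coming from the two H\"older inequalities (Lemmas \ref{first-holder}--\ref{second-holder}), $L^2$-orthogonality, and lower-dimensional decoupling, propagated by the convex coefficients $\alpha_l, 1-\alpha_l, \beta_l, 1-\beta_l$; this gives precisely the recursion \eqref{iteration-formula}. For $\mc{D}$, each node contributes the frequency-scale factor at which its descendant $D_p$-leaf is eventually produced, propagated by the same convex coefficients. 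I would introduce auxiliary unknowns $W_l, E_l$ satisfying the shape of \eqref{iteration-formula} with all $\delta^{-1}$-loss terms replaced by the corresponding scale factors and with the boundary conditions $W_k=0$, $E_1=0$; then $\mc{D}$ is a constant multiple of $W_1$.

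The heart of the argument is a node-by-node comparison: at each level $l$, the ratio of the local $\delta^{-1}$-loss increment to the local $\mc{D}$-increment should be at most $\Gamma_{d, k}(p)$. Here the specific choice $q_{d, k}(l)=\max\{2, p\cdot p_d(l)/p_d(k)\}$ in \eqref{various-index} is decisive. When $q_{d, k}(l)>2$, I plan to verify by direct computation the inequality
$$\frac{\Gamma_{d, l}(q_{d, k}(l))}{l} + (\text{associated H\"older losses}) \le \frac{l+1}{l}\,\Gamma_{d, k}(p).$$
Substituting the definitions of $p_d(l)$, $\mc{K}_{j, l}$ and $\Gamma_{d, l}$, this reduces to the statement that the piecewise-linear graph of $\Gamma_{d, l}(\cdot)$, evaluated at the exponent $p\cdot p_d(l)/p_d(k)$ and rescaled by the factor $l/k$, sits under the graph of $\Gamma_{d, k}(\cdot)$ at the exponent $p$; this is precisely the geometric content of the ``fake kink'' $Q_c$ in Figure \ref{kink-points}, namely that the line segment through $Q_1 Q_2$ (and its higher-dimensional analogues) agrees with the rescaling of the critical slope at $p_d(l)$ for the lower-degree surface. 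When $q_{d, k}(l)=2$ we have $\Gamma_{d, l}(2)=0$, so the inequality is trivial.

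With this pointwise bound established, one concludes by induction on the coupled recursion that $w_l \le \Gamma_{d, k}(p)\cdot W_l$ and $\eta_l \le \Gamma_{d, k}(p)\cdot E_l$ at every level, bootstrapping from the boundary conditions $w_k=0=\Gamma_{d,k}(p)\cdot W_k$ and $\eta_1=0=\Gamma_{d,k}(p)\cdot E_1$. Setting $l=1$ in the first inequality yields $\lambda_0=w_1\le \Gamma_{d, k}(p)\cdot W_1=\Gamma_{d, k}(p)\cdot \mc{D}$, which is the claim.

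The main obstacle I anticipate is the careful bookkeeping of the four separate H\"older losses (two from Lemma \ref{first-holder} and two from Lemma \ref{second-holder}), whose signs depend on whether $q_{d, k}(l)$ or $p\cdot p_d(l)/p_d(k)$ is larger. The clean way to manage these is to exploit the very definition of $\alpha_l$ and $\beta_l$ as conjugate-exponent convex-combination coefficients: by construction, the sum of the four H\"older losses at a given node telescopes across the transitions $l\mapsto l+1$ and $l-1\mapsto l$, so that the net contribution at each node reduces to the single term $\Gamma_{d, l}(q_{d, k}(l))/l$. Once this telescoping is verified by a direct substitution, the remaining inequality is the elementary piecewise-linear comparison mentioned above, which is an immediate consequence of the definition of $\Gamma_{d, k}(p)$ as a maximum over linear functions of $1/p$.
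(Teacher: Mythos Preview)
Your high-level strategy is the same as the paper's: write $\lambda_0$ and $\mc{D}=\sum d_j\tau_j$ as solutions of two linear systems sharing the same homogeneous part (the matrix $\mc{M}$ of \eqref{matrix}), then compare them via a pointwise inequality at each level $l$. Your key local inequality is essentially the paper's Lemma \ref{180222lemma10.4}, namely
\[
\Gamma_{d,l}\bigl(q_{d,k}(l)\bigr)+d\Bigl(\frac{1}{q_{d,k}(l)}-\frac{1}{p}\Bigr)\le \Gamma_{d,k}(p),
\]
and your telescoping claim for the four H\"older losses is morally the content of the paper's Lemma \ref{180222lem10.1}. (Note: the factor $\tfrac{l+1}{l}$ on the right-hand side of your displayed inequality is not correct; the clean form has no such factor.)

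The genuine gap is the final step, where you say ``one concludes by induction on the coupled recursion that $w_l\le \Gamma_{d,k}(p)\cdot W_l$ and $\eta_l\le \Gamma_{d,k}(p)\cdot E_l$''. This induction cannot be carried out as stated. The $w_l$-equation reads $w_l=\alpha_l w_{l+1}+(1-\alpha_l)\eta_l+\text{loss}$, so to bound $w_l$ you need both $w_{l+1}$ (an index \emph{above}) and $\eta_l$; but the $\eta_l$-equation reads $\eta_l=\tfrac{l+1}{l}(1-\beta_l)\eta_{l-1}+\tfrac{l+1}{l}\beta_l w_l+\text{loss}$, which in turn requires $w_l$ itself. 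The two boundary conditions sit at opposite ends ($w_k=0$, $\eta_1=0$), so there is no triangular direction in which to induct, and any attempt to substitute one recursion into the other reproduces the full coupled system rather than reducing it.

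The paper resolves exactly this obstruction by a different mechanism. It forms the difference $\widetilde{w_l}=\bar w_l-\Gamma_{d,k}(p)\,w''_l$, which satisfies the same coupled system with source terms $A_l$ and boundary value $\widetilde{w_k}=-\Gamma_{d,k}(p)$. Instead of inducting, it proves (Claim \ref{180222claim10.2}) that $\widetilde{w_1}$ is a \emph{strictly increasing affine} function of the free parameter $\widetilde{w_k}$; this uses the entrywise nonnegativity of $(\mc{I}_{2k-3}-\mc{M})^{-1}=\sum_{r\ge 0}\mc{M}^r$, which in turn rests on Lemma \ref{180218lem8.1}. One may then swap the roles of $\widetilde{w_1}$ and $\widetilde{w_k}$: impose $\widetilde{w_1}=0$, solve forward to find $\widetilde{w_k}=-\sum_l \lambda_l A_l$, and reduce to showing $\sum_l \lambda_l A_l\le \Gamma_{d,k}(p)$. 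This is where Lemma \ref{180222lemma10.3} ($\sum_l \lambda_l/l=1$) combines with the pointwise bound $lA_l\le \Gamma_{d,k}(p)$ (Lemma \ref{180222lemma10.4}) to conclude. Your proposal is missing precisely this monotonicity/positivity step, without which the level-by-level comparison does not close.
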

These two lemmas will be proven in forthcoming sections.\\

Choose now $r_1, r_6$ and $W$ large enough, and then $\mu$  small enough.
By combining \eqref{abc50}, \eqref{abc42} and \eqref{abc41} we obtain that for these values of $p,r_1, r_6,W$ and $\mu$, the expression appearing in \eqref{abc38} is negative.
Going back to \eqref{0712e1.65}, for these values of $p,\mu,r_1, r_6$ and $ W$, we conclude that
\beq\label{0712f1.70}
\tilde{\eta}_{p, \mu, u,r_1, r_6,W}<\eta_p.
\endeq
For $K$ large enough, Theorem \ref{0723theorem4.6h} implies that
\beq\label{0712f1.71}
V^{(d, k)}(\delta, p)\lesim_{K, p, \epsilon}\delta^{-\epsilon} \max_{\delta\le \delta'\le 1}(\frac{\delta'}{\delta})^{\Gamma_{d-1, k}(p)+\epsilon} V^{(d, k)}(\delta', p, \nu_K).
\endeq
We have two possibilities. First, if
\beq
\tilde{\eta}_{p, \mu, u,r_1, r_6, W}< \Gamma_{d-1, k}(p)+\epsilon,
\endeq
then \eqref{0712f1.71} combined with \eqref{abc43} forces
\beq
V^{(d, k)}(\delta, p)\lesssim_\epsilon(\frac{1}{\delta})^{\epsilon+\Gamma_{d-1, k}(p)}.
\endeq
This contradicts \eqref{abc50}.

Second, if
\beq
\tilde{\eta}_{p, \mu, u,r_1, r_6, W}\ge \Gamma_{d-1, k}(p)+\epsilon,
\endeq
then again \eqref{0712f1.71} combined with \eqref{abc43} forces
\beq
V^{(d, k)}(p, \delta)\lesssim_\epsilon (\frac{1}{\delta})^{\epsilon+\tilde{\eta}_{p, \mu, u,r_1, r_6, W}}.
\endeq
This contradicts \eqref{0712f1.70}. Since both cases lead to a contradiction, it can only be that our original assumption \eqref{abc50} is false. This  finishes the proof of \eqref{0712f1.66}.\\

\section{\bf Proof of Lemma \ref{180218lem7.3} and Lemma \ref{180205lemma6.4}}
In this section we will prove Lemma \ref{180218lem7.3} and Lemma \ref{180205lemma6.4} simultaneously.  Define a $(2k-3)\times (2k-3)$ matrix $\mc{M}=(m_{i,j})$ by
\beq\label{matrix}
\begin{split}
& m_{2i+1, 2i}=\frac{i+2}{i+1}(1-\beta_{i+1}), \,\,\, m_{2i+1, 2i-1}=\frac{i+2}{i+1}\beta_{i}, \text{ with } 2\le i\le k-2;\\
& m_{2i, 2i+1}=1-\alpha_{i+1}, \,\,\, m_{2i, 2i+2}=\alpha_{i+1}, \text{ with }1\le i\le k-2;\\
& m_{1, 2}=\alpha_1, \,\,\, m_{3, 2}=\frac{3}{2}\beta_2, \,\,\, \text{ and } m_{i, j}=0 \text{ elsewhere}.
\end{split}
\endeq
The linear system of equations \eqref{iteration-formula} becomes
\beq
\begin{split}
& (w_1, w_2, \eta_2,  \dots, w_{k-1}, \eta_{k-1})^T\\
& =\mc{M}\cdot (w_1, w_2, \eta_2, \dots, w_{k-1}, \eta_{k-1})^T+ \text{some non-homogeneous term.}
\end{split}
\endeq
Here $\eta_1=w_k=0$ can be incorporated into the non-homogeneous term. Hence Lemma \ref{180218lem7.3} will follow from
\begin{lem}\label{180218lem8.1}
For every $d\ge 1$ and  $k\ge 2$, all eigenvalues of $\mc{M}$ have moduli strictly smaller than one.
\end{lem}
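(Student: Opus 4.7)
The matrix $\mc{M}$ is entrywise non-negative (each $\alpha_l, \beta_l$ is a convex-combination weight in $(0,1)$, and the factors $\frac{l+1}{l}$ are positive), and its first column is identically zero because the quantity $w_1$ is not fed back into any later step of the iteration \eqref{iteration-formula}. Consequently $0$ is a trivial eigenvalue with eigenvector $e_1$, and the nontrivial spectrum coincides with that of the $(2k-4)\times(2k-4)$ principal submatrix $\mc{M}'$ indexed by $\{w_l,\eta_l\}_{l=2}^{k-1}$. I would first check that $\mc{M}'$ is irreducible by inspecting the directed graph of its positive entries: composing forward $w_l\to w_{l+1}$ transitions with a crossing $w_l\to\eta_l$ and backward $\eta_l\to\eta_{l-1}$ transitions one connects any two states. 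By Perron--Frobenius the spectral radius $\rho(\mc{M}')=\rho(\mc{M})$ is therefore realised by a strictly positive eigenvector, and by the Collatz--Wielandt characterization the lemma reduces to producing a single $v>0$ with $\mc{M}'v<v$ componentwise.

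To construct $v$, I would write out the two row-inequalities at each level $l$:
\[
(1-\alpha_l)\,v_{\eta_l}+\alpha_l\,v_{w_{l+1}}<v_{w_l}, \qquad
\tfrac{l+1}{l}\bigl[(1-\beta_l)\,v_{\eta_{l-1}}+\beta_l\,v_{w_l}\bigr]<v_{\eta_l}\ (l\ge 3),
\]
together with the boundary constraints $\frac{3}{2}\beta_2 v_{w_2}<v_{\eta_2}$ at $l=2$ and $(1-\alpha_{k-1})v_{\eta_{k-1}}<v_{w_{k-1}}$ at $l=k-1$ (where the slack comes from $w_k=0$). Eliminating $v_{\eta_l}$ between the two displayed inequalities yields the telescoping recursion
\[
v_{w_l}\bigl[1-\tfrac{l+1}{l}\beta_l(1-\alpha_l)\bigr]>(1-\alpha_l)\tfrac{l+1}{l}(1-\beta_l)\,v_{\eta_{l-1}}+\alpha_l\,v_{w_{l+1}},
\]
which can be solved forward in $l$ for a positive sequence $(v_{w_l},v_{\eta_l})$ provided the two-cycle condition
\[
\tfrac{l+1}{l}\beta_l(1-\alpha_l)<1,\qquad 2\le l\le k-1,
\]
is satisfied with a uniform gap. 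The boundary slack at $l=k-1$ then terminates the forward propagation cleanly.

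The main obstacle is the verification of the two-cycle condition. Substituting the closed forms $\alpha_l=(A_l-B_l)/(A_{l+1}-B_l)$ and $\beta_l=(B_l-B_{l-1})/(A_l-B_{l-1})$, with $A_l:=n_d(k)/n_d(l)$ and $B_l:=p_d(k)/p_d(l)$, the condition becomes the algebraic inequality
\[
(l+1)(A_{l+1}-A_l)(B_l-B_{l-1})<l(A_{l+1}-B_l)(A_l-B_{l-1})
\]
for the two sequences $(A_l),(B_l)$. This step is naturally coupled with the proof of Lemma~\ref{180205lemma6.4}: the conservation identity $\sum_j b_j\gamma_j=1$ to be established there is the exact telescoping of the same quantities that appear in the two-cycle bound, and equality in the two-cycle bound at some $l$ would correspond to a borderline scaling at which the iteration in Section~\ref{section-large-p} becomes critical and \eqref{abc42} would fail. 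Hence the two lemmas can (and should) be proved simultaneously, with the spectral gap $1-\rho(\mc{M})>0$ and the closing of the series $\sum b_j\gamma_j$ being two sides of the same algebraic coin.
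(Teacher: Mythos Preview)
Your Perron--Frobenius framework is sound and, in fact, is morally what the paper does: it too constructs a strictly positive vector $\vec w\,'$ with $\mc{M}\vec w\,'\le \vec w\,'$ (equality except in two coordinates) and deduces that $\mc{M}^r\to 0$ entrywise. The disagreement is in how that positive subinvariant vector is produced.

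The gap in your plan is the claim that the two-cycle bound $\frac{l+1}{l}\beta_l(1-\alpha_l)<1$, even with a uniform margin, suffices to run a forward construction of $(v_{w_l},v_{\eta_l})$. It does not. Eliminating $v_{\eta_l}$ gives, as you wrote, a lower bound on $v_{w_l}$ in terms of $v_{\eta_{l-1}}$ and $v_{w_{l+1}}$; but $v_{\eta_{l-1}}$ is itself bounded below by a quantity involving $v_{w_{l-1}}$ and $v_{\eta_{l-2}}$, while the row for $w_{l-1}$ forces $v_{w_l}$ to be \emph{small}. Tracing this through even one extra level, the constraint you must meet at level $l$ becomes (roughly)
\[
\frac{l+1}{l}(1-\alpha_l)(1-\beta_l)\,v_{\eta_{l-1}}
<\bigl(1-\tfrac{l+1}{l}\beta_l(1-\alpha_l)\bigr)\,v_{w_l}
\quad\text{with}\quad
v_{w_l}<\frac{1-\tfrac{l}{l-1}\beta_{l-1}(1-\alpha_{l-1})}{\alpha_{l-1}}\,v_{w_{l-1}},
\]
which is a genuinely new inequality involving a longer cycle in the directed graph of $\mc{M}'$ (e.g.\ $w_{l-1}\to w_l\to\eta_l\to\eta_{l-1}\to w_{l-1}$). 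Controlling all such cycles is exactly as hard as bounding $\rho(\mc{M}')$ directly; the two-cycle condition only handles the shortest ones. Your final paragraph gestures at a link with \eqref{abc42} but does not supply the missing estimates.

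What the paper does instead is bypass any inductive construction: it writes down a \emph{closed-form} candidate for the positive vector, namely the unique solution of the inhomogeneous system
\[
\vec w\,'=\mc{M}\vec w\,'+\bigl((1-\alpha_1)\eta_1',\,0,\,\tfrac{3}{2}(1-\beta_2)\eta_1',\,0,\dots,0\bigr)^{\!T},
\qquad \eta_1'=2,\ w_k'=0,
\]
given explicitly (Lemma~\ref{180223lem9.1}) by
\[
w_j'=1+\frac{p_1 n_k\bigl(s_{j-1}-(j-1)n_j\bigr)}{n_j(n_1 p_k-p_1 n_k)},
\qquad
\eta_j'=\frac{j+1}{j}\cdot\frac{p_j(n_1 p_k-p_1 n_k)+p_1\bigl(s_{j-1}p_k-(j-1)p_j n_k\bigr)}{p_j(n_1 p_k-p_1 n_k)} .
\]
The positivity of every entry is then a direct computation resting on the algebraic identity $p_j n_1+p_1 s_{j-1}=jp_1 n_j$ (Claim~\ref{180220claim9.2}). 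Since the inhomogeneous term is nonnegative with at least one positive coordinate, iterating $\vec w\,'=\mc{M}^r\vec w\,'+(\mc{M}^{r-1}+\cdots+\mc{I})\vec w_0\,'$ forces $\mc{M}^r\to 0$ and hence $\rho(\mc{M})<1$. The same explicit formula gives $w_1'=1$, which is precisely Lemma~\ref{180205lemma6.4}; so your intuition that the two lemmas are ``two sides of the same coin'' is correct, but the coin is the closed-form solution of Lemma~\ref{180223lem9.1}, not a chain of cycle inequalities.
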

\begin{proof}[Proof of Lemma \ref{180218lem8.1}.]
This lemma is proven via studying the quantity $\sum_{j=0}^{\infty} b_j \gamma_j$, which is the main object of study of Lemma \ref{180205lemma6.4}. Similar to how $\lambda_0$ can be calculated by the formula \eqref{iteration-formula}, we also have an iterative formula for $\sum_{j=0}^{\infty} b_j \gamma_j$.\\

We turn to Figure \ref{figure-tree}. Assign $w'_1$ to the root node $\frac{p\cdot n_d(1)}{n_d(k)}$ and let it collect the contributions from all terms that come after it and contain $b_j$ and $\gamma_j$. Hence
\beq
w'_1=\sum_{j=0}^{\infty} b_j \gamma_j.
\endeq
Moreover, for each $2\le l\le k$, assign $w'_l$ to the node $\frac{p\cdot n_d(l)}{n_d(k)}$ and let it collect the contributions from all terms that come after it and contain $b_j$ and $\gamma_j$. Similarly, for each $2\le l\le k-1$, assign $\eta'_l$ to the node $q_{d, k}(l)$.\\

From \eqref{0105e2.44} to \eqref{1120e6.18}, the root node bifurcates into two nodes. Hence
\beq\label{180220e9.2}
w'_1=\alpha_1 w'_2+(1-\alpha_1) \eta'_1.
\endeq
Similarly we obtain
\beq\label{180220e9.3}
w'_l= \alpha_l w'_{l+1}+ (1-\alpha_l) \eta'_l \text{ for each } 2\le l\le k-1.
\endeq
Next we derive relations for $\eta'_l$. They satisfy
\beq\label{180220e9.4}
\eta'_l=\frac{l+1}{l} \eta'_{l-1}(1-\beta_l)+ \frac{l+1}{l} \beta_l w'_l \text{ for each } 2\le l\le k-1.
\endeq
We also observe that
\beq\label{180220e9.5}
\eta'_1=2 \text{ and } w'_k=0.
\endeq
Using the matrix $\mc{M}$ given by \eqref{matrix}, we obtain
\beq
\begin{split}
& (w'_1, w'_2, \eta'_2, \dots, w'_{k-1}, \eta'_{k-1})^T\\
& =\mc{M}\cdot (w'_1, w'_2, \eta'_2, \dots, w'_{k-1}, \eta'_{k-1})^T+ ((1-\alpha_1)\eta'_1, 0, \frac{3}{2}(1-\beta_2)\eta'_1, 0, \cdots, 0)^T.
\end{split}
\endeq
In the following, to simplify notation, we will abbreviate
\beq\
p_d(j) \text{ to } p_j \text{ and } n_d(j) \text{ to } n_j, \text { for } 1\le j\le k,
\endeq
as dimension $d$ is always fixed. Moreover, define $s_0=0$ and
\beq
s_j:=n_1+n_2+\dots+ n_j \text{ with } 1\le j\le k.
\endeq
We will prove
\begin{lem}\label{180223lem9.1}
Under the above notation,
\begin{align}
\label{180302e8.11}& w'_j=1+\frac{p_1 n_k (s_{j-1}-(j-1)n_j)}{n_j (n_1 p_k-p_1 n_k)} \text{ for } 1\le j\le k,\\
\label{180302e8.12}& \eta'_j=\frac{j+1}{j} \frac{p_j (n_1 p_k-p_1 n_k)+p_1(s_{j-1} p_k-(j-1)p_j n_k)}{p_j (n_1 p_k-p_1 n_k)} \text{ for } 1\le j\le k-1,
\end{align}
satisfy equations \eqref{180220e9.2}--\eqref{180220e9.5}.
\end{lem}

Here we remark that without Lemma \ref{180218lem8.1}, we are not able to conclude Lemma \ref{180205lemma6.4} directly from Lemma \ref{180223lem9.1} directly. This is because we do not know the uniqueness of solutions to the system \eqref{180220e9.2}--\eqref{180220e9.5}. Moreover,
\beq
w'_1=w'_2=\eta'_2=\dots=w'_{k-1}=\eta'_{k-1}=\infty
\endeq
also satisfies \eqref{180220e9.2}--\eqref{180220e9.5}, which should also be ruled out before concluding Lemma \ref{180205lemma6.4} from Lemma \ref{180223lem9.1}. \\

The proof of Lemma \ref{180223lem9.1} is postponed to the end of this section. We first finish the proof of Lemma \ref{180218lem8.1}. Denote
\beq
\overrightarrow{w}'=(w'_1, w'_2, \eta'_2, \dots, w'_{k-1}, \eta'_{k-1})^T, \text{ with } w'_j \text{ given by } \eqref{180302e8.11}  \text{ and } \eta'_j \text{ by }\eqref{180302e8.12}.
\endeq
Moreover, denote
\beq
\overrightarrow{w_0}'=((1-\alpha_1)\eta'_1, 0, \frac{3}{2}(1-\beta_2)\eta'_1, 0, \cdots, 0)^T.
\endeq
Hence Lemma \ref{180223lem9.1} says that
\beq\label{180302e8.16a}
\overrightarrow{w}'=\mc{M} \overrightarrow{w}'+\overrightarrow{w_0}'.
\endeq
Next we claim that the vector $\overrightarrow{w}'$ is positive entry-wise. To prove that $w'_j>0$ for every $1\le j\le k-1$, it is equivalent to prove
\beq
n_j(n_1 p_k-p_1 n_k)> p_1 n_k (-s_{j-1}+(j-1)n_j).
\endeq
This is further equivalent to
\beq\label{180302e8.18a}
n_j n_1 p_k+p_1 n_k s_{j-1}> j n_j p_1 n_k.
\endeq
We prove
\begin{claim}\label{180220claim9.2}
We have the identity
\beq\label{180220e9.19}
p_j n_1+ p_1 s_{j-1}=j p_1 n_j,
\endeq
for every $1\le j\le k$.
\end{claim}
By applying Claim \ref{180220claim9.2} to \eqref{180302e8.18a}, we see that it is equivalent to show
\beq
n_j p_k>p_j n_k,
\endeq
which follows via a direct calculation. \\

\begin{proof}[Proof of Claim \ref{180220claim9.2}.]
We prove this claim via an induction on $j$. The case $j=0$ follow immediately by recalling that $s_0=0$. Now assume we have proven \eqref{180220e9.19} for $j$. We need to verify
\beq
p_{j+1} n_1+p_1 s_{j}-(j+1) p_1 n_{j+1}=0.
\endeq
Writing $s_j=s_{j-1}+n_j$, and applying \eqref{180220e9.19}, we obtain the following equivalent form
\beq\label{180220e9.21}
(j+1) p_1 (n_{j+1}-n_j)=n_1(p_{j+1}-p_j).
\endeq
However \eqref{180220e9.21} follows via a straightforward calculation.
\end{proof}

So far we have proven that $w'_j>0$ for every $1\le j\le k-1$. The proof that $\eta'_j>0$ for every $2\le j\le k-1$ is very similar, hence we leave it out. This finishes the proof that the vector $\overrightarrow{w}'$ is positive entry-wise. \\

After all these preparations, we are ready to prove that all eigenvalues of $\mc{M}$ have moduli strictly smaller than one. This is the same as saying that every entry of $\mc{M}^r$ will tend to zero as $r\to +\infty$. To prove this, we iterate \eqref{180302e8.16a} for $r$ many times, and obtain
\beq\label{180303e8.23}
\overrightarrow{w}'=\mc{M}^r \overrightarrow{w}'+(\mc{M}^{r-1}+\dots+\mc{I}_{2k-3})\overrightarrow{w_0}'.
\endeq
Here $\mc{I}_{2k-3}$ denotes the identity matrix of rank $2k-3$. Moreover, it is not difficult to see that when $r\ge 2k$, the vector
\beq
(\mc{M}^{r-1}+\dots+\mc{I}_{2k-3})\overrightarrow{w_0}'
\endeq
also becomes positive entry-wise. By iterating \eqref{180303e8.23} with $r=2k$, we obtain that
\beq
\lim_{m\to \infty} \mc{M}^{2km}=0 \text{ entry-wise}.
\endeq
Similarly, by iterating \eqref{180303e8.23} with $r=2k$ and $r=2k+j$ for $1\le j<2k$, we obtain that
\beq
\lim_{m\to \infty} \mc{M}^{2km+j}=0 \text{ entry-wise}.
\endeq
A very similar argument first appeared in Bourgain, Demeter and Guth \cite{BDG}, in a sightly different context. See page 680 there.  This finishes the proof of Lemma \ref{180218lem8.1}.
\end{proof}

\begin{proof}[Proof of Lemma \ref{180205lemma6.4}.]
Lemma \ref{180218lem8.1} implies that the solution to the system  \eqref{180220e9.2}--\eqref{180220e9.5} must be unique. Hence \eqref{180302e8.11} and \eqref{180302e8.12} must be the unique solution. In the end, we just need to notice that $w'_1=1$. This finishes the proof of Lemma \ref{180205lemma6.4}.
\end{proof}

\begin{proof}[Proof of Lemma \ref{180223lem9.1}.]
That $\eta'_1=2$ is straightforward to check. Next we check \eqref{180220e9.2} and $\eqref{180220e9.3}$. Basic calculation yields
\begin{align}\label{basic-cal}
& \alpha_j=\frac{n_{j+1}}{n_j} \frac{n_j p_k-p_j n_k}{n_{j+1} p_k-p_j n_k} \text{ and } 1-\alpha_j=\frac{(n_{j+1}-n_j)p_j n_k}{n_j(n_{j+1}p_k-p_j n_k)};\\
& \beta_j=\frac{p_k}{p_j} \frac{n_j (p_j-p_{j-1})}{n_j p_k-p_{j-1} n_k} \text{ and } 1-\beta_j=\frac{p_{j-1}}{p_j}\frac{p_k n_j-p_j n_k}{p_k n_j-p_{j-1}n_k}.
\end{align}
Hence what we need to check is equivalent to
\beq
\begin{split}
& \frac{n_{j+1}}{n_j} \frac{n_j p_k-p_j n_k}{n_{j+1} p_k-p_j n_k} \frac{n_{j+1} (n_1 p_k-p_1 n_k)+p_1 n_k (s_{j}-jn_{j+1})}{n_{j+1} (n_1 p_k-p_1 n_k)}\\
&+ \frac{(n_{j+1}-n_j)p_j n_k}{n_j(n_{j+1}p_k-p_j n_k)} \frac{j+1}{j} \frac{p_j (n_1 p_k-p_1 n_k)+p_1(s_{j-1} p_k-(j-1)p_j n_k)}{p_j (n_1 p_k-p_1 n_k)}\\
& =\frac{n_j (n_1 p_k-p_1 n_k)+ p_1 n_k (s_{j-1}-(j-1)n_j)}{n_j (n_1 p_k-p_1 n_k)}
\end{split}
\endeq
By cancelling same terms on numerators and denominators or on both sides, the above display can be simplified to
\beq\label{180220e9.15}
\begin{split}
& \frac{(n_j p_k-p_j n_k)[n_{j+1} (n_1 p_k-p_1 n_k)+p_1 n_k (s_{j}-jn_{j+1})]}{n_{j+1} p_k-p_j n_k}\\
&+ \frac{(n_{j+1}-n_j)n_k [p_j (n_1 p_k-p_1 n_k)+p_1(s_{j-1} p_k-(j-1)p_j n_k)]}{(n_{j+1}p_k-p_j n_k)} \frac{j+1}{j}\\
& =n_j (n_1 p_k-p_1 n_k)+ p_1 n_k (s_{j-1}-(j-1)n_j)
\end{split}
\endeq
Write the first term from \eqref{180220e9.15} as
\beq
\left(1+\frac{n_j p_k-n_{j+1}p_k}{n_{j+1}p_k-p_j n_k} \right) [n_{j+1} (n_1 p_k-p_1 n_k)+p_1 n_k (s_{j}-jn_{j+1})]
\endeq
and subtract from it the right hand side of \eqref{180220e9.15}. We observe that there is factor $(n_{j+1}-n_j)$ coming out. Cancelling this factor on both sides of the equation, we obtain an equivalent form
\beq
\begin{split}
& (n_1 p_k-(j+1)p_1 n_k)(n_{j+1}p_k-p_j n_k)-p_k[n_{j+1}(n_1 p_k-p_1 n_k)+ p_1 n_k (s_j-j n_{j+1})]\\
& + \frac{n_k (j+1)}{j}[p_j (n_1 p_k-p_1 n_k)+ p_1(s_{j-1} p_k-(j-1)p_j n_k)]=0.
\end{split}
\endeq
Expanding all brackets, we obtain
\beq
p_j p_k n_1 n_k+p_1 p_k s_{j-1} n_k=j p_1 p_k n_j n_k.
\endeq
However this is an immediate consequence of Claim \ref{180220claim9.2}.\\

Also from Claim \ref{180220claim9.2} it follows immediately that $w'_k=0$. Hence it remains to check \eqref{180220e9.4}. The proof is slightly more tricky as we need to apply Claim \ref{180220claim9.2} much earlier. First let us write down what we need to check:
\beq
\begin{split}
& \frac{j+1}{j} \frac{p_{j-1}}{p_j}\frac{p_k n_j-p_j n_k}{p_k n_j-p_{j-1}n_k} \frac{j}{j-1} \frac{p_{j-1} (n_1 p_k-p_1 n_k)+p_1(s_{j-2} p_k-(j-2)p_{j-1} n_k)}{p_{j-1} (n_1 p_k-p_1 n_k)}\\
& + \frac{j+1}{j} \frac{p_k}{p_j} \frac{n_j (p_j-p_{j-1})}{n_j p_k-p_{j-1} n_k} \frac{n_j (n_1 p_k-p_1 n_k)+p_1 n_k (s_{j-1}-(j-1)n_j)}{n_j (n_1 p_k-p_1 n_k)}\\
& = \frac{j+1}{j} \frac{p_j (n_1 p_k-p_1 n_k)+p_1(s_{j-1} p_k-(j-1)p_j n_k)}{p_j (n_1 p_k-p_1 n_k)}.
\end{split}
\endeq
We cancel same terms on numerators and denominators, and obtain
\begin{align}
&\label{180220e9.23} \frac{p_k n_j-p_j n_k}{p_k n_j-p_{j-1}n_k} \frac{j}{j-1} [p_{j-1} (n_1 p_k-p_1 n_k)+p_1(s_{j-2} p_k-(j-2)p_{j-1} n_k)]\\
&\label{180220e9.23-2} + \frac{p_k (p_j-p_{j-1})}{n_j p_k-p_{j-1} n_k} [n_j (n_1 p_k-p_1 n_k)+p_1 n_k (s_{j-1}-(j-1)n_j)]\\
& \label{180220e9.23-3}= p_j (n_1 p_k-p_1 n_k)+p_1(s_{j-1} p_k-(j-1)p_j n_k).
\end{align}
Our goal is to find out that $(p_j-p_{j-1})$ factors the difference between both sides of the equality in the last expression. In this step we need Claim \ref{180220claim9.2}. Taking the difference between \eqref{180220e9.23} and \eqref{180220e9.23-3}, we obtain
\beq
\begin{split}
& \frac{(p_{j-1}-p_j) n_k}{p_k n_j-p_{j-1}n_k}\frac{j}{j-1} [p_{j-1} (n_1 p_k-p_1 n_k)+p_1(s_{j-2} p_k-(j-2)p_{j-1} n_k)]\\
& + \frac{j}{j-1} [p_{j-1} (n_1 p_k-p_1 n_k)+p_1(s_{j-2} p_k-(j-2)p_{j-1} n_k)] \\
&-[p_j (n_1 p_k-p_1 n_k)+p_1(s_{j-1} p_k-(j-1)p_j n_k)].
\end{split}
\endeq
This is further equal to
\beq\label{180220e9.25}
\begin{split}
& \frac{(p_{j-1}-p_j) n_k}{p_k n_j-p_{j-1}n_k}\frac{j}{j-1} [p_{j-1} (n_1 p_k-p_1 n_k)+p_1(s_{j-2} p_k-(j-2)p_{j-1} n_k)]\\
&+ (j-1) p_1 n_k (p_j-p_{j-1})-(n_1p_k-p_1 n_k)(p_{j}-p_{j-1}).
\end{split}
\endeq
In this step we applied Claim \ref{180220claim9.2}. What we need to check becomes
\beq
\eqref{180220e9.25}+ \eqref{180220e9.23-2}=0.
\endeq
Multiply both side by $(j-1) (p_k n_j-p_{j-1} n_k)$ and expand all brackets. In the end, everything is reduced to
\beq
(j-1) n_k n_{j-1} p_1 p_k-n_k s_{j-2} p_1 p_k-n_1 n_k p_{j-1}p_k=0,
\endeq
which immediately follows from Claim \ref{180220claim9.2}.
\end{proof}

\section{\bf Proof of Lemma \ref{180205lemma7.5}}\label{180328section7.7}

In the proof of this lemma, let us first write down a system of equations that $\sum_{j=0}^{\infty} d_j\tau_j$ satisfies.

In Figure \ref{figure-tree}, for each $1\le l\le k$, assign $w''_l$ to the node $\frac{p\cdot n_d(l)}{n_d(k)}$ and let it collect the contributions from all terms that come after it and contain $d_j$ and $\tau_j$. Similarly, for each $2\le l\le k-1$, we define $\eta''_l$ and assign it to the node $q_{d, k}(l)$. We obtain
\beq\label{iteration-3}
\begin{split}
& w''_l= \alpha_l w''_{l+1}+ (1-\alpha_l) \eta''_l \text{ for each } 1\le l\le k-1;\\
&  \eta''_l=\frac{l+1}{l} \eta''_{l-1}(1-\beta_l)+ \frac{l+1}{l} \beta_l w''_l \text{ for each } 2\le l\le k-1;\\
& \eta''_1=0 \text{ and } w''_k=1.
\end{split}
\endeq
Recall
\beq\label{180222e10.2}
\Gamma_{d, k}(p)=\max\{(\frac{1}{2}-\frac{1}{p})d, \max_{1\le j\le d}\{(1-\frac{1}{p}) j-\frac{\mc{K}_{j, k}}{p}\}\}.
\endeq
Lemma \ref{180205lemma7.5} amounts to proving
\beq\label{180221e10.3}
\lambda_0-(\sum_{j=0}^{\infty} d_j\tau_j) \Gamma_{d, k}(p)\le 0.
\endeq
Recall that $\lambda_0$ is given by \eqref{180206e7.37} and \eqref{iteration-formula}. Before proving \eqref{180221e10.3}, let us write down a second linear system of equations that also produces $\lambda_0$.
\begin{lem}\label{180222lem10.1}
Define
\beq
A_l=\frac{\Gamma_{d, l}(q_{d, k}(l))}{l}+\frac{d}{l}(\frac{1}{q_{d, k}(l)}-\frac{1}{p}).
\endeq
Consider the linear system of equations
\beq\label{iteration-formula-aa}
\begin{split}
& \bar{w}_l= (1-\alpha_l) (A_l+\bar{\eta}_l )+\alpha_{l} \bar{w}_{l+1}: 1\le l\le k-1;\\
& \bar{\eta}_l =\frac{l+1}{l}(\bar{\eta}_{l-1}+A_{l-1})(1-\beta_l)+\frac{l+1}{l} \beta_l \bar{w}_l: 2\le l\le k-1;\\
& \bar{\eta}_1= 0; \bar{w}_{k}=0.
\end{split}
\endeq
This system admits a unique solution $(\bar{w}_1, \bar{w}_2, \bar{\eta}_2, \dots, \bar{w}_{k-1}, \bar{\eta}_{k-1})$. Moreover,
\beq
\lambda_0=\bar{w}_1.
\endeq
\end{lem}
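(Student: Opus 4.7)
My plan for proving Lemma \ref{180222lem10.1} has two parts: existence and uniqueness of the solution to \eqref{iteration-formula-aa}, followed by the identification $\lambda_0 = \bar{w}_1$. The crucial observation for the first part is that the coefficients attached to the unknowns $\bar{w}_l, \bar{\eta}_l$ on both sides of \eqref{iteration-formula-aa} are identical to those attached to $w_l, \eta_l$ in the original system \eqref{iteration-formula}. Consequently, \eqref{iteration-formula-aa} rewrites in matrix form as $(I - \mathcal{M})\,(\bar{w}_1,\bar{w}_2,\bar{\eta}_2,\dots,\bar{w}_{k-1},\bar{\eta}_{k-1})^T = \vec{c}$, where $\mathcal{M}$ is the matrix \eqref{matrix} and $\vec{c}$ is built from $A_1,\ldots,A_{k-1}$. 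Since Lemma \ref{180218lem8.1} guarantees that the spectral radius of $\mathcal{M}$ is strictly less than $1$, the matrix $I - \mathcal{M}$ is invertible, which yields existence and uniqueness at once.

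For the identification $\lambda_0 = \bar{w}_1$, I plan to exploit this shared coefficient matrix. Setting $Y_l := \bar{w}_l - w_l$ and $X_l := \bar{\eta}_l - \eta_l$ and subtracting \eqref{iteration-formula} from \eqref{iteration-formula-aa}, the $\Gamma$-contributions cancel out and the differences $(Y, X)$ satisfy the same linear recursion as $(w, \eta)$, but with a much simpler inhomogeneous term depending only on the gaps $\tfrac{1}{q_{d,k}(l)} - \tfrac{1}{p}$ and $\tfrac{1}{q_{d,k}(l)} - \tfrac{p_d(k)}{p_d(l)\,p}$. Since relation \eqref{180206e7.37} reads $\lambda_0 = w_1 + d\bigl(\tfrac{1}{q_{d,k}(1)} - \tfrac{n_d(k)}{n_d(1)\,p}\bigr)$, proving $\lambda_0 = \bar{w}_1$ amounts to the single identity
\[
Y_1 = d\Bigl(\tfrac{1}{q_{d,k}(1)} - \tfrac{n_d(k)}{n_d(1)\,p}\Bigr).
\]

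To establish this identity, my plan is to guess an explicit ansatz $Y_l = d\,f_l$ and $X_l = d\,g_l$, where $f_l, g_l$ are rational functions of $n_d(j), p_d(j), s_j$ and $\tfrac{1}{p}$, in the spirit of the formulas \eqref{180302e8.11}--\eqref{180302e8.12} from Lemma \ref{180223lem9.1}. Substituting the ansatz into the difference recursion and using the explicit forms of $\alpha_l, 1-\alpha_l, \beta_l, 1-\beta_l$ collected in \eqref{basic-cal}, the verification reduces to polynomial identities among $n_d(j), p_d(j), s_j$, which should follow from Claim \ref{180220claim9.2}; uniqueness from the first step then guarantees that the ansatz is the true solution. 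As a sanity check, in the case $k = 2$ (which admits no $\eta$-variables) the difference recursion collapses to $Y_1 = d(1-\alpha_1)\bigl[\tfrac{2}{q_{d,2}(1)} - \tfrac{p_d(1) + p_d(2)}{p\,p_d(1)}\bigr]$, and the desired identity reduces exactly to $2\,p_d(1)\,n_d(2) = n_d(1)\bigl(p_d(1) + p_d(2)\bigr)$, i.e., Claim \ref{180220claim9.2} with $j = 2$. The main obstacle will be pinning down the correct ansatz for $f_l, g_l$; the structural hint from Lemma \ref{180223lem9.1} is that the solution will be a rational expression whose denominator contains the factor $n_d(1) p_d(k) - p_d(1) n_d(k)$, after which the verification becomes a (lengthy but mechanical) linear-algebraic exercise.
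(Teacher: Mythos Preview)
Your overall strategy is correct and matches the paper's: uniqueness comes from Lemma \ref{180218lem8.1} (spectral radius of $\mathcal{M}$ below $1$ makes $I-\mathcal{M}$ invertible), and the identification $\lambda_0=\bar w_1$ is reduced, by subtracting \eqref{iteration-formula} from \eqref{iteration-formula-aa}, to showing that the difference system has $Y_1=d\bigl(\tfrac{1}{q_{d,k}(1)}-\tfrac{n_d(k)}{n_d(1)p}\bigr)$. Your observation that the $\Gamma$-terms cancel in the subtraction is exactly right; the resulting difference system is the paper's \eqref{iteration-formula-ff}.

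Where you diverge is in how to solve that difference system. You propose to guess a single closed-form ansatz $Y_l=d\,f_l$, $X_l=d\,g_l$ and verify it directly via Claim \ref{180220claim9.2}. The paper instead splits \eqref{iteration-formula-ff} into a sum of three simpler linear systems (labelled \eqref{iteration-formula-kk}, \eqref{iteration-formula-gg}, \eqref{iteration-formula-hh}), each with a cleaner inhomogeneous term, and handles them separately: two of the three are dispatched essentially for free by homogeneity with the already-computed solutions in Lemma \ref{180223lem9.1} and Lemma \ref{180222lemma10.3}, and only the third requires writing down and verifying an explicit closed form. The advantage of the paper's decomposition is that it recycles work already done; the advantage of your single-ansatz route is conceptual directness, at the cost of having to discover and check a more complicated combined formula (effectively the sum of the paper's three pieces). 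Both routes are valid, and both ultimately rest on the same algebraic identity from Claim \ref{180220claim9.2}.
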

The proof of this lemma is postponed to the end of this section. \\

To prove \eqref{180221e10.3}, we will use the system \eqref{iteration-formula-aa} instead of \eqref{iteration-formula}. The relation \eqref{180221e10.3} becomes
\beq
\bar{w}_1-\Gamma_{d, k}(p)\cdot w''_1\le 0.
\endeq
Define
\beq
\widetilde{w_l}:=\bar{w}_l-\Gamma_{d, k}(p)\cdot w''_l \text{ with } 1\le l\le k-1,
\endeq
and
\beq
\widetilde{\eta_l}:=\bar{\eta}_l-\Gamma_{d, k}(p)\cdot \eta''_l \text{ with } 2\le l\le k-1.
\endeq
By \eqref{iteration-formula-aa} and \eqref{iteration-3}, we obtain the following system of equations:
\beq\label{iteration-formula-bb}
\begin{split}
& \widetilde{w_l}= (1-\alpha_l) (A_l+\widetilde{\eta_l} )+\alpha_{l} \widetilde{w_{l+1}}: 1\le l\le k-1;\\
& \widetilde{\eta_l} =\frac{l+1}{l}(\widetilde{\eta_{l-1}}+A_{l-1})(1-\beta_l)+\frac{l+1}{l} \beta_l \widetilde{w_l}: 2\le l\le k-1;\\
& \widetilde{\eta_1}= 0; \widetilde{w_{k}}=-\Gamma_{d, k}(p).
\end{split}
\endeq
Our goal now is to prove that
\beq\label{180221e10.12}
\widetilde{w_1}\le 0.
\endeq
Consider a variant of the system \eqref{iteration-formula-bb}
\beq\label{iteration-formula-cc}
\begin{split}
& \widetilde{w_l}= (1-\alpha_l) (A_l+\widetilde{\eta_l} )+\alpha_{l} \widetilde{w_{l+1}}: 1\le l\le k-1;\\
& \widetilde{\eta_l} =\frac{l+1}{l}(\widetilde{\eta_{l-1}}+A_{l-1})(1-\beta_l)+\frac{l+1}{l} \beta_l \widetilde{w_l}: 2\le l\le k-1;\\
& \widetilde{\eta_1}= 0; \widetilde{w_{k}}=\theta.
\end{split}
\endeq
Here we treat $\widetilde{w_k}$ as a free parameter $\theta$.
\begin{claim}\label{180222claim10.2}
Let $\widetilde{w_1}(\theta)$ be the unique solution of \eqref{iteration-formula-cc}. There exists $\Delta_1>0$ and $\Delta_2\in \R$ such that
\beq\label{180222e10.14}
\widetilde{w_1}(\theta)=\Delta_1 \theta+\Delta_2.
\endeq
That is, $\widetilde{w_1}(\theta)$ is strictly monotone increasing with respect to $\theta$.
\end{claim}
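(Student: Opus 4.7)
The plan is to use the two structural features of \eqref{iteration-formula-cc} that are already available: it is a \emph{linear} system in $\theta$, which immediately produces the affine representation \eqref{180222e10.14}, and the matrix $\mc{M}$ of \eqref{matrix} is entrywise non-negative with spectral radius strictly less than one by Lemma \ref{180218lem8.1}. The latter will deliver $\Delta_1>0$ through a short path-existence argument in the directed graph of $\mc{M}$.

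Concretely, I would first rewrite \eqref{iteration-formula-cc} in the matrix form
\[
\vec{x} \;=\; \mc{M}\vec{x} \;+\; \vec{c} \;+\; \theta\,\vec{e},
\]
where $\vec{x}=(\widetilde{w_1},\widetilde{w_2},\widetilde{\eta_2},\ldots,\widetilde{w_{k-1}},\widetilde{\eta_{k-1}})^T$, the constant vector $\vec{c}$ collects the contributions of $A_1,\ldots,A_{k-1}$ together with the boundary value $\widetilde{\eta_1}=0$, and $\vec{e}$ has the single non-zero entry $\alpha_{k-1}$ in the slot corresponding to $\widetilde{w_{k-1}}$ (coming from the term $\alpha_{k-1}\widetilde{w_k}=\alpha_{k-1}\theta$ appearing in the equation for $\widetilde{w_{k-1}}$). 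By Lemma \ref{180218lem8.1} the spectral radius of $\mc{M}$ is strictly less than one, so $I-\mc{M}$ is invertible with Neumann expansion $(I-\mc{M})^{-1}=\sum_{r\ge 0}\mc{M}^{r}$, and the unique solution is $\vec{x}=(I-\mc{M})^{-1}(\vec{c}+\theta\vec{e})$. Reading off the first coordinate then yields the affine formula \eqref{180222e10.14} with $\Delta_2=[(I-\mc{M})^{-1}\vec{c}]_1$ and $\Delta_1=[(I-\mc{M})^{-1}\vec{e}]_1$.

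To obtain the strict inequality $\Delta_1>0$, observe that every entry of $\mc{M}$ is built from the non-negative quantities $\alpha_l,\,1-\alpha_l,\,\beta_l,\,1-\beta_l,\,\tfrac{l+1}{l}$, hence each $\mc{M}^{r}$ is entrywise non-negative and $\Delta_1\ge 0$ automatically. I would then exhibit a single positive-weight path from index $1$ to index $2k-4$ (the slot of $\widetilde{w_{k-1}}$) in the directed graph of $\mc{M}$: using $m_{1,2}=\alpha_1$ together with the entries $m_{2i,2i+2}=\alpha_{i+1}$, the ``pure-$\widetilde{w}$'' chain
\[
1 \xrightarrow{\alpha_1} 2 \xrightarrow{\alpha_2} 4 \xrightarrow{\alpha_3} 6 \longrightarrow \cdots \xrightarrow{\alpha_{k-2}} 2k-4
\]
has length $k-2$ and weight $\alpha_1\alpha_2\cdots\alpha_{k-2}>0$. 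Consequently $[\mc{M}^{k-2}]_{1,2k-4}\ge\alpha_1\cdots\alpha_{k-2}$, and
\[
\Delta_1 \;\ge\; \alpha_{k-1}\cdot[\mc{M}^{k-2}]_{1,2k-4} \;\ge\; \alpha_1\alpha_2\cdots\alpha_{k-1} \;>\; 0,
\]
since each $\alpha_l\in(0,1)$ by the interpolation identity that defines it.

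The only substantive input is Lemma \ref{180218lem8.1}, which is doing double duty here: it guarantees uniqueness of the solution (so that the affine representation is unambiguous) and the convergence of the Neumann series that reduces the positivity question to the purely combinatorial statement that there is a single positive-weight path from node $1$ to node $2k-4$ in the graph of $\mc{M}$. Once these are in hand, the explicit chain exhibited above closes the argument, and I do not anticipate any serious obstacle beyond a careful verification of the index bookkeeping in \eqref{matrix}.
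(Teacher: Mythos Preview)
Your proof is correct and shares its scaffolding with the paper's: both rewrite \eqref{iteration-formula-cc} in the matrix form $\vec{x}=\mc{M}\vec{x}+\vec{c}+\theta\vec{e}$, invoke Lemma~\ref{180218lem8.1} to justify the Neumann series $(\mc{I}-\mc{M})^{-1}=\sum_{r\ge 0}\mc{M}^r$, and read off the affine dependence on $\theta$ together with the entrywise non-negativity of $(\mc{I}-\mc{M})^{-1}$, yielding $\Delta_1\ge 0$.

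The two arguments diverge only at the final step, establishing $\Delta_1>0$. The paper argues by contradiction: assuming $\Delta_1=0$, it uses the first two equations of \eqref{iteration-formula-cc} inductively to propagate constancy in $\theta$ from $\widetilde{w_1}$ to all $\widetilde{w_l},\widetilde{\eta_l}$, forcing an entire column of $(\mc{I}-\mc{M})^{-1}$ to vanish and contradicting invertibility. You instead exhibit the explicit ``pure-$\widetilde{w}$'' path $1\to 2\to 4\to\cdots\to 2k-4$ in the directed graph of $\mc{M}$, which gives the quantitative lower bound $\Delta_1\ge \alpha_1\alpha_2\cdots\alpha_{k-1}$. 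Your route is more direct and constructive, and even yields an explicit positive constant; the paper's route avoids checking the index bookkeeping but is less informative. The one point you assert without proof is that each $\alpha_l\in(0,1)$; this follows from the inequality $n_l\,p_k>p_l\,n_k$ (for $1\le l\le k-1$), which the paper verifies by direct calculation just after \eqref{180302e8.18a}, so you may wish to cite that.
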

\begin{proof}[Proof of Claim \ref{180222claim10.2}.]
A claim of this form already appeared in Bourgain, Demeter and Guth \cite{BDG}, see equation (89) in page 680. Here we present  a proof using the language of matrices, which is much cleaner.

Recall the definition of the matrix $\mc{M}$ by \eqref{matrix}. The linear system \eqref{iteration-formula-cc} can be formulated as
\beq
\begin{split}
(\widetilde{w_1}, \widetilde{w_2}, \widetilde{\eta_2},  \dots, \widetilde{w_{k-1}}, \widetilde{\eta_{k-1}})^T&=\mc{M} (\widetilde{w_1}, \widetilde{w_2}, \widetilde{\eta_2},  \dots, \widetilde{w_{k-1}}, \widetilde{\eta_{k-1}})^T\\
		&+(*, \dots, *, (1-\alpha_l) A_l+\alpha_l \theta, *)^T.
\end{split}
\endeq
Here each $*$ represents an irrelevant constant. Solving this linear system, we obtain
\beq
(\widetilde{w_1}, \widetilde{w_2}, \widetilde{\eta_2},  \dots, \widetilde{w_{k-1}}, \widetilde{\eta_{k-1}})^T=(\mc{I}_{2k-3}-\mc{M})^{-1} (*, \dots, *, (1-\alpha_l) A_l+\alpha_l \theta, *)^T.
\endeq
Here $\mc{I}_{2k-3}$ represents the identity matrix of rank $2k-3$. Notice that every entry of $(\mc{I}_{2k-3}-\mc{M})^{-1}$ is non-negative as
\beq
(\mc{I}_{2k-3}-\mc{M})^{-1}= \mc{I}_{2k-3}+\mc{M}+ \mc{M}^2+\dots
\endeq
This proves \eqref{180222e10.14} with some $\Delta_1\ge 0$. Hence what remains is to prove that $\Delta_1>0$. We argue by contradiction, and assume that $\Delta_1=0$. In other words, when we send $\theta\to +\infty$, the solution $\widetilde{w_1}(\theta)$ stays as a constant. We will prove that every $\widetilde{w_l}$ and $\widetilde{\eta_l}$ with $2\le l\le k-1$ also stays as a constant. It further implies that every entry in the second last column of $(\mc{I}_{2k-3}-\mc{M})^{-1}$ is zero, contradicting the fact that $\mc{I}_{2k-3}-\mc{M}$ is invertible.

To prove that every $\widetilde{w_l}$ and $\widetilde{\eta_l}$ with $2\le l\le k-1$ stays as a constant as $\theta\to +\infty$, we apply inductions. First of all, notice that $\widetilde{w_l}$ and $\widetilde{\eta_l}$ are linear and non-decreasing in $\theta$. Suppose so far we have proven that $\widetilde{w_{l'}}$ and $\widetilde{\eta_{l'}}$ are constant in $\theta$ for every $1\le l'\le l$. From the first equation in \eqref{iteration-formula-cc} we see that $\widetilde{w_{l+1}}$ is also constant in $\theta$. In the end, the second equation from \eqref{iteration-formula-cc} tells us that $\widetilde{\eta_l}$ is also constant in $\theta$. This finishes the proof of Claim \ref{180222claim10.2}.
\end{proof}

In order to prove \eqref{180221e10.12}, from Claim \ref{180222claim10.2} we conclude that it suffices to consider the following linear system
\beq\label{iteration-formula-dd}
\begin{split}
& \widetilde{w_l}= (1-\alpha_l) (A_l+\widetilde{\eta_l} )+\alpha_{l} \widetilde{w_{l+1}}: 1\le l\le k-1;\\
& \widetilde{\eta_l} =\frac{l+1}{l}(\widetilde{\eta_{l-1}}+A_{l-1})(1-\beta_l)+\frac{l+1}{l} \beta_l \widetilde{w_l}: 2\le l\le k-1;\\
& \widetilde{\eta_1}= 0; \widetilde{w_{1}}=0;
\end{split}
\endeq
and prove that its solution satisfies
\beq
\widetilde{w_k}\ge -\Gamma_{d, k}(p).
\endeq
Here to save some notation, we are still using the same names for variables. The advantage of working with \eqref{iteration-formula-dd} is that one can solve it directly, without invoking inverses of matrices. Observe that \eqref{iteration-formula-dd} must have a solution of the form
\beq
-\widetilde{w_k}=\sum_{l=1}^{k-1} \lambda_l A_l,
\endeq
because of the initial conditions $\widetilde{\eta_1}=\widetilde{w_1}=0$. Here $\{\lambda_l\}_{l=1}^{j-1}$ is a sequence of real numbers. We prove
\begin{lem}\label{180222lemma10.3}
Under the above notation, we have
\beq
\sum_{l=1}^{k-1}\frac{\lambda_l}{l}=1.
\endeq
In other words, if we let $A_l=1/l$ and solve the linear system \eqref{iteration-formula-dd}, then its unique solution must satisfy $-\widetilde{w_k}=1$.
\end{lem}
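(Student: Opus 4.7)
The plan is to exploit the adjoint structure of the linear system \eqref{iteration-formula-dd}. Abbreviate $n_l := n_d(l)$ and $p_l := p_d(l)$ as elsewhere in the paper. For weights $(a_l)_{l=1}^{k-1}$ and $(b_l)_{l=2}^{k-1}$, I form the linear combination obtained by multiplying the first equation at level $l$ by $a_l$, the second equation at level $l$ by $b_l$, and summing. I choose the weights so that, after this combination, the coefficients of every $\widetilde{w_l}$ and $\widetilde{\eta_l}$ for $2 \le l \le k-1$ vanish; the coefficients of $\widetilde{w_1}$ and $\widetilde{\eta_1}$ need not vanish because these two quantities are already zero by the initial conditions. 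The surviving identity will take the form $-a_{k-1}\alpha_{k-1}\widetilde{w_k} = \sum_{l=1}^{k-1} B_l A_l$, where $B_l = b_l$ for $l \ge 2$ and $B_1 := a_1(1-\alpha_1) + \tfrac{3}{2}b_2(1-\beta_2)$ is the compound coefficient of $A_1$. Specialising to $A_l = 1/l$, the lemma reduces to the single scalar identity $\sum_{l=1}^{k-1} B_l/l = a_{k-1}\alpha_{k-1}$.

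The cancellation constraints force $(a_l,b_l)$ to satisfy the dual recursions $a_l = a_{l-1}\alpha_{l-1} + b_l\frac{l+1}{l}\beta_l$ for $2 \le l \le k-1$ and $b_l = a_l(1-\alpha_l) + b_{l+1}\frac{l+2}{l+1}(1-\beta_{l+1})$ for $2 \le l \le k-2$, together with the boundary condition $b_{k-1} = a_{k-1}(1-\alpha_{k-1})$. I will propose the ansatz $a_l = n_l$ and $b_l = \frac{d\,p_l}{2(l+1)}$. Verifying this ansatz reduces, after plugging in the explicit formulas \eqref{basic-cal} for $\alpha_l,\beta_l,1-\alpha_l,1-\beta_l$, to the elementary identities $n_l+1 = \binom{d+l}{l}$, $l(n_l+1)=(d+l)(n_{l-1}+1)$, and $p_l-p_{l-1}=2(n_{l-1}+1)$, all immediate from $p_l = \frac{2l}{d+1}\binom{d+l}{d}$. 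A short additional computation then also yields $B_1 = \frac{d\,p_1}{4}$, so the formula for $b_l$ extends consistently to $l=1$.

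The final identity $\sum_{l=1}^{k-1}\frac{d\,p_l}{2l(l+1)} = a_{k-1}\alpha_{k-1}$ is a clean combinatorial calculation. Using $p_l = \frac{2l(n_l+1)}{d+1}$ and $\frac{\binom{d+l}{l}}{l+1} = \frac{1}{d}\binom{d+l}{l+1}$, the left-hand side rewrites as $\frac{1}{d+1}\sum_{l=1}^{k-1}\binom{d+l}{l+1}$, which by the hockey-stick identity $\sum_{m=0}^{k}\binom{d+m-1}{m} = \binom{d+k}{k}$ simplifies to $\frac{n_k-d}{d+1}$. A direct factorisation of $a_{k-1}\alpha_{k-1} = \frac{n_{k-1}p_k - n_k p_{k-1}}{p_k - p_{k-1}}$, using the same identities to write $n_{k-1}p_k - n_k p_{k-1} = \frac{2k(n_k+1)(n_k-d)}{(d+1)(d+k)}$ and $p_k - p_{k-1} = \frac{2k(n_k+1)}{d+k}$, likewise yields $\frac{n_k-d}{d+1}$, which completes the proof.

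The main obstacle is discovering the correct ansatz $(a_l, b_l)$. Without it, solving the backward dual recursion from $b_{k-1} = a_{k-1}(1-\alpha_{k-1})$ yields the unwieldy expression $b_l = \frac{l\,p_l(n_l-n_{l-1})}{(l+1)(p_l-p_{l-1})}$, whose collapse to the clean closed form $\frac{d\,p_l}{2(l+1)}$ relies precisely on the less-obvious identity $p_l - p_{l-1} = 2(n_{l-1}+1)$. Once this identity is spotted, the dual recursions, the extension to $l=1$, and the final combinatorial matching all go through essentially mechanically.
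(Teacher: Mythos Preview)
Your proof is correct and takes a genuinely different route from the paper's. The paper proceeds by guessing closed-form expressions for the full solution,
\[
\widetilde{w_j}=-\frac{n_k(n_1 p_j-p_1n_j)}{n_j(n_1p_k-p_1 n_k)},\qquad
\widetilde{\eta_j}=\frac{n_1[(n_j-n_1)p_k-(j-1)(p_{j+1}-p_j)n_k]}{j (n_{j+1}-n_j)(n_1 p_k-p_1n_k)},
\]
and then verifies by direct substitution that these satisfy \eqref{iteration-formula-dd} with $A_l=1/l$; the verification involves a fair amount of brute-force algebra (expanding in $T_j=\binom{d+j}{j}$ and matching coefficients). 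Your approach is the adjoint one: you never solve for $\widetilde{w_l},\widetilde{\eta_l}$ at all, but instead find dual weights $(a_l,b_l)$ so that the weighted sum of the equations collapses to a single scalar relation between $\widetilde{w_k}$ and the data $A_l$. The ansatz $a_l=n_l$, $b_l=\frac{d\,p_l}{2(l+1)}$ is clean, and all three dual recursions (for $a_l$, for $b_l$, and the boundary $b_{k-1}$) reduce to one and the same identity $n_l-n_{l-1}=\frac{d(p_l-p_{l-1})}{2l}$, equivalent to your $p_l-p_{l-1}=2(n_{l-1}+1)$. The final matching $\sum_{l=1}^{k-1}\frac{d\,p_l}{2l(l+1)}=a_{k-1}\alpha_{k-1}=\frac{n_k-d}{d+1}$ via the hockey-stick identity is correct.

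What each approach buys: the paper's method gives you the whole solution vector, which is not needed for the lemma but could in principle be reused; your method is shorter, more conceptual, and isolates the single combinatorial identity that drives everything. One small presentational point: your narrative says the coefficient of $\widetilde{w_1}$ ``need not vanish'' because $\widetilde{w_1}=0$, but in fact $a_1$ is still pinned down (up to the global scale) by the cancellation of $\widetilde{w_2}$ via $a_2=a_1\alpha_1+\tfrac{3}{2}b_2\beta_2$; you do use $a_1=n_1$ consistently, so this is only a wording issue, not a gap.
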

Once we have proven Lemma \ref{180222lemma10.3}, Lemma \ref{180205lemma7.5} will follow immediately from
\begin{lem}\label{180222lemma10.4}
For every $1\le l\le k-1$, it holds that
\beq
l A_l=\Gamma_{d, l}(q_{d, k}(l))+d(\frac{1}{q_{d, k}(l)}-\frac{1}{p})\le \Gamma_{d, k}(p),
\endeq
for every $2\le p<\infty$.
\end{lem}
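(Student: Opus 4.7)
The statement reduces, after unravelling definitions, to proving the inequality
$$\Gamma_{d, l}(q) + d\left(\frac{1}{q} - \frac{1}{p}\right) \le \Gamma_{d, k}(p) \quad \text{where} \quad q = q_{d, k}(l) = \max\{2, p\, p_d(l)/p_d(k)\}.$$
I would start by splitting into two cases. When $q = 2$, the combinatorial estimate $\mathcal{K}_{j, l} \ge j$ (valid for all $j \ge 1$, $l \ge 1$, from $\mathcal{K}_{j, l} = \tfrac{jl}{j+1}\binom{l+j}{j} \ge j$) forces $L_{j}(2, l) = (j - \mathcal{K}_{j, l})/2 \le 0$, and $T(2) = 0$. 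Hence $\Gamma_{d, l}(2) = 0$ and $l A_l = d(\tfrac{1}{2} - \tfrac{1}{p}) = T(p) \le \Gamma_{d, k}(p)$ trivially.

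In the main case $q = p \mathcal{K}_{d, l}/\mathcal{K}_{d, k}$ (using $p_d(m) = 2\mathcal{K}_{d, m}/d$), I would write $\Gamma_{d, l}(q) = \max\{T(q),\, L_1(q, l), \dots, L_d(q, l)\}$ with $T(r) = d(\tfrac{1}{2} - \tfrac{1}{r})$ and $L_j(r, m) = (1 - \tfrac{1}{r})j - \mathcal{K}_{j, m}/r$. Since the additive shift $d(\tfrac{1}{q} - \tfrac{1}{p})$ commutes with the maximum, it suffices to control each summand. The $T(q)$-summand transforms cleanly: $T(q) + d(\tfrac{1}{q} - \tfrac{1}{p}) = T(p) \le \Gamma_{d, k}(p)$. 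The $L_j(q, l)$-summand becomes an affine function of $1/p$,
$$f_j(p) := L_j(q, l) + d\Bigl(\tfrac{1}{q} - \tfrac{1}{p}\Bigr) = j + \frac{s_j}{p}, \quad s_j := \frac{(d - j - \mathcal{K}_{j, l})\,\mathcal{K}_{d, k}}{\mathcal{K}_{d, l}} - d,$$
and I must prove $f_j(p) \le \Gamma_{d, k}(p)$ for each $j \in \{1, \dots, d\}$.

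For $j = d$ a direct substitution gives $s_d = -(d + \mathcal{K}_{d, k})$, hence $f_d(p) \equiv L_d(p, k) \le \Gamma_{d, k}(p)$ identically. For $j < d$, I would use two key facts: (a) the ratio $\mathcal{K}_{j, k}/\mathcal{K}_{j, l}$ is strictly increasing in $j$, since $\mathcal{K}_{j, k}/\mathcal{K}_{j, l} = \tfrac{k}{l}\binom{k+j}{j}/\binom{l+j}{j}$ and the successive ratio is $(k + j + 1)/(l + j + 1) > 1$; and (b) at the boundary point $p_0 = 2\mathcal{K}_{d, k}/\mathcal{K}_{d, l}$ of Case B, $f_j(p_0) = T(p_0) + (j - \mathcal{K}_{j, l})/2 \le T(p_0)$, while at $1/p = 0$, $f_j = j \le d$. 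Since $f_j(1/p)$ is affine, one argues by case analysis on $j$: when $j \le d/2$, the affine function $T(p) - f_j(p)$ is nonnegative at both endpoints of the interval $1/p \in [0, 1/p_0]$ (with values $d/2 - j \ge 0$ and $(\mathcal{K}_{j, l} - j)/2 \ge 0$), hence nonnegative throughout, giving $f_j \le T \le \Gamma_{d, k}$. When $d/2 < j < d$, one compares $f_j$ to a suitable $L_{j^{\ast}}(p, k)$ with $j^{\ast} \ge j$ (typically $j^{\ast} = j$, otherwise $j^{\ast} = d$ through an intermediate step when $l = 1$), checking that the slope of $f_j$ is at least as negative and the intercepts satisfy the required inequality; here monotonicity (a) converts into the algebraic identity $(d - j - \mathcal{K}_{j, l})\mathcal{K}_{d, k} \le (d - j - \mathcal{K}_{j, k})\mathcal{K}_{d, l}$ that guarantees $f_j \le L_j(p, k)$ where needed.

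The main obstacle is the bookkeeping in the last case ($d/2 < j < d$ with $\tau_j = s_j + (j + \mathcal{K}_{j, k}) > 0$, where the $L_j(p, k)$-comparison fails and the $T(p)$-comparison fails at large $p$); here one must exhibit an alternative $j^{\ast} \in (j, d]$ making the comparison work, using (a) and the explicit formula $\mathcal{K}_{j, m} = \tfrac{jm}{j+1}\binom{m+j}{j}$. Once all affine pieces $T(p), f_1(p), \dots, f_d(p)$ are bounded by elements of $\{T(p), L_1(p, k), \dots, L_d(p, k)\}$, taking the maximum gives $lA_l \le \Gamma_{d, k}(p)$ as required.
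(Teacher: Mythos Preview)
Your overall strategy coincides with the paper's: dispose of the case $q_{d,k}(l)=2$ (where $\Gamma_{d,l}(2)=0$), and in the complementary case split according to which piece realises the maximum in $\Gamma_{d,l}(q)$, comparing to $T(p)=d(\tfrac12-\tfrac1p)$ when $j\le d/2$ and to $L_j(p,k)$ when $j>d/2$. Your endpoint argument for $j\le d/2$ and the identity $f_d\equiv L_d(\,\cdot\,,k)$ are both correct and match the paper.

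The gap is in the case $d/2<j<d$ with $l\ge 2$. You assert that the monotonicity of $\mathcal K_{j,k}/\mathcal K_{j,l}$ in $j$ ``converts into'' the inequality
\[
(d-j-\mathcal K_{j,l})\,\mathcal K_{d,k}\;\le\;(d-j-\mathcal K_{j,k})\,\mathcal K_{d,l},
\]
but it does not. Monotonicity gives only $\mathcal K_{j,l}\mathcal K_{d,k}\ge \mathcal K_{j,k}\mathcal K_{d,l}$, whereas the target rearranges to
\[
(d-j)(\mathcal K_{d,k}-\mathcal K_{d,l})\;\le\;\mathcal K_{j,l}\mathcal K_{d,k}-\mathcal K_{j,k}\mathcal K_{d,l},
\]
with both sides positive; one needs a \emph{quantitative} lower bound on the right-hand side, not merely its sign. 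Indeed for $l=1$ the inequality is false (take $d=5$, $k=4$, $j=3$: then $\tau_3=19>0$), so the assumption $l\ge2$ must enter somewhere. This factorial inequality is the whole content of the lemma for $j>d/2$, and the paper proves it (for $l\ge2$) via a telescoping argument: writing the left-hand side as $(d-j)S_{-1}$ with $S_h=\sum_{h\le s<d}\Delta_s$ and observing $\Delta_s\ge\Delta_{s-1}$, one gets $(d-j)S_{-1}\le(d+1)S_j$; the remaining comparison reduces to the elementary bound $l(l+1)\cdots(l+j)/(j+1)!\ge(d+1)/j$, which holds precisely when $l\ge2$ and $j\ge(d+1)/2$. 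Once this is in hand, your ``alternative $j^{\ast}$'' backup is never needed for $l\ge2$; and for $l=1$ the paper simply notes $\Gamma_{d,1}(q)=d(1-2/q)$ is realised at $j=d$, reducing directly to $f_d=L_d(\,\cdot\,,k)$, so no intermediate $j^{\ast}$ is required there either.
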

\begin{proof}[Proof of Lemma \ref{180222lem10.1}.]
Recall that $\lambda_0$ is given by \eqref{180206e7.37} and \eqref{iteration-formula}. To prove Lemma \ref{180222lem10.1}, we subtract the linear system \eqref{iteration-formula} for the system \eqref{iteration-formula-aa}. It suffices to prove that for the linear system
\beq\label{iteration-formula-ff}
\begin{split}
w_l=& \frac{d(1-\alpha_l)}{l}(\frac{1}{q_{d, k}(l)}-\frac{1}{p})+
(1-\alpha_l) \eta_l +\alpha_{l} w_{l+1}\\
& \hspace{6cm} -d(1-\alpha_l)(\frac{p_d(k)}{p\cdot p_d(l)}-\frac{1}{q_{d, k}(l)}): 1\le l\le k-1;\\
\eta_l=&
\frac{d(l+1)}{l(l-1)}\cdot \left(\frac{1}{q_{d, k}(l-1)}-\frac{1}{p} \right)(1-\beta_l)+d\cdot \frac{l+1}{l}\cdot \left(\frac{1}{q_{d, k}(l-1)}-\frac{p_d(k)}{p_d(l-1)\cdot p} \right)(1-\beta_l)+\\
& - \frac{d(l+1)}{l}(\frac{1}{q_{d, k}(l)}-\frac{p_d(k)}{p_d(l)\cdot p})+\frac{l+1}{l}\eta_{l-1}(1-\beta_l)+\frac{l+1}{l} \beta_l w_l: 2\le l\le k-1;\\
\eta_1=& 0; w_{k}=0.
\end{split}
\endeq
its unique solution satisfies
\beq\label{180222e10.24}
w_1=d(\frac{1}{q_{d, k}(1)}-\frac{n_d(k)}{n_d(1)\cdot p}).
\endeq
To prove this, we ``decompose" the system \eqref{iteration-formula-ff} into the sum of three simpler systems:
\beq\label{iteration-formula-kk}
\begin{split}
& w^{(2)}_l=(1-\alpha_l) (\eta^{(2)}_l-\frac{d}{p\cdot l}) +\alpha_{l} w^{(2)}_{l+1}: 1\le l\le k-1;\\
& \eta^{(2)}_l=\frac{l+1}{l}(\eta^{(2)}_{l-1}-\frac{d}{p(l-1)})(1-\beta_l)+\frac{l+1}{l} \beta_l w^{(2)}_l: 2\le l\le k-1;\\
& \eta^{(2)}_1= 0; w^{(2)}_{k}=d/p;
\end{split}
\endeq
and
\beq\label{iteration-formula-gg}
\begin{split}
& w^{(3)}_l= \frac{d(1-\alpha_l)}{l}\frac{p_d(k)}{p_d(l)\cdot p}+
(1-\alpha_l) \eta^{(3)}_l +\alpha_{l} w^{(3)}_{l+1}: 1\le l\le k-1;\\
& \eta^{(3)}_l=
\frac{d(l+1)}{l(l-1)}\cdot \frac{p_d(k)}{p_d(l-1)\cdot p}(1-\beta_l)+\frac{l+1}{l}\eta^{(3)}_{l-1}(1-\beta_l)+\frac{l+1}{l} \beta_l w^{(3)}_l: 2\le l\le k-1;\\
& \eta^{(3)}_1= 0; w^{(3)}_{k}=-d/p;
\end{split}
\endeq
and
\beq\label{iteration-formula-hh}
\begin{split}
& w^{(4)}_l=  (1-\alpha_l) \eta^{(4)}_l+\alpha_{l} w^{(4)}_{l+1}: 1\le l\le k-1;\\
& \eta^{(4)}_l=\frac{l+1}{l}\cdot  \eta^{(4)}_{l-1} \cdot (1-\beta_l)+\frac{l+1}{l} \beta_l w^{(4)}_l: 2\le l\le k-1;\\
& \eta^{(4)}_1= B_1; w^{(4)}_{k}=0;
\end{split}
\endeq
where in the last system we have applied a ``change of variable"
\beq
\eta^{(4)}\to \eta^{(4)}+B_l
\endeq
with
\beq
B_l:=\frac{d(l+1)}{l}(-\frac{p_d(k)}{p\cdot p_d(l)}+\frac{1}{q_{d, k}(l)}).
\endeq
To prove \eqref{180222e10.24}, it suffices to prove
\begin{claim}
It holds
\beq\label{180223e10.30a}
w^{(2)}_1=0,
\endeq
\beq\label{180223e10.29}
w^{(3)}_1=d(\frac{p_d(k)}{p\cdot p_d(1)}-\frac{n_d(k)}{n_d(1)\cdot p}),
\endeq
and
\beq\label{180223e10.30}
w^{(4)}_1=d(-\frac{p_d(k)}{p\cdot p_d(1)}+\frac{1}{q_{d, k}(1)})=\frac{B_1}{2}.
\endeq
\end{claim}

We begin with the proof of \eqref{180223e10.30}. Notice that the system \eqref{iteration-formula-hh} is exactly the same as the system of linear equations that $\sum_{j=0}^{\infty} b_j \gamma_j$ satisfies, that is, the system given by \eqref{180220e9.2}--\eqref{180220e9.4}. The only difference is between $\eta'_2=2$ and $\eta^{(4)}_1=B_1$. However, by homogeneity of these two equations, we are able to conclude immediately that
\beq
w^{(4)}_1=\eta^{(4)}_1/2,
\endeq
from the fact that $w'_1=\eta'_1/2$. This finishes the proof of \eqref{180223e10.30}. By applying Lemma \ref{180222lemma10.3} and a similar homogeneity argument, we also immediately obtain \eqref{180223e10.30a}.

It remains to prove \eqref{180223e10.29}. It suffices to prove that the solution is of the form
\beq
(d(\frac{p_d(k)}{p\cdot p_d(1)}-\frac{n_d(k)}{n_d(1)\cdot p}), * , \dots, *).
\endeq
Indeed, one can verify that
\beq
w^{(3)}_l=\frac{d}{p}\frac{p_d(1) n_d(k)(p_d(1)n_d(k)-n_d(1)p_d(k))+n_d(1)p_d(k)(n_d(l) p_d(k)-n_d(k)p_d(l))}{p_d(1)n_d(l)(n_d(1)p_d(k)-p_d(1)n_d(k))}
\endeq
with $1\le l\le k$ and
\beq
\eta^{(3)}_l=\frac{d(l+1)}{p l}\frac{p_d(k)}{p_d(l)}\frac{n_d(k)p_d(1)-n_d(1)p_d(k)+ln_d(l)p_d(k)-lp_d(l)n_d(k)}{n_d(1)p_d(k)-p_d(1)n_d(k)}
\endeq
with $2\le l\le k-1$ is indeed the unique solution to \eqref{iteration-formula-gg}. The calculation looks very similar to that in the proof of Lemma \ref{180223lem9.1}, hence we leave it out.
\end{proof}

\vspace{1cm}

\begin{proof}[Proof of Lemma \ref{180222lemma10.3}.]
To simplify our proof, we will again adopt the notation
\beq
p_d(j) \to p_j \text{ and } n_d(j) \to n_j, \text { for } 1\le j\le k.
\endeq
By the uniqueness of solutions, it suffices to prove that, when $A_l=1/l$ for every $1\le l\le k-1$, the system \eqref{iteration-formula-dd} has the solution
\beq
\widetilde{w_j}=-\frac{n_k(n_1 p_j-p_1n_j)}{n_j(n_1p_k-p_1 n_k)} \text{ with } 1\le j\le k,
\endeq
and
\beq
\widetilde{\eta_j}=\frac{n_1[(n_j-n_1)p_k-(j-1)(p_{j+1}-p_j)n_k]}{j (n_{j+1}-n_j)(n_1 p_k-p_1n_k)} \text{ with } 2\le j\le k-1.
\endeq
First of all, let us verify the first equation in \eqref{iteration-formula-dd}. By applying \eqref{basic-cal}, it becomes
\beq
\begin{split}
 -\frac{n_k}{n_j}\frac{n_1 p_j-p_1 n_j}{n_1p_k-p_1n_k}&=\frac{p_jn_k(n_{j+1}-n_j)}{n_j(n_{j+1}p_k-p_jn_k)}\left(\frac{1}{j}+\frac{n_1[(n_j-n_1)p_k-(j-1)(p_{j+1}-p_j)n_k]}{j(n_{j+1}-n_j)(n_1p_k-p_1n_k)} \right)\\
& -\frac{n_{j+1}}{n_j}\frac{n_jp_k-p_jn_k}{n_{j+1}p_k-p_jn_k}\frac{n_k(n_1p_{j+1}-p_1n_{j+1})}{n_{j+1}(n_1p_k-p_1n_k)}.
\end{split}
\endeq
Multiplying both sides by $j (n_1 p_k-p_1n_k)(n_{j+1}p_k-p_jn_k)$, and expanding all brackets, we obtain
\beq\label{180224e10.41}
\begin{split}
 n_1 n_k p_j p_j-n_1 n_k p_j p_{j+1}&=-j n_1 n_j p_{j+1} p_k-(j+1) n_{j+1} n_k p_1 p_j+ \\
 & (j+1)n_1 n_{j+1} p_j p_k +(j+1)n_j n_k p_1 p_j - n_1 n_1 p_j p_k.
\end{split}
\endeq
By applying \eqref{180220e9.21} to the left hand side, we see that \eqref{180224e10.41} is equivalent to
\beq
-j n_j p_{j+1}+(j+1)n_{j+1}p_j-n_1p_j=0.
\endeq
This can be verified via a direct calculation.

Next, let us verify the second equation in \eqref{iteration-formula-dd}. We need to show
\beq\label{180225e10.43}
\begin{split}
& \frac{j}{j+1} \frac{n_1[n_j p_k-n_1 p_k-(j-1)n_k (p_{j+1}-p_j)]}{j (n_{j+1}-n_j)(n_1 p_k-p_1 n_k)}\\
& = \left(\frac{n_1 [(n_{j-1}-n_1)p_k-(j-2)(p_j-p_{j-1})n_k]}{(j-1)(n_j-n_{j-1})(n_1 p_k -p_1 n_k)}+\frac{1}{j-1} \right)\frac{p_{j-1}(p_k n_j-p_j n_k)}{p_j (p_k n_j-p_{j-1}n_k)}\\
& -\frac{p_k n_j (p_j-p_{j-1})}{p_j (n_j p_k-p_{j-1}n_k)}\frac{n_k(n_1 p_j-p_1 n_j)}{n_j (n_1 p_k-p_1 n_k)}.
\end{split}
\endeq
In the above expression, we have not found much cancellation. Hence we proceed as follows: Let
\beq
T_j=\binom{d+j}{j}.
\endeq
We express every term involving $j$ by $T_j$, and obtain
\beq\label{180225e10.45}
\begin{split}
& n_j=T_j-1; \,\, p_j=\frac{2j}{d+1} T_j; \,\, n_{j+1}=\frac{d+j+1}{j+1} T_j-1; \\
& p_{j+1}=\frac{2(d+j+1)}{d+1} T_j; \,\, n_{j-1}=\frac{j}{d+j} T_j-1; \,\, p_{j-1}=\frac{2j(j-1)}{(d+1)(d+j)}T_j.
\end{split}
\endeq
Moreover, $n_1=d$ and $p_1=2$. By substituting \eqref{180225e10.45} into \eqref{180225e10.43} and expanding all brackets, we can check easily that \eqref{180225e10.43} indeed holds true. This finishes the proof of Lemma \ref{180222lemma10.3}.
\end{proof}
\vspace{1cm}

\begin{proof}[Proof of Lemma \ref{180222lemma10.4}.]
In the proof, we continue to use a set of simplified notation
\beq
p_l \text{ for } p_d(l) \text{ and } q_l \text{ for } q_{d, k}(l).
\endeq
We are proving
\beq\label{180225e10.47}
\Gamma_{d, l}(q_l)+ d(\frac{1}{q_l}-\frac{1}{p})\le \Gamma_{d, k}(p).
\endeq
From now on we fix an $l$ in the argument below. If $l=1$, then
\beq
\Gamma_{d, 1}(p)=d(1-\frac{2}{p}).
\endeq
Hence
\beq
\Gamma_{d, 1}(q_1)+d(\frac{1}{q_1}-\frac{1}{p}) \le \max\{d(\frac{1}{2}-\frac{1}{p}), d(1-\frac{1}{p})-\frac{\mc{K}_{d, k}}{p}\}\le \Gamma_{d, k}(p).
\endeq
Hence in the following we always assume $l\ge 2$.\\

If $q_l=2$, then $\Gamma_{d, l}(2)=0$. The desired estimate \eqref{180225e10.47} is trivial. Next we assume that $q_l>2$. If
\beq
\Gamma_{d, l}(q_l)=d(\frac{1}{2}-\frac{1}{q_l}),
\endeq
then by definition
\beq
\Gamma_{d, l}(q_l)+ d(\frac{1}{q_l}-\frac{1}{p})\le \Gamma_{d, k}(p).
\endeq
Hence we assume from now on
\beq
\Gamma_{d, l}(q_l)=(1-\frac{1}{q_l})j-\frac{\mc{K}_{j, l}}{q_l} \text{ for some } j=j(l)\in [1, d].
\endeq
If $j\le d/2$, we claim that
\beq
\Gamma_{d, l}(q_l)+d(\frac{1}{q_l}-\frac{1}{p})\le d(\frac{1}{2}-\frac{1}{p}).
\endeq
Indeed, this claim is equivalent to
\beq
j-\frac{j}{q_l}-\frac{\mc{K}_{j, l}}{q_l}\le \frac{d}{2}-\frac{d}{q_l},
\endeq
which can be checked easily. Indeed the above inequality is linear in $1/q_l$. Moreover it is very easy to verify it for both $q_l=2$ and $q_l=\infty$. Hence in the rest of the proof, we assume that $j\ge (d+1)/2$.\\

Now we claim that
\beq\label{180225e10.57}
j(1-\frac{p_k}{p p_l})-\frac{\mc{K}_{j, l} p_k}{p p_l}+d(\frac{p_k}{p p_l}-\frac{1}{p})\le j(1-\frac{1}{p})-\frac{\mc{K}_{j, k}}{p},
\endeq
which clearly implies \eqref{180225e10.47}. First of all, \eqref{180225e10.57} is equivalent to
\beq
\frac{j (l-1)! (k+d)!}{(k-1)! (d+l)!}+\frac{j (k+d)! (j+l)!}{(j+1)! (k-1)! (d+l)!}-\frac{d(l-1)!(k+d)!}{(k-1)! (d+l)!}+d-j-\frac{j (k+j)!}{(j+1)! (k-1)!} \ge 0,
\endeq
which is further equivalent to
\beq\label{180225e10.59}
(d-j)\Big((l-1)! (k+d)!-(k-1)! (d+l)! \Big)\le \frac{j}{(j+1)!} \Big((k+d)! (l+j)!-(k+j)! (l+d)! \Big).
\endeq
To proceed, we define
\beq
\begin{split}
\Delta_s& :=(k+d)! (l-1)! -\frac{(k+s)! (l-1)! (d+l)!}{(l+s)!}-\\
& \Big((k+d)! (l-1)! -\frac{(k+s+1)! (l-1)! (l+d)!}{(l+s+1)!}\Big),
\end{split}
\endeq
for $-1\le s\le d-1$, and
\beq
S_h:=(k+d)! (l-1)! -\frac{(k+h)! (l-1)! (l+d)!}{(l+h)!}=\sum_{h\le s<d} \Delta_s,
\endeq
for $-1\le h<d$. Observe that
\beq
\Delta_s\ge \Delta_{s-1},
\endeq
since $k\ge l+1$. Hence
\beq
S_{-1}\le \frac{d+1}{d-j} S_j.
\endeq
This implies that the left hand side of \eqref{180225e10.59} is
\beq
\begin{split}
& \le (d+1)\Big((k+d)!(l-1)!-\frac{(k+j)! (l-1)! (l+d)!}{(l+j)!} \Big)\\
& = \frac{d+1}{l(l+1)\dotsm (l+j)} \Big((k+d)! (l+j)!-(k+j)! (l+d)! \Big).
\end{split}
\endeq
It remains to prove
\beq
\frac{d+1}{l(l+1)\dotsm (l+j)} \le \frac{j}{(j+1)!}.
\endeq
Moreover this is indeed the case since
\beq
\frac{l\dotsm (l+j)}{(j+1)!}\ge \frac{(j+2)!}{(j+1)!}\ge j+2\ge \frac{d+1}{j}.
\endeq
This finishes the proof of our lemma.

\end{proof}

\section{\bf Verifying the Brascamp-Lieb condition}\label{section:blcondition}

In this section we prove Theorem \ref{BLcond} which is equivalent to Theorem \ref{linear-algebra}.

Let a positive integer $d$ be the dimension in our question. We usually fix it in this section unless otherwise stated. For any positive integer $l$ we define $[l] = \{0, 1, \ldots, l\}$ for short. Note this is slightly different from the standard convention in combinatorics by also including $0$.

\begin{defi}
For a $d$-tuple $a=(a_1, \ldots, a_d)$, we define $|a| = \sum_{i=1}^d |a_i|$. For any positive integer $l$, define $\mathcal{S}_l = \mathcal{S}_l^d = \{a = (a_1, \ldots, a_d)\in\mathbb{Z}^d: a_1, \ldots, a_d \geq 0, 1 \leq |a| \leq l\} = \{(1, 0, 0, \ldots, 0), (0, 1, 0, \ldots, 0), (0, 0, \ldots, 0, 1), \ldots, (l, 0, 0, \ldots, 0), (l-1, 1, 0, \ldots, 0), \ldots, (0, 0, \ldots, 0, l)\}$. By elementary counting we have $|\mathcal{S}_l| = {{d+l} \choose l} -1$. Recall $n_d(l) = |\mathcal{S}_l| = {{d+l} \choose l} -1$.
\end{defi}

\ytableausetup{mathmode, boxsize=2.5em, centertableaux}
For example, when $d=2$ we use the following diagram to denote $\mathcal{S}_3$:

\begin{ytableau}
\none & (1, 0) & (2, 0) & (3, 0)\\
(0, 1) & (1, 1) & (2, 1) \\
(0, 2) & (1, 2) \\
(0, 3)
\end{ytableau}.

We are interested in subsets of $\mathcal{S}_l$. Hence we introduce a bit more notations.

\begin{defi}
We introduce a partial order on $d$-tuples of real numbers. For any $(b_1, b_2, \ldots, b_d)$ and $(b_1 ', b_2 ',\ldots, b_d ')$ satisfying $b_1 \leq b_1 '$, $b_2 \leq b_2 '$, $\ldots$, and $b_d \leq b_d '$ we say $(b_1, b_2, \ldots, b_d)\preceq (b_1 ', b_2 ',\ldots, b_d ')$. And those are the only partial order relations in our definition of ``$\preceq$''.
\end{defi}

\begin{defi}
For positive integers $l \leq l'$ and a subset $T \subseteq {\mathcal{S}}_l$, we denote the ``positive extension'' $T_{l'}^+ \subseteq \mathcal{S}_{l'}$ of $T$ at level $l'$ to be
\begin{equation}
T_{l'}^+ = \{(a_1, a_2, \ldots, a_d) \in \mathcal{S}_{l'}: \exists (b_1, b_2, \ldots, b_d) \in T \text{ s.t. } (b_1, b_2, \ldots, b_d) \preceq (a_1, a_2, \ldots, a_d) \}.
\end{equation}
\end{defi}

\ytableausetup{mathmode, boxsize=2.5em, centertableaux}
For example, when $d=2$, $l = 3$, $l' = 4$, if $T = \{(1, 1), (3, 0)\}$ is the following red colored subset in $\mathcal{S}_3$:
\begin{ytableau}
\none & (1, 0) & (2, 0) &*(red) (3, 0)\\
(0, 1) &*(red) (1, 1) & (2, 1) \\
(0, 2) & (1, 2) \\
(0, 3)
\end{ytableau}, then $T_4^+$ is the following red colored subset in $\mathcal{S}_4$:
\begin{ytableau}
\none & (1, 0) & (2, 0) &*(red) (3, 0) &*(red) (4, 0)\\
(0, 1) &*(red) (1, 1) &*(red) (2, 1) &*(red) (3, 1) \\
(0, 2) &*(red) (1, 2) &*(red) (2, 2)\\
(0, 3) &*(red) (1, 3)\\
(0, 4)
\end{ytableau}.

We sometimes want to study cube-like objects before looking at the more strange-looking and more complicated $\mathcal{S}_l$. Hence we introduce the following definition.

\begin{defi}
For any positive integer $l$, define $\mathcal{C}_l =\mathcal{C}_l^d = \{(a_1, \ldots, a_d)\in\mathbb{Z}^d: 0 \leq a_i \leq l\}$. For positive integers $l \leq l'$ and a subset $T \subseteq {\mathcal{C}}_l$, we denote the ``positive extension'' $T_{l'}^{\widetilde{+}} \subseteq \mathcal{C}_{l'}$ to be
\begin{equation}
T_{l'}^{\widetilde{+}} = \{(a_1, a_2, \ldots, a_d) \in \mathcal{C}_{l'}: \exists (b_1, b_2, \ldots, b_d) \in T \text{ s.t. } (b_1, b_2, \ldots, b_d) \preceq (a_1, a_2, \ldots, a_d) \}.
\end{equation}
\end{defi}

We look at an example similar to the one above. If $d = 2$, assuming $T = \{(1, 1), (3, 0)\}$ is the following red colored subset in $\mathcal{C}_3$:
\begin{ytableau}
(0, 0) & (1, 0) & (2, 0) &*(red) (3, 0)\\
(0, 1) &*(red) (1, 1) & (2, 1) & (3, 1) \\
(0, 2) & (1, 2) & (2, 2) & (3, 2) \\
(0, 3) & (1, 3) & (2, 3) & (3, 3)
\end{ytableau}, then $T_4^{\widetilde{+}}$ is the following red colored subset in $\mathcal{C}_4$:
\begin{ytableau}
(0, 0) & (1, 0) & (2, 0) &*(red) (3, 0) &*(red) (4, 0)\\
(0, 1) &*(red) (1, 1) &*(red) (2, 1) &*(red) (3, 1) &*(red) (4, 1)\\
(0, 2) &*(red) (1, 2) &*(red) (2, 2) &*(red) (3, 2) &*(red) (4, 2)\\
(0, 3) &*(red) (1, 3) &*(red) (2, 3) &*(red) (3, 3) &*(red) (4, 3)\\
(0, 4) &*(red) (1, 4) &*(red) (2, 4) &*(red) (3, 4) &*(red) (4, 4)\\
\end{ytableau}.

\begin{lem}\label{comblem}
Let $k>0$ be a positive integer. Assuming subsets $A, B \subseteq \mathcal{S}_k \bigcup \{(0, 0, \ldots, 0)\}$ satisfy that: For any $b =(b_1, \ldots, b_d) \in B$, there is a family of inductively defined subsets as the following:

$R_{d; b} \subseteq [k]$, $|R_{d; b}| = b_d$.

For $n_d \notin R_{d; b}$ we have $R_{d-1; n_d; b} \subseteq [k]$, $|R_{d-1; n_d; b}| = b_{d-1}$.

For $n_{d-1}\notin R_{d-1; n_d; b}$ and $n_d \notin R_{d, b}$ we have $R_{d-2; n_{d-1}, n_d; b} \subseteq [k]$, $|R_{d-2; n_{d-1}, n_d; b}| = b_{d-2}$, etc.

Finally for $n_2 \notin R_{2; n_3, \ldots, n_d; b}, n_3 \notin R_{3; n_4, \ldots, n_d; b}, \ldots$, and $n_d \notin R_{d; b}$ we have $R_{1; n_2, \ldots, n_d; b} \subseteq [k]$, $|R_{1; n_2, \ldots, n_d; b}| = b_1$.

Moreover the above defined sets have the following property: if some $a=(a_1, \ldots, a_d)\in A$ and some $b \in B$ satisfy $a_d \notin R_{d; b}, a_{d-1} \notin R_{d-1; a_d; b}, \ldots, a_{2} \notin R_{2; a_3, \ldots, a_d; b}$, then $a_1 \in R_{1; a_2, a_3, \ldots, a_d; b}$.

Then
\begin{equation}\label{geomineq}
|A| \leq n_d(k) + 1 - |B_k^+|.
\end{equation}
\end{lem}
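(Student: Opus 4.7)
The plan is to reformulate the inequality as a statement about dimensions of vanishing ideals, and then produce the required linearly independent vanishing polynomials via a Schwartz--Zippel style inductive construction driven by the nested $R$-structure.

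First, I would set up the polynomial/linear-algebraic framework. Let $V_k$ denote the $\mathbb R$-vector space of polynomials in $t_1, \ldots, t_d$ of total degree at most $k$; its dimension is $n_d(k) + 1$ with monomial basis $\{t^a : a \in \mathcal{S}_k \cup \{0\}\}$. Because lattice-point interpolation on the simplex $\{x \geq 0 : \sum x_i \leq k\}$ is unisolvent, the evaluation functionals at $\mathcal{S}_k \cup \{0\}$ form a dual basis of $V_k^*$, and hence for any $A \subseteq \mathcal{S}_k \cup \{0\}$,
$$|A| = \dim V_k - \dim(V_k \cap I(A)) = n_d(k)+1 - \dim(V_k \cap I(A)),$$
where $I(A)$ is the vanishing ideal. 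The lemma is therefore equivalent to $\dim(V_k \cap I(A)) \geq |B_k^+|$. To establish this, I would fix the graded lexicographic monomial order and construct, for each $b \in B$, a polynomial $P_b \in I(A)$ of total degree exactly $|b|$ with leading monomial $t^b$. Granted this family, for each $c \in B_k^+$ choose any $b(c) \in B$ with $b(c) \preceq c$ and set $Q_c := t^{c - b(c)} P_{b(c)}$; this lies in $V_k \cap I(A)$, has total degree at most $|c| \leq k$, and leading monomial $t^c$. The $|B_k^+|$ polynomials $\{Q_c\}$ have pairwise distinct leading monomials and so are linearly independent.

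Second, the construction of $P_b$ proceeds by induction on the dimension $d$, exploiting the nested $R$-data attached to $b$. In the base case $d=1$, the hypothesis forces $A \subseteq R_{1;b}$ and $P_b(t_1) := \prod_{n \in R_{1;b}}(t_1-n)$ has degree $|b|=b_1$ with leading term $t_1^{b_1}$. For $d \geq 2$, each $n_d \notin R_{d;b}$ yields through its sub-tree a valid $(d-1)$-dimensional $R$-structure whose associated slice $A^{(n_d)} := \{(a_1,\ldots,a_{d-1}) : (a_1,\ldots,a_{d-1},n_d) \in A\}$ satisfies the $(d-1)$-dimensional hypothesis. The inductive hypothesis provides a polynomial $P_{b, n_d}(t_1, \ldots, t_{d-1})$ of total degree $|b| - b_d$ with leading monomial $t_1^{b_1}\cdots t_{d-1}^{b_{d-1}}$ vanishing on $A^{(n_d)}$. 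One then assembles $P_b$ by combining these via the annihilating factor $\prod_{r \in R_{d;b}}(t_d - r)$ (which kills the slices $n_d \in R_{d;b}$ automatically) together with a suitable interpolation in $t_d$ of degree $b_d$ matching the $P_{b, n_d}$'s.

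The main technical obstacle is ensuring that $P_b$ has total degree at most $|b|$ rather than the naive bound produced by Lagrange interpolation. Indeed, a direct Lagrange construction in $t_d$ over $[k] \setminus R_{d;b}$ produces a factor of degree $k - b_d$ in $t_d$, which combined with the degree-$(|b|-b_d)$ slice data would inflate the total degree well past $|b|$. The key simplification is that one does not need $P_b(\cdot, n_d)$ to equal $P_{b, n_d}$ as polynomials in $V^{d-1}_{|b|-b_d}$; one only needs vanishing on the subset $A^{(n_d)}$, which lives in the strictly smaller simplex $\mathcal{S}^{d-1}_{k-n_d} \cup \{0\}$. The slack between $\binom{k+d-1}{d-1}$ and $\binom{k-n_d+d-1}{d-1}$, accumulated across slices, should provide exactly enough room — via a descending-induction interpolation scheme that trades high $t_d$-degree contributions for lower combined degree in $(t_1, \ldots, t_{d-1})$ — to bring the total degree of $P_b$ down to $|b|$, closing the induction. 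I expect that making this degree economy precise, which amounts to a constructive analogue of Macaulay's theorem adapted to the simplicial Hilbert function, will be the most delicate step of the proof.
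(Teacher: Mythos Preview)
Your polynomial-method reformulation is elegant, and the reduction to showing $\dim(V_k\cap I(A))\ge |B_k^+|$ via unisolvence of the principal lattice is correct. However, the core construction fails: the polynomial $P_b$ of total degree $|b|$ with graded-lex leading monomial $t^b$ need not exist. Take $d=2$, $k=2$, $B=\{b\}$ with $b=(1,0)$, so $R_{2;b}=\emptyset$, and choose $R_{1;0;b}=\{0\}$, $R_{1;1;b}=\{1\}$, $R_{1;2;b}=\{0\}$. The maximal admissible $A$ is $\{(0,0),(1,1),(0,2)\}$. A degree-$1$ polynomial with nonzero $t_1$-coefficient is $c_1t_1+c_2t_2+c_3$; the three vanishing conditions give $c_3=0$, $c_1+c_2=0$, $2c_2=0$, forcing $c_1=0$. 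So no $P_b$ exists, regardless of which variable is largest in lex. The ``degree economy'' you flagged as delicate is therefore not merely delicate but actually unavailable in general: the slice polynomials $P_{b,n_d}$ cannot always be glued into something of total degree $|b|$ with the prescribed leading term.

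The lemma is nonetheless true in this example: $|A|=3=n_2(2)+1-|B_2^+|$, and $\dim(V_2\cap I(A))=3$, witnessed for instance by $t_1^2-t_1$, $t_1t_2-t_1$, $t_2^2+t_1-2t_2$. But their leading monomials are $\{t_1^2,\,t_1t_2,\,t_2^2\}$, not the set $\{t_1,\,t_1^2,\,t_1t_2\}$ indexed by $B_2^+$ that your scheme demands. So linear independence holds, but not through the monomial bijection with $B_k^+$ you propose. The paper sidesteps all of this: it proves the lemma by a purely combinatorial induction, first enlarging $B$ to reduce the simplex $\mathcal S_k$ to the cube $\mathcal C_k$, and then inducting on $d$ and on $|B|$ via a slicewise count. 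No polynomials appear; the Schwartz--Zippel flavour enters only afterwards, when this combinatorial lemma is \emph{applied} to the rank problem.
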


\begin{rem}\label{remarkonsharpness}
We claim it is possible to take $A$ to be $(\mathcal{S}_k \bigcup \{(0, 0, \ldots, 0)\}) \setminus B_k^+$. In fact if $(a_1, \ldots, a_d)$ is in such an $A$ then by definition for any $(b_1, \ldots, b_d) \in B$, one of the inequalities $0 \leq a_i < b_i$ has to hold. Therefore taking all $R_{i;*;b} = [b_i - 1]$ suffices (as a convention $[-1] = \emptyset$). Hence the above set is a possible candidate of $A$ with $(n_d(k) + 1 - |B_k^+|)$ elements. Thus (\ref{geomineq}) is actually sharp.

Still taking the previous example, when $T = 2$, $B = \{(1, 1), (3, 0)\} \in \mathcal{S}_4 \bigcup \{(0, 0)\}$ being the red colored pairs as the following:
\begin{ytableau}
(0, 0) & (1, 0) & (2, 0) &*(red) (3, 0) & (4, 0)\\
(0, 1) &*(red) (1, 1) & (2, 1) & (3, 1)\\
(0, 2) & (1, 2) & (2, 2)\\
(0, 3) & (1, 3)\\
(0, 4)
\end{ytableau}, then we can take $A$ to be the set of all blue colored pairs shown in the following diagram (where we have already colored all pairs in $B_4^+$ to be red):
\begin{ytableau}
*(blue) (0, 0) &*(blue) (1, 0) &*(blue) (2, 0) &*(red) (3, 0) &*(red) (4, 0)\\
*(blue) (0, 1) &*(red) (1, 1) &*(red) (2, 1) &*(red) (3, 1) \\
*(blue) (0, 2) &*(red) (1, 2) &*(red) (2, 2)\\
*(blue) (0, 3) &*(red) (1, 3)\\
*(blue) (0, 4)
\end{ytableau}.
\end{rem}

Lemma \ref{comblem} on $\mathcal{S}_l$ and $\mathcal{S}_k$ can be deduced from the following similar Lemma \ref{comblemonCk} on $\mathcal{C}_l$ and $\mathcal{C}_k$.

\begin{lem}\label{comblemonCk}
Let $k>0$ be a positive integer. Assuming subsets $A, B \subseteq \mathcal{C}_k$ satisfy that: For any $b =(b_1, \ldots, b_d) \in B$, there is a family of inductively defined subsets as the following:

$R_{d; b} \subseteq [k]$, $|R_{d; b}| = b_d$.

For $n_d \notin R_{d; b}$ we have $R_{d-1; n_d; b} \subseteq [k]$, $|R_{d-1; n_d; b}| = b_{d-1}$.

For $n_{d-1}\notin R_{d-1; n_d; b}$ and $n_d \notin R_{d; b}$ we have $R_{d-2; n_{d-1}, n_d; b} \subseteq [k]$, $|R_{d-2; n_{d-1}, n_d; b}| = b_{d-2}$, etc.

Finally for $n_2 \notin R_{2; n_3, \ldots, n_d; b}, n_3 \notin R_{3; n_4, \ldots, n_d; b}, \ldots$, and $n_d \notin R_{d; b}$ we have $R_{1; n_2, \ldots, n_d; b} \subseteq [k]$, $|R_{1; n_2, \ldots, n_d; b}| = b_1$.

Moreover the above defined sets have the following property: if some $a=(a_1, \ldots, a_d)\in A$ and some $b \in B$ satisfy $a_d \notin R_{d; b}, a_{d-1} \notin R_{d-1; a_d; b}, \ldots, a_{2} \notin R_{2; a_3, \ldots, a_d; b}$, then $a_1 \in R_{1; a_2, a_3, \ldots, a_d; b}$.

Then
\begin{equation}\label{geomineqCk}
|A| \leq (k+1)^d - |B_k^{\widetilde{+}}|.
\end{equation}
\end{lem}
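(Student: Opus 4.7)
The plan is to induct on the dimension $d$. For the base case $d=1$, the conditions on $a_d, \ldots, a_2$ in the hypothesis are empty, so the main implication forces $A \subseteq R_{1;b}$ for every $b \in B$. Hence $|A| \leq \min_{b\in B}|R_{1;b}| = \min B$, matching $(k+1) - |B_k^{\widetilde{+}}|$ since $B_k^{\widetilde{+}} = \{\min B, \min B + 1, \ldots, k\}$ (the case $B = \emptyset$ is trivial).

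For the inductive step, I would slice $A$ by its last coordinate. For each $c \in [k]$, set $A_c = \{(a_1, \ldots, a_{d-1}) : (a_1, \ldots, a_{d-1}, c) \in A\}$ and $\tilde B_c = \{(b_1, \ldots, b_{d-1}) : b \in B,\; c \notin R_{d;b}\}$, both inside $\mathcal{C}_k^{d-1}$. For each $\tilde b \in \tilde B_c$, pick a witness $b \in B$ projecting to $\tilde b$ with $c \notin R_{d;b}$, and inherit the nested data $R_{d-1;c;b}, \ldots, R_{1;\ast;c;b}$ as the hypothesis for $\tilde b$ in the $(d-1)$-dimensional problem. The choice $c \notin R_{d;b}$ is precisely what strips off the leading precondition of the main implication, so the pair $(A_c, \tilde B_c)$ satisfies the hypothesis of the lemma in dimension $d-1$. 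The inductive hypothesis and summation over $c$ yield
\[
|A| = \sum_{c=0}^k |A_c| \leq (k+1)^d - \sum_{c=0}^k |(\tilde B_c)_k^{\widetilde{+}}|.
\]

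The remaining step, and what I expect to be the main obstacle, is the purely combinatorial inequality $\sum_{c=0}^k |(\tilde B_c)_k^{\widetilde{+}}| \geq |B_k^{\widetilde{+}}|$. I would prove it by exchanging the order of summation and slicing instead over $\tilde a \in \mathcal{C}_k^{d-1}$. Setting $B^{\tilde a} = \{b \in B : (b_1, \ldots, b_{d-1}) \preceq \tilde a\}$, the left-hand side counts $\tilde a$ for each $c \in \bigcup_{b \in B^{\tilde a}}([k]\setminus R_{d;b})$, contributing $k+1 - |\bigcap_{b \in B^{\tilde a}} R_{d;b}|$ layers; dually, $|B_k^{\widetilde{+}}| = \sum_{\tilde a}\bigl(k+1-\min_{b \in B^{\tilde a}} b_d\bigr)$, where an empty $B^{\tilde a}$ contributes zero. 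The elementary bound $|\bigcap_{b \in B^{\tilde a}} R_{d;b}| \leq \min_{b \in B^{\tilde a}} |R_{d;b}| = \min_{b \in B^{\tilde a}} b_d$ then gives the layer inequality for every $\tilde a$, and summing closes the induction.

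The natural temptation is to try to prove the stronger pointwise inequality $|(\tilde B_c)_k^{\widetilde{+}}| \geq |\{(b_1, \ldots, b_{d-1}) : b \in B,\; b_d \leq c\}_k^{\widetilde{+}}|$ for each $c$ separately, but small examples (such as $d=2$, $k=2$, $B=\{(1,1),(2,2)\}$ with $R_{2;(1,1)}=\{1\}$, $R_{2;(2,2)}=\{0,1\}$) show that this already fails at individual layers; only the summed version survives. Switching the slicing direction to range over $\tilde a$ is the key maneuver that reduces the global statement to the one-line fact $|\bigcap R_i| \leq \min|R_i|$, and it is consistent with the sharpness example of Remark \ref{remarkonsharpness}, where taking $R_{i;\ast;b} = \{0, 1, \ldots, b_i - 1\}$ turns every ``bad'' set into the canonical cone $\{a : b \preceq a\}$ and saturates all of the inequalities simultaneously.
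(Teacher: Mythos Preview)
Your proof is correct, and it is genuinely cleaner than the paper's. Both arguments induct on $d$, slice $A$ by its last coordinate, and reduce to the same combinatorial inequality $\sum_{c=0}^k |(\tilde B_c)_k^{\widetilde{+}}| \geq |B_k^{\widetilde{+}}|$ (in the paper's notation, $\sum_j |(\mathbf{P}B_j)_k^{\widetilde{+}}| \geq |B_k^{\widetilde{+}}|$). The divergence is in how this inequality is proved. The paper runs a \emph{second} induction on $|B|$: it removes an element $b_0\in B$ with maximal last coordinate, applies the induction hypothesis to $B'=B\setminus\{b_0\}$, and then compares $(\mathbf{P}B_j)_k^{\widetilde{+}}$ to $(\mathbf{P}B_j')_k^{\widetilde{+}}$ slice by slice. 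Your Fubini maneuver, rewriting both sides as sums over $\tilde a\in\mathcal{C}_k^{d-1}$ and reducing each summand to $|\bigcap_{b\in B^{\tilde a}} R_{d;b}|\le \min_{b\in B^{\tilde a}} b_d$, eliminates that second induction entirely and makes the argument a single induction on $d$. Your observation that the naive layerwise strengthening fails is also well taken; it explains why the paper was pushed toward the extra induction, whereas your change of summation variable sidesteps the issue.
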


\begin{rem}
We have a similar remark to Remark \ref{remarkonsharpness} showing that Lemma \ref{comblemonCk} is also sharp. Taking a previous example when $d =2$, $B = \{(1, 1), (3, 0)\} \in \mathcal{C}_4$ colored red as the following:
\begin{ytableau}
(0, 0) & (1, 0) & (2, 0) &*(red) (3, 0) & (4, 0)\\
(0, 1) &*(red) (1, 1) & (2, 1) & (3, 1) & (4, 1)\\
(0, 2) & (1, 2) & (2, 2) & (3, 2) & (4, 2)\\
(0, 3) & (1, 3) & (2, 3) & (3, 3) & (4, 3)\\
(0, 4) & (1, 4) & (2, 4) & (3, 4) & (4, 4)
\end{ytableau}, then it is possible to take $A$ to be the blue colored subset in $\mathcal{C}_4$ shown in the following diagram (where we have already colored all pairs in $B_4^{\widetilde{+}}$ to be red):
\begin{ytableau}
*(blue) (0, 0) &*(blue) (1, 0) &*(blue) (2, 0) &*(red) (3, 0) &*(red) (4, 0)\\
*(blue) (0, 1) &*(red) (1, 1) &*(red) (2, 1) &*(red) (3, 1) &*(red) (4, 1)\\
*(blue) (0, 2) &*(red) (1, 2) &*(red) (2, 2) &*(red) (3, 2) &*(red) (4, 2)\\
*(blue) (0, 3) &*(red) (1, 3) &*(red) (2, 3) &*(red) (3, 3) &*(red) (4, 3)\\
*(blue) (0, 4) &*(red) (1, 4) &*(red) (2, 4) &*(red) (3, 4) &*(red) (4, 4)\\
\end{ytableau}.
\end{rem}

\begin{proof}[Proof that Lemma \ref{comblemonCk} implies Lemma \ref{comblem}]Assuming Lemma \ref{comblemonCk} holds. We prove Lemma \ref{comblem}.

Take $A$ in Lemma \ref{comblemonCk} to be the $A$ we have in Lemma \ref{comblem}. We would like to enlarge $B$ and apply Lemma \ref{comblemonCk}. To achieve this we exploit the constraint $A \subseteq \mathcal{S}_k \bigcup \{(0, 0, \ldots, 0)\}$.

We add all the elements $b=(b_1, \ldots, b_d)\in \mathcal{C}_k$ s.t. $|b| > k$ into the set $B$ in Lemma \ref{comblem} and form a new set $\widetilde{B}$. We next check that we can apply Lemma \ref{comblemonCk} to $A$ and $\widetilde{B}$. Since $A \subseteq \mathcal{S}_k \bigcup \{(0, 0, \ldots, 0)\} \subseteq \mathcal{C}_k$, it suffices to check the assumption of Lemma \ref{comblemonCk} for any newly added $b \in \widetilde{B}$. Such $b$ satisfies $|b| > k > |a|$ for any $a \in A$. Hence just like we have noticed in Remark \ref{remarkonsharpness}, one of the inequalities $0 \leq a_i < b_i$ has to hold. Therefore taking all $R_{i;*;b} = [b_i - 1]$ suffices.

Apply Lemma \ref{comblemonCk} to $A$ and $\widetilde{B}$, we deduce
\begin{equation}
|A| \leq (d+1)^k - |\widetilde{B}_k^{\widetilde{+}}|.
\end{equation}

We now determine the elements of $\widetilde{B}_k^{\widetilde{+}}$ by definition of ${\cdot}_k^{\widetilde{+}}$. For any $b' \in \mathcal{C}_k \setminus (\mathcal{S}_k \bigcup (0, 0, \ldots, 0))$, $|b'| > k$. Hence $b' \in \widetilde{B}$. By $b' \preceq b'$ we have $b' \in \widetilde{B}_k^{\widetilde{+}}$. For any $b' \in \mathcal{S}_k \bigcup (0, 0, \ldots, 0)$, $|b'| \leq k$. Hence if $b'' \prec b'$ holds for some $b'' \in \widetilde{B}$ then $|b''| \leq k$ hence $b'' \in B$. Therefore such $b' \in \widetilde{B}_k^{\widetilde{+}}$ if and only if $b' \in B_k^+$. As a conclusion, $\widetilde{B}_k^{\widetilde{+}} = B_k^+ \bigsqcup (\mathcal{C}_k \setminus (\mathcal{S}_k \bigcup (0, 0, \ldots, 0)))$.

Hence
\begin{eqnarray}
|A| \leq & (k+1)^d - |B_k^{\widetilde{+}}|\nonumber\\
= & (k+1)^d - |\mathcal{C}_k \setminus (\mathcal{S}_k \bigcup (0, 0, \ldots, 0))| - |B_k^+|\nonumber\\
= & (k+1)^d - |(k+1)^d - n_d(k) - 1| - |B_k^+|\nonumber\\
= & n_d(k) + 1 - |B_k^+|
\end{eqnarray}
and (\ref{geomineq}) was proved.
\end{proof}

\begin{proof}[Proof of Lemma \ref{comblemonCk}]
We prove (\ref{geomineqCk}) by induction on $d$ and then on $|B|$. For convenience, we denote the ``deficient function'' $\Delta(d, B) = \min |\mathcal{C}_k \setminus A| = \min ((k+1)^d - |A|)$ where the minimum is taken over all $A$ s.t. $(A, B)$ satisfies the assumption of Lemma \ref{comblemonCk}. Then (\ref{geomineqCk}) is equivalent to the statement
\begin{equation}\label{geomineq2}
\Delta(d, B) \geq |B_k^{\widetilde{+}}|.
\end{equation}

We verify the induction basis. When $d=1$ the assumption requires all $a \in A$ in a set of cardinality $b$ for any $b \in B$. Hence $|A| \leq \min_{b \in B} b = |[b-1]| = |\mathcal{C}_k \setminus B_k^{\widetilde{+}}| = k+1 - |B_k^{\widetilde{+}}|$ and the conclusion holds. When $|B| = 1$, we count the number of possibilities of $a \in \mathcal{C}_k \setminus A$. By assumption, the last component of such an $a$ can take $(k+1 - b_d)$ different possible values, and after fixing it, the second last component can take $(k+1 - b_{d-1})$ different possible values, $\ldots$, finally the first component can take $(k+1 - b_1)$ different possible values to ensure $a \notin A$. Hence $|A| \leq |\mathcal{C}_k| - \prod_{i=1}^d (k+1 - b_i) = (k+1)^d - |B_k^{\widetilde{+}}|$. The conclusion also holds in this case.

From now on we assume that (\ref{geomineqCk}) and hence (\ref{geomineq2}) hold for all dimension $d' < d$ and $|B'|< |B|$ in the dimension $d$ case. We can assume $d > 1$ and $|B|> 1$.

For $j \in [k]$, call the set $\mathcal{U}_j = \{(\cdot, \cdot, \ldots, \cdot, j)\} \subseteq \mathcal{C}_k = \mathcal{C}_k^d$ to be the $j$-th slice of $\mathcal{C}_k = \mathcal{C}_k^d$. It is isomorphic to $\mathcal{C}_{k}^{d-1}$. For any subset $B_1 \in B$, define the projection $\mathbf{P}B_1 = \{(b_1, \ldots, b_{d-1}): \exists b_d \text{ s.t. } (b_1, \ldots, b_d) \in B_1\} \subseteq \mathcal{C}_k^{d-1}$. We have $|\mathbf{P}B_1| \leq |B_1|$.

We choose an element $b_0 = (b_{0, 1}, \ldots, b_{0, d}) \in B$ such that $b_{0, d}$ is the largest possible. Let $B' = B \setminus \{b_0\}$. We deal with the problem ``slicewisely''. For each $j \in [k]$, define $B_j \subseteq B$ to be the subset $\{b \in B: j \notin R_{d; b}\}$. We similarly define $B_j '$ from $B'$. By definition, $A \bigcap \mathcal{U}_j \subseteq \mathcal{U}_j \simeq \mathcal{C}_k^{d-1}$ and $\mathbf{P}B_j$ have to satisfy the assumption of Lemma \ref{comblemonCk} in dimension $d-1$ for each $j \in [k]$. Moreover, if some $A\subseteq \mathcal{C}_k^{d}$ such that $A \bigcap \mathcal{U}_j$ and $\mathbf{P}B_j$ satisfy the assumption of Lemma \ref{comblemonCk} in dimension $d-1$, then $A$ satisfies the assumption of Lemma \ref{comblemonCk}.

We learn from the last paragraph by the induction hypothesis for $d-1$ that
\begin{equation}\label{slicewiceineq}
\Delta(d-1, A \bigcap \mathcal{U}_j) \geq |(\mathbf{P}B_j)_k^{\widetilde{+}}|.
\end{equation}

But from the analysis we have done and the earlier remark similar to Remark \ref{remarkonsharpness}, it is possible to construct $A$ slicewisely for each (\ref{slicewiceineq}) to actually take equality. Such an $A$ would have $\Delta(d, A) = \sum_{j=0}^d |(\mathbf{P}B_j)_k^{\widetilde{+}}|$. Replace $B$ by $B'$ and run the entire set of reasoning, we get a set $A'$ such that $A'$ and $B'$ satisfy the assumption of Lemma \ref{comblemonCk} and
\begin{equation}\label{Aprimeineq}
\Delta(d, A') = \sum_{j=0}^d |(\mathbf{P}B_j ')_k^{\widetilde{+}}|
\end{equation}

Note that we have the induction hypothesis for $B'$, we deduce from (\ref{Aprimeineq}) that
\begin{equation}\label{Bprimeslicewice}
\sum_{j=0}^d |(\mathbf{P}B_j ')_k^{\widetilde{+}}| \geq |(B')_k^{\widetilde{+}}|.
\end{equation}

Note that $j \in R_{d; b_0}$ except for $(d+1 -b_{0, d})$ different values of $j$. Hence for exactly $b_{0, d}$ different values of $j$ we have $B_j = B_j '$. For all other $j$ we have $B_j = B_j ' \bigcup \{b_0\}$. When $B_j = B_j '$ we surely have $(\mathbf{P}B_j)_k^{\widetilde{+}} = (\mathbf{P}B_j ')_k^{\widetilde{+}}$. When $B_j = B_j ' \bigcup \{b_0\}$ we have
\begin{equation}
(\mathbf{P}B_j)_k^{\widetilde{+}} = (\mathbf{P}B_j ')_k^{\widetilde{+}} \bigcup (\mathbf{P}\{b_0\})_k^{\widetilde{+}}
= (\mathbf{P}B_j ')_k^{\widetilde{+}} \bigsqcup ((\mathbf{P}\{b_0\})_k^{\widetilde{+}}\setminus (\mathbf{P}B_j ')_k^{\widetilde{+}}).
\end{equation}

Hence in such cases
\begin{equation}\label{localineqforBjprime}
|(\mathbf{P}B_j)_k^{\widetilde{+}}| = |(\mathbf{P}B_j ')_k^{\widetilde{+}}|+|((\mathbf{P}\{b_0\})_k^{\widetilde{+}}\setminus (\mathbf{P}B_j ')_k^{\widetilde{+}})| \geq |(\mathbf{P}B_j ')_k^{\widetilde{+}}|+|((\mathbf{P}\{b_0\})_k^{\widetilde{+}}\setminus (\mathbf{P}B')_k^{\widetilde{+}})|.
\end{equation}

Sum (\ref{localineqforBjprime}) over all $j$ such that $B_j = B_j ' \bigcup \{b_0\}$, and invoke (\ref{Bprimeslicewice}) and (\ref{slicewiceineq}). Using the fact
\begin{equation}\label{Aprimeineq}
\Delta(d, A) = \sum_{j=0}^d \Delta(d-1, A \bigcap \mathcal{U}_j),
\end{equation}
we deduce
\begin{equation}\label{semifinalforBkplus}
\Delta(d, A) \geq |(B')_k^{\widetilde{+}}| + (d+1 -b_{0, d})|((\mathbf{P}\{b_0\})_k^{\widetilde{+}}\setminus (\mathbf{P}B')_k^{\widetilde{+}})|
\end{equation}

Since $b_{0, d}$ is the largest possible, we feel comfortable comparing $(B')_k^{\widetilde{+}}$ and $B_k^{\widetilde{+}}$. When we look slicewicely on each $\mathcal{U}_j$, we find that on the first $b_{0, d}$ slices the two set coincide. While on the last $(d+1 -b_{0, d})$ slices the second set is exactly equal to the union of the first set and $((\mathbf{P}\{b_0\})_k^{\widetilde{+}}\setminus (\mathbf{P}B')_k^{\widetilde{+}})$. Hence the right hand side of (\ref{semifinalforBkplus}) is exactly $|B_k^{\widetilde{+}}|$. We have proved (\ref{geomineq2}) and hence (\ref{geomineqCk}) for $d$ and $B$, thus closing the induction.
\end{proof}

We will naturally have such an $A$ as in Lemma \ref{comblem} arise from the proof of Theorem \ref{BLcond} (or the equivalent Theorem \ref{linear-algebra}) in the end of this section. We have done a great job understanding its size by the powerful Lemma \ref{comblem}. The expression contains $B_k^+$. We study it in the next lemma and prove a key inequality.

\begin{lem}\label{Bfracineqlem}
Assuming $l < k$ are positive integers. For any nonempty $B \subseteq \mathcal{S}_l$, as long as $B_l^+ \neq S_l$, we have
\begin{equation}\label{fracBissmall}
\frac{|B_l^+|}{|B_k^+|} < \frac{|\mathcal{S}_l|}{|\mathcal{S}_k|} = \frac{n_d(l)}{n_d(k)}.
\end{equation}
\end{lem}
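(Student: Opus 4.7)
The plan is to prove a levelwise density monotonicity statement and then average. For $1 \le j \le k$, set $L_j := B_k^+ \cap \{a : |a|=j\}$ and $\alpha_j := |L_j|/\binom{d+j-1}{d-1}$, the density of $B_k^+$ within level $j$ of $\mathbb{Z}_{\ge 0}^d$. Since $B \subseteq \mathcal{S}_l$, both $|B_l^+|/n_d(l)$ and $|B_k^+|/n_d(k)$ are weighted averages of the $\alpha_j$'s (with weights $\binom{d+j-1}{d-1}$) over $j \in [1,l]$ and $j \in [1,k]$ respectively. It therefore suffices to establish (i) that $(\alpha_j)$ is non-decreasing, and (ii) that $(\alpha_j)$ is not constant on $[1,k]$ under the lemma's hypotheses.

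For (i), let $U$ be the upward closure of $B$ in $\mathbb{Z}_{\ge 0}^d$ (so $L_j = U \cap \{|a|=j\}$), and for each $i \in \{1,\dots,d\}$ and $m \ge 0$ introduce the shifted set $U^{(i,m)} := \{a : a + me_i \in U\}$. Upward-closedness of $U$ yields the nested chain $U = U^{(i,0)} \subseteq U^{(i,1)} \subseteq U^{(i,2)} \subseteq \cdots$. Writing $L^{(i,m)}_n := U^{(i,m)} \cap \{|a|=n\}$, the bijection $a \leftrightarrow a - me_i$ shows $|\{a \in L_j : a_i \ge m\}| = |L^{(i,m)}_{j-m}|$, hence $\sum_{a \in L_j} a_i = \sum_{m \ge 1}|L^{(i,m)}_{j-m}|$. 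Summing over $i$ (using $\sum_i a_i = j$ on $L_j$) yields $j|L_j| = \sum_i \sum_{m \ge 1}|L^{(i,m)}_{j-m}|$; combined with the analogous computation at level $j+1$ and the trivial identity $d|L_j| = \sum_i|L^{(i,0)}_j|$, I expect to obtain
\[
(j+1)|L_{j+1}| - (d+j)|L_j| \;=\; \sum_{i=1}^d\sum_{m \ge 0}\bigl(|L^{(i,m+1)}_{j-m}| - |L^{(i,m)}_{j-m}|\bigr) \;\ge\; 0,
\]
which is exactly the desired $\alpha_j \le \alpha_{j+1}$, termwise nonnegative by the chain of inclusions.

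For (ii), the case $\alpha_j \equiv 0$ forces $B = \emptyset$ and $\alpha_j \equiv 1$ forces $B_l^+ = \mathcal{S}_l$, both excluded by hypothesis. Any intermediate constant $c = r/d$ with $0 < r < d$ would require $L_1$ to consist of exactly $r$ of the basis vectors $e_1,\dots,e_d$; but then the upward closure of just those $r$ vectors already contributes $\binom{d+1}{2} - \binom{d-r+1}{2} = r(2d-r+1)/2$ elements to level $2$, which strictly exceeds $c\binom{d+1}{2} = r(d+1)/2$ since $r < d$, contradicting $\alpha_2 = c$. Non-constancy, combined with the monotonicity from (i), upgrades the non-strict weighted-average comparison to the strict inequality demanded by the lemma. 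The main obstacle is the double-counting identity in step (i); steps (ii) and the final averaging are essentially bookkeeping.
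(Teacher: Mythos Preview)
Your proof is correct. The double-counting identity in step~(i) checks out: with $U^{(i,m)}\subseteq U^{(i,m+1)}$ following from upward-closure, the telescoping sum $(j+1)|L_{j+1}|-(d+j)|L_j|=\sum_i\sum_{m\ge 0}(|L^{(i,m+1)}_{j-m}|-|L^{(i,m)}_{j-m}|)\ge 0$ is exactly $\alpha_j\le\alpha_{j+1}$. Step~(ii) is fine once one notes $L_1\subseteq B$ (any $b\in B$ with $b\preceq e_i$ forces $b=e_i$), so the level-$2$ lower bound from the upward closure of $L_1$ is legitimate. The final weighted-average comparison is routine: the average of the nondecreasing sequence $(\alpha_j)$ over $[l+1,k]$ is at least $\alpha_l$, which is at least the average over $[1,l]$, with strictness forced by non-constancy.

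The paper reaches the same levelwise monotonicity $|B_{m-1}^\uparrow|\le\frac{|\mathcal{V}_{m-1}|}{|\mathcal{V}_m}|B_m^\uparrow|$ by a different and more general route: it introduces a predecessor operation $T^-$ on arbitrary subsets $T\subseteq\mathcal{V}_m$, proves the Kruskal--Katona-type bound $|T^-|\le\frac{|\mathcal{V}_{m-1}|}{|\mathcal{V}_m}|T|$ by induction on the dimension $d$ (with a careful slicewise analysis and a convexity inequality for the simplex numbers), and only then specializes to $T=B_m^\uparrow$ via the inclusion $B_{m-1}^\uparrow\subseteq(B_m^\uparrow)^-$. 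Your argument bypasses the general predecessor inequality entirely by exploiting the upward-closedness of $U$ directly through the shift sets $U^{(i,m)}$, which yields a cleaner one-shot proof without any induction on $d$. The paper's approach has the advantage of proving a standalone combinatorial inequality (with sharp equality characterization) that may be of independent interest; yours is shorter and tailored to exactly what the lemma needs. The equality analysis also differs: the paper tracks equality through its dimensional induction, while your level-$1$/level-$2$ comparison dispatches the constant case by hand.
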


\begin{proof}
We do some preliminary reductions. First we claim that to prove (\ref{fracBissmall}) it suffices to do the case $k = l+1$, i.e. proving
\begin{equation}\label{fracBissmallconsec}
\frac{|B_l^+|}{|B_{l+1}^+|} < \frac{|\mathcal{S}_l|}{|\mathcal{S}_{l+1}|} = \frac{n_d(l)}{n_d(l+1)}
\end{equation}
when $B_l^+ \neq S_l$.

In fact once we have (\ref{fracBissmallconsec}) we always have $\frac{|B_l^+|}{|B_{l+1}^+|} \leq \frac{|\mathcal{S}_l|}{|\mathcal{S}_{l+1}|}$ and the equality holds only when $B_l^+ = S_l$. Similarly we have $\frac{|B_{l+1}^+|}{|B_{l+2}^+|} \leq \frac{|\mathcal{S}_{l+1}|}{|\mathcal{S}_{l+2}|}$, $\ldots$, $\frac{|B_{k-1}^+|}{|B_k^+|} \leq \frac{|\mathcal{S}_{k-1}|}{|\mathcal{S}_k|}$. Taking the product of everything above we have (\ref{fracBissmall}).

In the rest of the proof we prove (\ref{fracBissmallconsec}). In other words, the global maximal of $\frac{|B_l^+|}{|B_{l+1}^+|}$ over all possible $\emptyset \subsetneqq B \subseteq \mathcal{S}_l$ is achieved if and only if $B_l^+ = \mathcal{S}_l$.

It seems hard to control $|B_l^+|$ or $|B_{l+1}^+|$. But we notice that the difference of the two is a simpler object (a ``layer'' in $|B_{l+1}^+|$). Moreover, we find that $B_l^+$ can be decomposed into $l$ layers of such form. This inspires us to do the following decomposition: For any positive integer $m$, define $B_m^{\uparrow}$ to be $B_k^+ \bigcap \{b: |b| = m\}$ for any sufficiently large $k$. We can alternatively use the straightforward definition
\begin{equation}
B_m^{\uparrow} = \{b \in \mathbb{Z}_{\geq 0}^d: |b| = m, \exists b' \in B \text{ s.t. } b'\preceq b\}.
\end{equation}

Hence $B_l^+ = \bigsqcup_{m=1}^l B_m^{\uparrow}$, $B_{l+1}^+ = \bigsqcup_{m=1}^{l+1} B_m^{\uparrow}$. $|B_l^+| = \sum_{m=1}^l |B_m^{\uparrow}|$, $B_{l+1}^+ = \sum_{m=1}^{l+1} |B_m^{\uparrow}|$.

Next we explore the relationship between $B_m^{\uparrow}$'s.

For $m \geq 1$ define $\mathcal{V}_m = \mathcal{V}_m^{d-1} = \{b \in \mathbb{Z}_{\geq 0}^d, |b| = m\}$. We use the superscript $d-1$ to emphasize that $\mathcal{V}_m^{d-1}$ is a $(d-1)$-dimensional object. As before when there is no ambiguity about the dimension we suppress this superscript.

For example, $\mathcal{V}_4^1$ is the following set in green:
\begin{ytableau}
(0, 0) & (1, 0) & (2, 0) & (3, 0) &*(green) (4, 0)\\
(0, 1) & (1, 1) & (2, 1) &*(green) (3, 1)\\
(0, 2) & (1, 2) &*(green) (2, 2)\\
(0, 3) &*(green) (1, 3)\\
*(green) (0, 4)
\end{ytableau}

For $m > 1$ and any set $T \subseteq \mathcal{V}_m$ we define its predecessor $T^{-} = \{b = (b_1, \ldots, b_d) \in \mathcal{V}_{m-1}: (b_1, \ldots, b_{i-1}, b_i + 1, b_{i+1}, \ldots, b_d) \in T, \forall 1 \leq i \leq d\}$.

For example if $d = 2$ and $T \subseteq \mathcal{V}_4^1$ is the following set in green, then $T^- \in \mathcal{V}_3^1$ is the set in yellow:
\begin{ytableau}
(0, 0) & (1, 0) & (2, 0) &*(yellow) (3, 0) &*(green) (4, 0)\\
(0, 1) & (1, 1) &*(yellow) (2, 1) &*(green) (3, 1)\\
(0, 2) & (1, 2) &*(green) (2, 2)\\
(0, 3) & (1, 3)\\
*(green) (0, 4)
\end{ytableau}

We claim: for any $m > 1$,
\begin{equation}\label{predecessorineq}
B_{m-1}^{\uparrow} \subseteq (B_m^{\uparrow})^-.
\end{equation}

In fact assuming $b =(b_1, \ldots, b_d) \in B_{m-1}^{\uparrow}$. Then there exists $B \ni b' \preceq b$. Hence $b' \preceq b \preceq (b_1, \ldots, b_{i-1}, b_i + 1, b_{i+1}, \ldots, b_d)$ for all $1\leq i \leq d$. Thus $(b_1, \ldots, b_{i-1}, b_i + 1, b_{i+1}, \ldots, b_d) \in B_m^{\uparrow}$ for all $1\leq i \leq d$. By definition (\ref{predecessorineq}) holds.

We will prove the following inequality:
\begin{equation}\label{reductiontolayers}
|T^-| \leq \frac{|\mathcal{V}_{m-1}^{d-1}|}{|\mathcal{V}_m^{d-1}|} |T|, \forall m>1, T \subseteq \mathcal{V}_m^{d-1}
\end{equation}
and prove that equality holds only when $T = \mathcal{V}_m^{d-1}$ or $T = \emptyset$.

We claim that (\ref{reductiontolayers}) along with its equality condition together imply (\ref{fracBissmallconsec}). Assuming we already have (\ref{reductiontolayers}) and know that equality holds only when $T = \mathcal{V}_m$ for all $m > 1$. We now prove (\ref{fracBissmallconsec}). In fact by (\ref{reductiontolayers}) and (\ref{predecessorineq}) we inductively deduce that
\begin{eqnarray}
|B_l^{\uparrow}| \leq &\frac{|\mathcal{V}_{l}|}{|\mathcal{V}_{l+1}|} |B_{l+1}^{\uparrow}|,\\
|B_{l-1}^{\uparrow}| \leq \frac{|\mathcal{V}_{l-1}|}{|\mathcal{V}_{l}|} |B_l^{\uparrow}| \leq \frac{|\mathcal{V}_{l-1}|}{|\mathcal{V}_{l}|} \cdot \frac{|\mathcal{V}_{l}|}{|\mathcal{V}_{l+1}|} |B_{l+1}^{\uparrow}| = & \frac{|\mathcal{V}_{l-1}|}{|\mathcal{V}_{l+1}|} |B_{l+1}^{\uparrow}|,\\
\ldots &,\\
|B_1^{\uparrow}| \leq & \frac{|\mathcal{V}_1|}{|\mathcal{V}_{l+1}|} |B_{l+1}^{\uparrow}|.
\end{eqnarray}

Summing over all the inequalities above we deduce
\begin{equation}
|B_l^+| = \sum_{m=1}^l |B_m^{\uparrow}| \leq \frac{\sum_{m=1}^l |\mathcal{V}_m|}{|\mathcal{V}_{l+1}|} |B_{l+1}^{\uparrow}| = \frac{|\mathcal{S}_l|}{|\mathcal{V}_{l+1}|} |B_{l+1}^{\uparrow}| = \frac{|\mathcal{S}_l|}{|\mathcal{S}_{l+1}| - |\mathcal{S}_l|} (|B_{l+1}^+| - |B_{l}^+|)
\end{equation}

Elementary manipulations show that this is equivalent to (\ref{fracBissmallconsec}) when we replace ``$<$'' by ``$\leq$'' there. However when equality holds, all the above equalities involving (\ref{reductiontolayers}) and (\ref{predecessorineq}) have to hold. Note that $B \neq \emptyset$ hence $B_{l+1}^{\uparrow} \neq \emptyset$. By the equality condition of (\ref{reductiontolayers}), we inductively see that each $B_m^{\uparrow}$ has to be the whole $\mathcal{V}_m$ for $m = l, l-1, \ldots, 1$. Hence $B_l^+ = \mathcal{S}_l$. A contradiction with the assumption we have for (\ref{fracBissmallconsec}). Hence (\ref{fracBissmallconsec}) holds (without equality).

Therefore it suffices to prove (\ref{reductiontolayers}) and prove that its equality holds only when $T = \emptyset$ or $\mathcal{V}_m^{d-1}$. We do this in the rest of the proof.

We perform an induction on dimension $d$. When $d = 1$, each $\mathcal{V}_m^0$ is just one point. In this case $T^- = \emptyset$ when $T = \emptyset$, and $T^- = \mathcal{V}_{m-1}^{0}$ when $T = \mathcal{V}_m^{0}$. (\ref{reductiontolayers}) holds in either case.

Assuming we already proved (\ref{reductiontolayers}) for all dimensions $<d$ with full knowledge on the possible situations when we have equality ($T$ being a trivial subset). We now handle the dimension $d (\geq 2)$ case.

We further decompose $\mathcal{V}_m^{d-1}$ into $(m+1)$ pieces $\mathcal{W}_{j, m}^{d-2}, 0\leq j \leq m$ as the following (again its superscript $d-2$ is so chosen to emphasize the dimension):
\begin{eqnarray}\label{furtherdecomposition}
\mathcal{W}_{j, m}^{d-2} = & \{b = (b_1, \ldots, b_{d})\in\mathbb{Z}_{\geq 0}^d: |b|= m, b_d = m-j\}\nonumber\\
= & \{b = (b_1, \ldots, b_{d-1}, m-j)\in\mathbb{Z}_{\geq 0}^d: |(b_1, \ldots, b_{d-1})| = j\}\nonumber\\
\simeq & \mathcal{V}_j^{d-2}.
\end{eqnarray}

We denote $\sigma_d$ to be the isomorphism map shown in the last line of (\ref{furtherdecomposition}). Hence $\mathcal{W}_{j, m}^{d-2} \stackrel{\sigma_d}{\simeq} \mathcal{V}_j^{d-2}$. It is realized simply by removing the last component.

We assume accordingly that $T = \bigsqcup_{j=0}^m T_j$ where $T_j = T \bigcap \mathcal{W}_{j, m}^{d-2}$ and $T^- = \bigsqcup_{j=0}^{m-1} (T^-)_j$ where $(T^-)_j = T^- \bigcap \mathcal{W}_{j, m-1}^{d-2}$. In this way we can study each $(T^-)_j$ separately.

For a fixed $0 \leq j \leq m-1$ and any $(d-1)$-dimensional vector $b^* = (b_1, \ldots, b_{d-1}) \in \mathcal{V}_j^{d-2}$, by definition of $T^-$ we see that $(b^*, m-1-j) \in T^- \Leftrightarrow (b^*, m-1-j) \in (T^-)_j$ holds if and only if the following two conditions both hold:

(i) $(b^*, m-j) \in T$, i.e. $(b^*, m-j) \in T_j$. Alternatively we can say $b^* \in \sigma_d(T_j)$;

(ii) $b^* \in (\sigma_d(T_{j+1}))^-$. Note that here $(\cdot)^-$ is a map from $\mathcal{V}_{j+1}^{d-2}$ to $\mathcal{V}_j^{d-2}$.

By (i) we have
\begin{equation}\label{keystep1inVjestimate}
|(T^-)_j| \leq |T_j|
\end{equation}
where equality holds only when $\sigma_d ((T^-)_j) = \sigma_d (T_j)$.

By (ii) and the induction hypothesis we have
\begin{equation}\label{keystep2inVjestimate}
|(T^-)_j| \leq \frac{|\mathcal{V}_j^{d-2}|}{|\mathcal{V}_{j+1}^{d-2}|} |T_{j+1}|
\end{equation}
where equality only holds when both (a) $\sigma_d ((T^-)_j) = (\sigma_d(T_{j+1}))^-$ and (b) $T_{j+1} = \emptyset$ or $T_{j+1} = \mathcal{W}_{j+1, m}^{d-2}$.

We now have the two key inequalities (\ref{keystep1inVjestimate}) and (\ref{keystep2inVjestimate}). We need a bit of numerical preparation before proving (\ref{reductiontolayers}) with the above two inequalities.

For $d \geq 1$ and $q \geq 0$, define $\Lambda_{q, d-1} = |\mathcal{V}_q^{d-1}|$. These are $(d-1)$-dimensional generalizations of triangular numbers. As before for $d>1$ we know $\mathcal{V}_q^{d-1} = \bigsqcup_{j=0}^m \mathcal{W}_{j, q}^{d-2}$ and each $\mathcal{W}_{j, q}^{d-2} \simeq \mathcal{V}_j^{d-2}$. Hence

\begin{equation}\label{sumstructureofLambda}
\Lambda_{q, d-1} = \sum_{j=0}^q \Lambda_{j, d-2}, \forall d > 1, q \geq 0.
\end{equation}

We prove that for any $d\geq 1, q\geq 0$,
\begin{equation}\label{convexineq}
\frac{\sum_{j=0}^q \Lambda_{j, d-1}}{\Lambda_{q+1, d-1}} < \frac{\sum_{j=0}^{q+1} \Lambda_{j, d-1}}{\Lambda_{q+2, d-1}}
\end{equation}
or equivalently
\begin{equation}\label{convexineqeqsier}
\frac{\sum_{j=0}^q \Lambda_{j, d-1}}{\sum_{j=0}^{q+1} \Lambda_{j, d-1}} < \frac{\sum_{j=0}^{q+1} \Lambda_{j, d-1}}{\sum_{j=0}^{q+2} \Lambda_{j, d-1}}.
\end{equation}

We prove (\ref{convexineqeqsier}) by induction on $d$. For $d=1$, (\ref{convexineqeqsier}) becomes $\frac{q+1}{q+2} < \frac{q+2}{q+3}$ which is trivially true. We assume (\ref{convexineqeqsier}) is true for $d < d_0$ and prove it for $d = d_0 > 1$. (\ref{convexineqeqsier}) is equivalent to
\begin{equation}
\frac{\sum_{j=0}^q \Lambda_{j, d_0-1}}{\sum_{j=0}^{q+1} \Lambda_{j, d_0-1}} < \frac{\Lambda_{q+1, d_0-1}}{ \Lambda_{q+2, d_0-1}}
\end{equation}
which was further implied by
\begin{equation}\label{intermediateineq}
\frac{0}{\Lambda_{0, d_0-1}}< \frac{\Lambda_{j, d_0-1}}{\Lambda_{j+1, d_0-1}} < \frac{\Lambda_{j+1, d_0-1}}{ \Lambda_{j+2, d_0-1}}, \forall j \geq 0.
\end{equation}
But by (\ref{sumstructureofLambda}), (\ref{intermediateineq}) is equivalent to the case $d = d_0 -1$ which was already proved. This closes the induction and thus (\ref{convexineqeqsier}) (and the equivalent (\ref{convexineq})) holds.

By (\ref{convexineq}), we have for each $0 \leq j \leq m-1$, $\sum_{q=0}^{m-1} \Lambda_{q, d-2} - \frac{\sum_{p=0}^{j-1} \Lambda_{p, d-2}}{\Lambda_{j, d-2}} \Lambda_{m, d-2} > 0$. With this in mind, by (\ref{keystep1inVjestimate}) and (\ref{keystep2inVjestimate}) we have
\begin{eqnarray}\label{finalstep}
|T^-| = & \sum_{j=0}^{m-1} |(T^-)_j|\nonumber\\
= &\sum_{j=0}^{m-1} (\frac{\sum_{q=0}^{m-1} \Lambda_{q, d-2} - \frac{\sum_{p=0}^{j-1} \Lambda_{p, d-2}}{\Lambda_{j, d-2}} \Lambda_{m, d-2}}{\sum_{q=0}^m \Lambda_{q, d-2}}|(T^-)_j| + \frac{\frac{\sum_{p=0}^{j} \Lambda_{p, d-2}}{\Lambda_{j, d-2}} \Lambda_{m, d-2}}{\sum_{q=0}^m \Lambda_{q, d-2}}|(T^-)_j|)\nonumber\\
\leq & \sum_{j=0}^{m-1} \frac{\sum_{q=0}^{m-1} \Lambda_{q, d-2} - \frac{\sum_{p=0}^{j-1} \Lambda_{p, d-2}}{\Lambda_{j, d-2}} \Lambda_{m, d-2}}{\sum_{q=0}^m \Lambda_{q, d-2}}|T_j| + \sum_{j=0}^{m-1} \frac{\frac{\sum_{p=0}^{j} \Lambda_{p, d-2}}{\Lambda_{j, d-2}} \Lambda_{m, d-2}}{\sum_{q=0}^m \Lambda_{q, d-2}}\cdot \frac{\Lambda_{j, d-2}}{\Lambda_{j+1, d-2}} |T_{j+1}|\nonumber\\
= & \sum_{j=0}^{m-1} \frac{\sum_{q=0}^{m-1} \Lambda_{q, d-2} - \frac{\sum_{p=0}^{j-1} \Lambda_{p, d-2}}{\Lambda_{j, d-2}} \Lambda_{m, d-2}}{\sum_{q=0}^m \Lambda_{q, d-2}}|T_j| + \sum_{j=1}^{m} \frac{\frac{\sum_{p=0}^{j-1} \Lambda_{p, d-2}}{\Lambda_{j, d-2}} \Lambda_{m, d-2}}{\sum_{q=0}^m \Lambda_{q, d-2}} |T_{j+1}|\nonumber\\
= & \frac{\sum_{q=0}^{m-1} \Lambda_{q, d-2}}{\sum_{q=0}^m \Lambda_{q, d-2}} \sum_{j=0}^m |T_j|\nonumber\\
= & \frac{\Lambda_{m-1, d-1}}{\Lambda_{m, d-1}} |T|.
\end{eqnarray}

Hence (\ref{reductiontolayers}) holds. Moreover, the equality there implies the equality in (\ref{finalstep}), which implies that the equalities of (\ref{keystep1inVjestimate}) and (\ref{keystep2inVjestimate}) both hold for all $0 \leq j \leq m-1$. We thus either have inductively $T_{m} = T_{m-1} = \cdots = T_0 = \emptyset$, or $T_{m} = \mathcal{W}_{m, m}^{d-2}, T_{m-1} = \mathcal{W}_{m-1, m}^{d-2}, \ldots, T_0 = \mathcal{W}_{0, m}^{d-2}$. The first case would imply $T = \emptyset$ and the second would imply $T = \mathcal{V}_{m}^{d-1}$. These together verify the desired equality condition for (\ref{reductiontolayers}).

By the arguments in this proof, (\ref{fracBissmallconsec}) and (\ref{fracBissmall}) hold.
\end{proof}

The next Theorem \ref{BLcond} is easily seen to be equivalent to Theorem \ref{linear-algebra}. We are now ready to prove it.

\begin{thm}\label{BLcond}
Let $l<k$ be positive integers. For any vector $v = (v_{(i_1, \ldots, i_d)})_{i_1, \ldots, i_d \geq 0, 1 \leq i_1 + \ldots + i_d \leq k} \in \mathbb{R}^{n_d(k)}$, define a $d$-variate polynomial $f_v = \sum_{(i_1, \ldots, i_d)} v_{(i_1, \ldots, i_d)} x_1^{i_1} x_2^{i_2} \cdots x_d^{i_d}$.

For an arbitrary nonzero subspace $V \subseteq \mathbb{R}^{n_d(k)}$ spanned by $\{v_h\}_{1 \leq h \leq H = \dim V}$, define $r(V)$ to be the rank of the following matrix over $\mathbb{R}(x_1, \ldots, x_d)$ (that is obviously independent of the choice of the basis $\{v_h\}$):
\begin{equation}
M_{d, k , l}(V) = \left( \begin{array}{cccc}
\partial_{x_1} f_{v_1} & \partial_{x_1} f_{v_2} & \cdots & \partial_{x_1} f_{v_H}\\
\partial_{x_2} f_{v_1} & \partial_{x_2} f_{v_2} & \cdots & \partial_{x_2} f_{v_H}\\
\cdots & \cdots & \cdots & \cdots \\
\partial_{x_d} f_{v_1} & \partial_{x_d} f_{v_2} & \cdots & \partial_{x_d} f_{v_H}\\
\partial_{x_1}^2 f_{v_1} & \partial_{x_1}^2 f_{v_2} & \cdots & \partial_{x_1}^2 f_{v_H}\\
\partial_{x_1} \partial_{x_2} f_{v_1} & \partial_{x_1} \partial_{x_2} f_{v_2} & \cdots & \partial_{x_1} \partial_{x_2} f_{v_H}\\
\partial_{x_1} \partial_{x_3} f_{v_1} & \partial_{x_1} \partial_{x_3} f_{v_2} & \cdots & \partial_{x_1} \partial_{x_3} f_{v_H}\\
\cdots & \cdots & \cdots & \cdots \\
\partial_{x_d}^2 f_{v_1} & \partial_{x_d}^2 f_{v_2} & \cdots & \partial_{x_d}^2 f_{v_H}\\
\cdots & \cdots & \cdots & \cdots \\
\partial_{x_1}^l f_{v_1} & \partial_{x_1}^l f_{v_2} & \cdots & \partial_{x_1}^l f_{v_H}\\
\partial_{x_1}^{l-1} \partial_{x_2} f_{v_1} & \partial_{x_1}^{l-1} \partial_{x_2} f_{v_2} & \cdots & \partial_{x_1}^{l-1} \partial_{x_2} f_{v_H}\\
\cdots & \cdots & \cdots & \cdots \\
\partial_{x_d}^l f_{v_1} & \partial_{x_d}^l f_{v_2} & \cdots & \partial_{x_d}^l f_{v_H}\\
\end{array} \right).
\end{equation}

Then when $H = \dim V < n_d(k)$, we always have
\begin{equation}\label{rankineq}
\frac{r(V)}{H} > \frac{n_d(l)}{n_d(k)}.
\end{equation}
\end{thm}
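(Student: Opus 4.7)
The plan is to prove Theorem~\ref{BLcond} by contradiction: assume $r:=r(V)\leq Hn_d(l)/n_d(k)$, and convert this rank deficiency into a combinatorial configuration that violates the pairing of Lemma~\ref{comblem} with Lemma~\ref{Bfracineqlem}. First, I would fix a graded monomial order $\prec$ on $\mathbb{R}[x_1,\dots,x_d]$ and put $V$ into echelon form, producing a basis $v_1,\dots,v_H$ for which $f_{v_h}=x^{\alpha_h}+\sum_{\beta\prec\alpha_h}c^{(h)}_\beta x^\beta$ with distinct leading monomials $\alpha_h\in\mathcal{S}_k$. Let $A=\{\alpha_1,\dots,\alpha_H\}\subset\mathcal{S}_k$, so $|A|=H$.

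Since the right kernel of $M=M_{d,k,l}(V)$ has $\mathbb{R}(x)$-dimension $H-r$, clearing denominators produces $H-r$ polynomial syzygies $\mathbf{c}^{(j)}\in\mathbb{R}[x]^H$ satisfying the polynomial identities $\sum_h c^{(j)}_h(x)\partial^\gamma f_{v_h}(x)\equiv 0$ for every $\gamma\in\mathcal{S}_l$. I would normalize these syzygies so that their leading column indices (with respect to the ordering on $A$) are all distinct, thereby obtaining a set $B\subseteq A\subseteq\mathcal{S}_k$ with $|B|=H-r\geq H(n_d(k)-n_d(l))/n_d(k)$.

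The central step is to read off from each syzygy $\mathbf{c}^{(j)}$ a family of nested forbidden sets $R_{d;b},R_{d-1;n_d;b},\dots,R_{1;n_2,\dots,n_d;b}\subseteq[k]$ of prescribed sizes $b_d,b_{d-1},\dots,b_1$, reflecting how the multi-degree of the leading coefficient $c^{(j)}_{h_j}(x)$ constrains where the identity can vanish. This is designed to place $(A,B)$ inside the hypothesis of Lemma~\ref{comblem}, which then yields $|A|\leq n_d(k)+1-|B_k^+|$, i.e., $|B_k^+|\leq n_d(k)+1-H$. Together with Lemma~\ref{Bfracineqlem}---which provides $|B_l^+|/|B_k^+|<n_d(l)/n_d(k)$ whenever $B_l^+\neq\mathcal{S}_l$, with the degenerate case $B_l^+=\mathcal{S}_l$ handled directly using $|B_l^+|=n_d(l)$---and with the lower bound $|B|\geq H(n_d(k)-n_d(l))/n_d(k)$, a short arithmetic manipulation should force $H\geq n_d(k)$, contradicting the standing hypothesis $H<n_d(k)$.

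The main obstacle is the construction of these forbidden sets $R_{i;\cdot;b}$ from the algebraic data of the syzygies. The cascade "if $a_d\notin R_{d;b}$, $a_{d-1}\notin R_{d-1;a_d;b}$, $\dots$, $a_2\notin R_{2;\cdot;b}$, then $a_1\in R_{1;\cdot;b}$" from Lemma~\ref{comblem} must faithfully encode the recursive elimination of variables that underlies polynomial long division in $d$ variables; making this correspondence precise is where the Schwartz-Zippel–type counting argument advertised in the introduction enters, relating the nested structure of the $R_{i;\cdot;b}$'s to the multi-degrees of the controlling polynomials. Ensuring that the algebraic setup produces $R$'s of exactly the prescribed cardinalities $b_i$, and not larger ones, is the genuinely combinatorial heart of the argument.
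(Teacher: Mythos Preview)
Your overall architecture is right in spirit but you have taken the wrong kernel, and this breaks both the combinatorial lemma and the final arithmetic.

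The paper does not work with the column syzygies of $M_{d,k,l}(V)$. Instead it first passes to leading terms: after choosing the basis so that the $f_{v_h}$ have distinct leading monomials $x^{a_h}$, it shows that replacing each $f_{v_h}$ by $x^{a_h}$ can only lower the rank (a leading-term comparison of subdeterminants), then multiplies the $\gamma$-th row by $x^\gamma$ and divides the $h$-th column by $x^{a_h}$ to obtain a scalar matrix $J_{d,k,l}(V)$ whose $(\gamma,h)$ entry is $a_h^\gamma$. Now the \emph{left} (row) kernel is used: a vector $w\in\mathbb{R}^{n_d(l)}$ in that kernel is exactly a polynomial $f_w$ of degree at most $l$ vanishing at all the integer points $a_h$ (and at $0$). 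The leading exponents of an echelon basis of this left kernel form $B\subseteq\mathcal{S}_l$ with $|B|=n_d(l)-r$, and the Schwartz--Zippel cascade (fix $x_d$, then $x_{d-1}$, \dots) applied to the leading term $x^{b}$ of each $f_{w_q}$ produces nested forbidden sets of sizes exactly $b_d,b_{d-1},\dots,b_1$, placing $(A\cup\{0\},B)$ squarely inside the hypothesis of Lemma~\ref{comblem}.

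Your right-kernel route loses all three pieces of this. First, your $B$ sits in $A\subseteq\mathcal{S}_k$, not in $\mathcal{S}_l$, so Lemma~\ref{Bfracineqlem} (which needs $B\subseteq\mathcal{S}_l$) is unavailable. Second, the syzygy identities $\sum_h c_h^{(j)}(x)\,\partial^\gamma f_{v_h}(x)\equiv 0$ are polynomial identities in $x$, not statements that a fixed polynomial vanishes at finitely many integer lattice points; there is no Schwartz--Zippel mechanism here to manufacture forbidden sets of the prescribed sizes $b_i$, and indeed the $b_i$ in your setup have no reason to bound anything about roots. Third, even granting the two lemmas, your arithmetic does not close: from $|B|\geq H(n_d(k)-n_d(l))/n_d(k)$, $|B_k^+|\leq n_d(k)+1-H$ and $|B_l^+|/|B_k^+|<n_d(l)/n_d(k)$ one only gets an upper bound $H<n_d(l)(1+1/n_d(k))$, which is no contradiction. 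In the paper's setup the numbers line up because $|B|=n_d(l)-r$ and $|B_k^+|\leq n_d(k)-H$, giving $(n_d(l)-r)/(n_d(k)-H)\leq n_d(l)/n_d(k)$ with strictness unless $B_l^+=\mathcal{S}_l$, which is exactly $r/H>n_d(l)/n_d(k)$; the equality case forces $B_k^+=\mathcal{S}_k$ and hence $H=0$. Switch to the left kernel of the reduced scalar matrix and all three obstacles disappear simultaneously.
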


\begin{proof}
We fix a lexicographical order on all $d$-variate monomials. We say that the monomial ${x_1}^{i_1} \cdots {x_d}^{i_d}$ is a lower order term than ${x_1}^{i_1 '} \cdots {x_d}^{i_d '}$ if and only if there is some $i_q < i_q '$ and $i_j = i_j '$ for all $1 \leq j < q$. The lexicographical order is a total order on the set of all monomials. Moreover it is well-known that if nonzero monomials $g_1$ and $g_2$ are of lower or equal order than nonzero monomials $g_1 '$ and $g_2 '$, respectively, then $g_1 g_2$ is of lower or equal order than $g_1 ' g_2 '$. The equality holds if and only if $g_i$ is a scalar multiple of $g_i '$ for both $i= 1, 2$.

We first observe that it is possible to choose a basis $\{v_h\}_{1 \leq h \leq H}$ of $V$ such that every $f_{v_h}$ has a different highest order term from every other $f_{v_{h'}}$ ($h \neq h'$). This can be done by choosing $v_1$ s.t. $f_{v_1}$ has the highest possible highest order term, then choosing $v_2$ s.t. $f_{v_2}$ has the highest order term being (a) different from the highest order term of $f_{v_1}$ and (b) of highest possible order, and then choosing $v_3, v_4, \ldots$ in a similar way. As a remark, the set of highest order terms of $\{f_{v_h}\}$ satisfying the said condition is unique, but we do not need this fact. We always take $\{v_h\}$ to be such a basis in the following discussion.

Note that all $f_{v_h}$ do not have constant terms, hence their highest order terms do not contain constant either. For a nonzero polynomial $f$ denote $\widetilde{f}$ to be its highest order term. Replace each $f_{v_h}$ by its highest order term $\widetilde{f_{v_h}}$ in the expression of $M_{d, k, l} (V)$, we obtain a matrix
\begin{equation}
N_{d, k , l}(V) = \left( \begin{array}{cccc}
\partial_{x_1} \widetilde{f_{v_1}} & \partial_{x_1} \widetilde{f_{v_2}} & \cdots & \partial_{x_1} \widetilde{f_{v_H}}\\
\cdots & \cdots & \cdots & \cdots \\
\partial_{x_d} \widetilde{f_{v_1}} & \partial_{x_d} \widetilde{f_{v_2}} & \cdots & \partial_{x_d} \widetilde{f_{v_H}}\\
\partial_{x_1}^2 \widetilde{f_{v_1}} & \partial_{x_1}^2 \widetilde{f_{v_2}} & \cdots & \partial_{x_1}^2 \widetilde{f_{v_H}}\\
\partial_{x_1} \partial_{x_2} \widetilde{f_{v_1}} & \partial_{x_1} \partial_{x_2} \widetilde{f_{v_2}} & \cdots & \partial_{x_1} \partial_{x_2} \widetilde{f_{v_H}}\\
\cdots & \cdots & \cdots & \cdots \\
\partial_{x_d}^2 \widetilde{f_{v_1}} & \partial_{x_d}^2 \widetilde{f_{v_2}} & \cdots & \partial_{x_d}^2 \widetilde{f_{v_H}}\\
\cdots & \cdots & \cdots & \cdots \\
\partial_{x_1}^l \widetilde{f_{v_1}} & \partial_{x_1}^l \widetilde{f_{v_2}} & \cdots & \partial_{x_1}^l \widetilde{f_{v_H}}\\
\cdots & \cdots & \cdots & \cdots \\
\partial_{x_d}^l \widetilde{f_{v_1}} & \partial_{x_d}^l \widetilde{f_{v_2}} & \cdots & \partial_{x_d}^l \widetilde{f_{v_H}}\\
\end{array} \right)
\end{equation}
over $\mathbb{R} (x_1, \ldots, x_d)$. Let $r_1 (V)$ be its rank. Then we claim $r_1 (V) \leq r(V)$.

In fact, let
\begin{equation}
\widetilde{M}_{d, k , l}(V) = \left( \begin{array}{cccc}
x_1\partial_{x_1} f_{v_1} & x_1\partial_{x_1} f_{v_2} & \cdots & x_1\partial_{x_1} f_{v_H}\\
\cdots & \cdots & \cdots & \cdots \\
x_d\partial_{x_d} f_{v_1} & x_d\partial_{x_d} f_{v_2} & \cdots & x_d\partial_{x_d} f_{v_H}\\
x_1^2\partial_{x_1}^2 f_{v_1} & x_1^2\partial_{x_1}^2 f_{v_2} & \cdots & x_1^2\partial_{x_1}^2 f_{v_H}\\
x_1 x_2\partial_{x_1} \partial_{x_2} f_{v_1} & x_1 x_2\partial_{x_1} \partial_{x_2} f_{v_2} & \cdots & x_1 x_2\partial_{x_1} \partial_{x_2} f_{v_H}\\
\cdots & \cdots & \cdots & \cdots \\
x_d^2\partial_{x_d}^2 f_{v_1} & x_d^2\partial_{x_d}^2 f_{v_2} & \cdots & x_d^2\partial_{x_d}^2 f_{v_H}\\
\cdots & \cdots & \cdots & \cdots \\
x_1^l\partial_{x_1}^l f_{v_1} & x_1^l\partial_{x_1}^l f_{v_2} & \cdots & x_1^l\partial_{x_1}^l f_{v_H}\\
\cdots & \cdots & \cdots & \cdots \\
x_d^l\partial_{x_d}^l f_{v_1} & x_d^l\partial_{x_d}^l f_{v_2} & \cdots & x_d^l\partial_{x_d}^l f_{v_H}\\
\end{array} \right)
\end{equation}
and
\begin{equation}
\widetilde{N}_{d, k , l}(V) = \left( \begin{array}{cccc}
x_1\partial_{x_1} \widetilde{f_{v_1}} & x_1\partial_{x_1} \widetilde{f_{v_2}} & \cdots & x_1\partial_{x_1} \widetilde{f_{v_H}}\\
\cdots & \cdots & \cdots & \cdots \\
x_d\partial_{x_d} \widetilde{f_{v_1}} & x_d\partial_{x_d} \widetilde{f_{v_2}} & \cdots & x_d\partial_{x_d} \widetilde{f_{v_H}}\\
x_1^2\partial_{x_1}^2 \widetilde{f_{v_1}} & x_1^2\partial_{x_1}^2 \widetilde{f_{v_2}} & \cdots & x_1^2\partial_{x_1}^2 \widetilde{f_{v_H}}\\
x_1 x_2\partial_{x_1} \partial_{x_2} \widetilde{f_{v_1}} & x_1 x_2\partial_{x_1} \partial_{x_2} \widetilde{f_{v_2}} & \cdots & x_1 x_2\partial_{x_1} \partial_{x_2} \widetilde{f_{v_H}}\\
\cdots & \cdots & \cdots & \cdots \\
x_d^2\partial_{x_d}^2 \widetilde{f_{v_1}} & x_d^2\partial_{x_d}^2 \widetilde{f_{v_2}} & \cdots & x_d^2\partial_{x_d}^2 \widetilde{f_{v_H}}\\
\cdots & \cdots & \cdots & \cdots \\
x_1^l\partial_{x_1}^l \widetilde{f_{v_1}} & x_1^l\partial_{x_1}^l \widetilde{f_{v_2}} & \cdots & x_1^l\partial_{x_1}^l \widetilde{f_{v_H}}\\
\cdots & \cdots & \cdots & \cdots \\
x_d^l\partial_{x_d}^l \widetilde{f_{v_1}} & x_d^l\partial_{x_d}^l \widetilde{f_{v_2}} & \cdots & x_d^l\partial_{x_d}^l \widetilde{f_{v_H}}\\
\end{array} \right).
\end{equation}

It is immediate that $r(V)$ is the rank of $\widetilde{M}_{d, k , l}(V)$ and $r_1 (V)$ is the rank of $\widetilde{N}_{d, k , l}(V)$. Note that for any $d$-variate monomial $g$ in $x_1, \ldots, x_d$ and any differential operator $\partial_{x_1}^{i_1}\cdots \partial_{x_d}^{i_d}$, $x_1^{i_1} \cdots x_d^{i_d}\partial_{x_1}^{i_1}\cdots \partial_{x_d}^{i_d} g$ is equal to a constant multiple (depending only on $i_1, \ldots, i_d$ and the powers of $x_1, \ldots, x_d$ in $g$) of $g$. We see that if we arbitrarily fix a column (say the $h$-th column) in $\widetilde{N}_{d, k , l}(V)$, then all entries in this column are the same up to a scalar. Indeed they are all scalar multiples of $\widetilde{f_{v_h}}$. For each such entry (note that it is possible to be zero), its difference from the corresponding entry of $\widetilde{M}_{d, k , l}(V)$ has to be a sum of monomials of strictly lower order than $\widetilde{f_{v_h}}$. Hence if some subdeterminant of $\widetilde{N}_{d, k, l}$ is nonzero, it will be a monomial and has to be the highest order term of the corresponding subdeterminant of $\widetilde{M}_{d, k, l}$. This implies the corresponding subdeterminant of $\widetilde{M}_{d, k, l}$ is nonzero. Hence the rank ($r_1 (V)$) of the former matrix is not more than the rank ($r(V)$) of the latter.

Therefore, it suffices to prove
\begin{equation}\label{rankineqrefined}
\frac{r_1(V)}{H} > \frac{n_d(l)}{n_d(k)}.
\end{equation}

We remark that since it is possible for all $f_{v_h}$ to be different monomials, we do not lose any information by the reduction in the last paragraph.

Without loss of generality we may assume each $\widetilde{f_{v_h}}$ is a monic monomial $x_1^{a_{1, h}}\cdots x_d^{a_{d, h}}$ for some distinct $(a_{1, h}, \ldots, a_{d, h}) \in \mathcal{S}_k$.

We use some elementary transforms that we just did on $N_{d, k, l} (V)$. We multiply the ``$(i_1, \ldots, i_d)$-th'' row of $N_{d, k, l} (V)$, i.e. $(\partial_{x_1}^{i_1} \cdots \partial_{x_d}^{i_d} \widetilde{f_{v_1}}, \ldots, \partial_{x_1}^{i_1} \cdots \partial_{x_d}^{i_d} \widetilde{f_{v_H}})$, by $x_1^{i_1}\cdots x_d^{i_d}$ and then divide the $h$-th column by $\widetilde{f_{v_h}}$. This would not change the rank of the matrix and will result in a scalar matrix $S_{d, k, l} (V)$ of the following form:
\begin{equation}
S_{d, k, l}(V) = \left( \begin{array}{ccc}
a_{1, 1} &  \cdots & a_{1, H}\\
\cdots & \cdots & \cdots \\
a_{d, 1} & \cdots & a_{d, H}\\
a_{1, 1}(a_{1, 1} -1) & \cdots & a_{1, H}(a_{1, H} -1)\\
a_{1, 1} a_{2, 1} & \cdots & a_{1, H} a_{2, H}\\
\cdots & \cdots & \cdots \\
a_{d, 1}(a_{d, 1} -1) & \cdots & a_{d, H}(a_{d, H} -1)\\
\cdots & \cdots & \cdots \\
a_{1, 1}(a_{1, 1} -1)\cdots (a_{1, 1} -l +1) & \cdots & a_{1, H}(a_{1, H} -1)\cdots (a_{1, H} -l +1)\\
\cdots & \cdots & \cdots \\
a_{d, 1}(a_{d, 1} -1)\cdots (a_{d, 1} -l +1) & \cdots & a_{d, H}(a_{d, H} -1)\cdots (a_{d, H} -l +1)\\
\end{array} \right).
\end{equation}

Using a bunch of elementary row transforms we obtain the following cleaner $J_{d, k, l}(V)$ from $S_{d, k, l}(V)$ (this step is not necessary but it is good to have things cleaner):
\begin{equation}
J_{d, k, l}(V) = \left( \begin{array}{cccc}
a_{1, 1} & a_{1, 2} & \cdots & a_{1, H}\\
\cdots & \cdots & \cdots & \cdots \\
a_{d, 1} & a_{d, 2} & \cdots & a_{d, H}\\
a_{1, 1}^2 & a_{1, 2}^2 & \cdots & a_{1, H}^2\\
a_{1, 1} a_{2, 1} & a_{1, 2} a_{2, 2} & \cdots & a_{1, H} a_{2, H}\\
\cdots & \cdots & \cdots & \cdots \\
a_{d, 1}^2 & a_{d, 2}^2 & \cdots & a_{d, H}^2\\
\cdots & \cdots & \cdots & \cdots \\
a_{1, 1}^l & a_{1, 2}^l & \cdots & a_{1, H}^l\\
\cdots & \cdots & \cdots & \cdots \\
a_{d, 1}^l & a_{d, 2}^l & \cdots & a_{d, H}^l\\
\end{array} \right).
\end{equation}

Let $r_2 (V)$ be the rank of $J_{2, k, l}(V)$. Then $r_2 (V) = r_1 (V)$. It suffices to show the following equivalent form of (\ref{rankineqrefined}):
\begin{equation}\label{rankineqrefinedrefined}
\frac{r_2(V)}{H} > \frac{n_d(l)}{n_d(k)}.
\end{equation}

Assuming $Q = n_d(l) - r_2 (V)$. We may assume $Q>0$ since otherwise by the assumption $H < n_d(k)$, (\ref{rankineqrefinedrefined}) holds. Then there is a nontrivial subspace $W \subseteq \mathbb{R}^{n_d(l)}$ such that $\dim W = Q$ and that any vector $w \in W$ is orthogonal to all columns of $J_{d, k, l}(V)$. As we did before, we can take a basis $\{w_q\}_{1\leq q \leq Q}$ such that the highest order term of $f_{w_q}$ are mutually distinct. By our assumption, we have
\begin{equation}\label{constraints}
f_{w_q} (a_{1, h}, \ldots, a_{d, h}) = 0, \forall 1 \leq q \leq Q, 1\leq h \leq H.
\end{equation}

Moreover we always have $f_{w_q} (0, 0, \ldots, 0) = 0$ since $f_{w_q}$ does not have a constant term.

We are changing our problem towards one with purely combinatorics nature in order to apply Lemma \ref{comblem}. We introduce a bit more notations. Let $\text{const} \cdot x_1^{b_{1, q}}\cdots x_d^{b_{d, q}} \neq 0$ be the highest order term of $f_{w_q}$. Hence when we fix $x_d$ to be a real number, $f_{w_q}$ is a polynomial in $x_1, \ldots, x_{d-1}$ whose highest order term is $\text{const} \cdot x_1^{b_{1, q}}\cdots x_{d-1}^{b_{d-1, q}} \neq 0$ except $\leq b_{d, q}$ possible values of $x_d$. In any non-exceptional case for the fixed $x_d$, when we further fix $x_{d-1}$ after fixing $x_d$, $f_{w_q}$ becomes a polynomial in $x_1, \ldots, x_{d-2}$ whose highest order term is $\text{const} \cdot x_1^{b_{1, q}}\cdots x_{d-2}^{b_{d-2, q}} \neq 0$ except $\leq b_{d-1, q}$ possible values of $x_{d-1}$ (which can depend on the fixed $x_d$). We can continue to do similar reasonings and finally, in any non-exceptional case for the fixed $x_2, \ldots, x_d$, $f_{w_q}$ is a polynomial in $x_1$ whose highest order term is $\text{const} \cdot x_1^{b_{1, q}} \neq 0$. Hence there are no more than $b_{1, q}$ different possible values of $x_1$ (which can depend jointly on the fixed $x_2, \ldots, x_d$) that can make $f_{w_q} (x_1, \ldots, x_d) = 0$.

We collect the constraints we got in the last paragraph for all possible $q$. Take $A = \{(a_{1, h}, \ldots, a_{d, h})\}_{1 \leq h \leq H} \bigcup \{(0, \ldots, 0)\}$ and $B = \{(b_{1, q}, \ldots, b_{d, q})\}_{1 \leq q \leq Q}$ and we are in a position to apply Lemma \ref{comblem}. (\ref{geomineq}) in Lemma \ref{comblem} implies
\begin{equation}\label{boundonGkminusH}
H+1 = |A| \leq n_d(k) + 1 -|B_k^+|.
\end{equation}

By (\ref{boundonGkminusH}) and (\ref{fracBissmall}) in Lemma \ref{Bfracineqlem}, we have
\begin{equation}\label{boundonratioQ}
\frac{Q}{n_d(k) - H} = \frac{|B|}{n_d(k) - H} \leq \frac{|B|}{|B_k^+|} \leq \frac{|B_l^+|}{|B_k^+|} \leq \frac{n_d(l)}{n_d(k)}.
\end{equation}

If the last two inequalities of (\ref{boundonratioQ}) are both actually equalities, then by Lemma \ref{Bfracineqlem} we have $B = B_l^+ = \mathcal{S}_{l}$ (note that $Q>0$ means $B \neq \emptyset$). This and (\ref{boundonGkminusH}) in turn imply $0 = H = \dim V$. A contradiction. Hence we actually have
\begin{equation}\label{boundonratioQ2}
\frac{n_d(l) - r_2 (V)}{n_d(k) - H} = \frac{Q}{n_d(k) - H} < \frac{n_d(l)}{n_d(k)}
\end{equation}
which is equivalent to (\ref{rankineqrefinedrefined}). By the discussion above we see (\ref{rankineqrefined}) and (\ref{rankineq}) hold.
\end{proof}

\vspace{1cm}

\noindent Department of Mathematics, Indiana University, Bloomington, IN 47405\\
\emph{Email address}: shaoguo@iu.edu\\

\noindent School of Mathematics, Institute for Advanced Study, Princeton, NJ 08540\\
\emph{Email address}: rzhang@math.ias.edu\\

\end{document}